\newcommand{\deff}[1]{\textbf{\emph{\sharp1}}}
\newcommand{\func}[3]{\sharp1 \colon \sharp2 \to \sharp3}
\newcommand{\bb}[1]{\mathbb{\sharp1}}
\newcommand{\lie}[1]{\mathfrak{\sharp1}}
\newcommand{\iprod}[2]{\langle \sharp1, \sharp2 \rangle}
\newcommand{\ddell}[1]{\frac{\partial}{\partial \sharp1}}
\theoremstyle{plain}
\newtheorem{theorem}{Theorem}[section]
\newtheorem{claim}[theorem]{Claim}
\newtheorem{corollary}[theorem]{Corollary}
\newtheorem{lemma}[theorem]{Lemma}
\newtheorem{proposition}[theorem]{Proposition}
\theoremstyle{definition}
\newtheorem{remark}[theorem]{Remark}
\newtheorem{definition}[theorem]{Definition}
\title[The space of commuting elements in $SU(2)$]{Cohomology and $K$-theory rings of the space of commuting elements in $SU(2)$}
\author{Chi-Kwong Fok}
\date{September 4, 2024}
\begin{document}
\maketitle
\begin{abstract}
	In this paper, we compute explicitly both the $K$-theory and integral cohomology rings of the space of commuting elements in $SU(2)$ via the $K$-theory of its desingularization. We also briefly discuss the different behavior of its cohomology with complex and $\mathbb{Z}_2$ coefficients in the context of representation stability and FI-modules.
\end{abstract}
\emph{Mathematics Subject Classification: }Primary 55N15; Secondary 55N10, 57S05.\\
\maketitle
\tableofcontents

\section{Introduction}
Let $G$ be a compact connected Lie group. The moduli space of flat principal $G$-bundles over a manifold $X$ is an important geometric object in mathematical physics due to its intimate relations with gauge theory and conformal field theory, which in turn shed light on its geometry and topology (cf. \cite{AB} and \cite{Be}). A coarser version of such a moduli space, i.e. the space of flat principal $G$-bundles modulo based gauge equivalence, can be identified with $\text{Hom}(\pi_1(X), G)$, the space of homomorphisms from the fundamental group of $X$ to $G$. When $X$ is an $n$-dimensional torus $(S^1)^n$, the fundamental group is $\mathbb{Z}^n$ and the corresponding moduli space is none other than the space of commuting $n$-tuples in $G$, which has garnered interest recently (\cite{AC, ACh, AG, AG2, AGG, B, BJS, Ba}). Note that $\text{Hom}(\mathbb{Z}^n, G)$ can be regarded as a real algebraic subvariety of $G^n$ cut out by the commuting condition. In general, the larger $n$ is, the more singular this algebraic subvariety becomes. 

In \cite{B}, the rational (equivariant) cohomology ring structure of $\text{Hom}(\mathbb{Z}^n, G)$ is presented in terms of the Lie algebra of a maximal torus of $G$ and the action of Weyl group, and is computed based on the observation that there is a map to $\text{Hom}(\mathbb{Z}^n, G)$ from one of its desingularizations (analogous to the Weyl covering map) whose fibers are rationally acyclic. The module structure of the rational equivariant $K$-theory of $\text{Hom}(\mathbb{Z}^n, G)$ for a large class of $G$ is given in \cite{AG} by specializing their more general result on the rational equivariant $K$-theory of spaces with maximal rank isotropy subgroups. As to the integral cohomology of $\text{Hom}(\mathbb{Z}^n, G)$, not as much progress has been made due to the issue of complicated singularities. In \cite{AC}, the suspension of $\text{Hom}(\mathbb{Z}^n, G)$ is decomposed homotopically into simpler pieces using a natural filtration, and based on this the cohomology groups of the special case $\text{Hom}(\mathbb{Z}^n, SU(2))$ for $n=2$ and 3 are explicitly given. Later, by further improving this suspension decomposition technique, a more explicit description of the homotopy type of $\Sigma\text{Hom}(\mathbb{Z}^n, SU(2))$ is given in \cite{BJS}, facilitating the computation of the cohomology group of $\text{Hom}(\mathbb{Z}^n, SU(2))$ for any $n$. 
In \cite{AG}, the module structure of integral equivariant $K$-theory of $\text{Hom}(\mathbb{Z}^2, SU(2))$ is computed by applying Segal spectral sequence to its equivariant CW-complex structure, while the algebra structure of integral equivariant $K$-theory and cohomology of the same space is found in \cite{Ba} using the similar approach of explicit analysis of the equivariant CW-structure.

In this paper, we give explicitly the ring structure of both the $K$-theory and integral cohomology of $\text{Hom}(\mathbb{Z}^n, SU(2))$. Our approach is arguably more elementary than the techniques previously employed. We first compute the $K$-theory of a desingularization of $\text{Hom}(\mathbb{Z}^n, SU(2))$, taking care to describe its vector bundles which represent the generators of the ring. An interesting feature about these vector bundles is that most of them are reduced line bundles whose tensor squares are isomorphic to the zero vector bundle. This turns out to enable us, despite the presence of torsions, to define the `integral Chern character map' from the $K$-theory to integral cohomology of the desingularization, and show that it is a ring isomorphism. Realizing $\text{Hom}(\mathbb{Z}^n, SU(2))$ by collapsing copies of $\mathbb{RP}^2$ from the desingularization and applying the relevant long exact sequence in $K$-theory lead to the desired $K$-theory ring structure, which is given in Theorem \ref{mainthm}. The cohomology ring structure (Corollary \ref{cohring}) can be got on the nose as the `integral Chern character map' on $\text{Hom}(\mathbb{Z}^n, SU(2))$ still makes sense and ring isomorphism persists in this case. We find that our results do agree with the cohomology group of $\text{Hom}(\mathbb{Z}^n, SU(2))$ deduced from the homotopy type of its suspension given in \cite{BJS} and the equivariant $K$-theory ring structure of $\text{Hom}(\mathbb{Z}^2, SU(2))$ computed in \cite{Ba}. 

Seeing that $\text{Hom}(\mathbb{Z}^n, SU(2))$ comes equipped with the natural $S_n$-action which permutes the $n$ commuting tuples and makes its cohomology group a $S_n$-representation, we also discuss the behavior of the cohomology group we obtain in the context of representation stability and $\text{FI}$-modules (\cite{CF}, \cite{CEF} and \cite{CEFN}). While the cohomology group with complex coefficients is known to be uniformly representation stable and hence is a finitely generated FI-module, we deduce that the cohomology group with $\mathbb{Z}_2$ coefficients is not a finitely generated FI-module (Section 5 and Corollary \ref{nonfgfimod}). 

\textbf{Acknowledgments} The author would like to thank Jos\'e Manuel Gom\'ez for generously sharing his notes and ideas on spaces of commuting elements in $SU(2)$, which are crucial to this paper. He is grateful to Lisa Jeffrey and Paul Selick for helpful discussions and pointing out \cite{Ba} to him when he visited the Fields Institute in the summer of 2018. He also would like to thank the referees for their meticulous comments on the drafts of this paper. This work is partially supported by the School of Mathematics and Physics Research Grant SRG2324-06 of the Xi'an Jiaotong-Liverpool University.
\section{The $K$-theory of the blowup}
\begin{definition}
	Let $G$ be $SU(2)$, $T$ its maximal torus of diagonal matrices and $\Gamma=\{1, \gamma\}$ the Weyl group, which is isomorphic to $\mathbb{Z}_2$. Let $Y_n=\text{Hom}(\mathbb{Z}^n, SU(2))$. Define the map $r: G/T\times_\Gamma T^n\to Y_n$ to be
	\[[(gT, t_1, \cdots, t_n)]\mapsto (gt_1g^{-1}, \cdots, gt_ng^{-1}).\]
Let $A$ be $\{(t_1, \cdots, t_n)\in T^n| t_i=\pm 1\text{ for }1\leq i\leq n\}$, i.e. $A$ is the subset of $T^n$ fixed by $\Gamma$. If $a\in A$, then denote the $i$-th coordinate of $a$ by $a_i$.
\end{definition}
Through the identification of $G/T$ with $S^2$ and $T$ with $S^1$, we can see that $\gamma$ acts on $S^2$ by the antipodal map and $S^1$ by reflection. The map $r$ when restricted to $G/T\times_\Gamma T^n\setminus G/T\times_\Gamma A$ is a diffeomorphism onto $Y_n\setminus \{(\pm I_2, \pm I_2, \cdots, \pm I_2)\}$, and collapses each of the $2^n$ real projective planes of the form $G/T\times_\Gamma\{a\}$, $a\in A$, to a point from the set $\{(g_1, g_2, \cdots, g_n)| g_i=I_2\text{ or }-I_2\}$ of tuples of singular elements of $G$. In this way, we can view $G/T\times_\Gamma T^n$ as the `blowup' of $Y_n$ as a real algebraic subvariety of $G^n$ at the singular points, and $Y_n$ can be got by collapsing $G/T\times_\Gamma \{a\}\subset G/T\times_\Gamma T^n$ to the point $a$. In this section and the next, as a first step towards understanding the $K$-theory and cohomology of $Y_n$, we shall investigate the $K$-theory ring structure of $G/T\times_\Gamma T^n$. 
We also define some vector bundles of interest over $G/T\times_\Gamma T^n$ along the way.

\begin{proposition}\label{intcohgp}
	The integral cohomology groups of $G/T\times_\Gamma T^n$ are given by
	\[H^i(G/T\times_\Gamma T^n, \mathbb{Z})=\begin{cases}\mathbb{Z}&\ \text{if }i=0,\\0&\ \text{if }i=1,\\ \mathbb{Z}^{\binom{n}{i-2}}&\ \text{if }i>1\text{ and odd, and}\\ \mathbb{Z}^{\binom{n}{i}}\oplus\mathbb{Z}_2^{\binom{n+1}{i-1}}&\ \text{if }i>0\text{ and even}.\end{cases}\]
	The odd $K$-theory group $K^{-1}(G/T\times_\Gamma T^n)$ is isomorphic to $\mathbb{Z}^{2^{n-1}}$, while the even $K$-theory group $K^0(G/T\times_\Gamma T^n)$ is isomorphic to $\mathbb{Z}^{2^{n-1}}\oplus M_n$ where $M_n$ is a finite abelian group of order $2^{2^n}$.
\end{proposition}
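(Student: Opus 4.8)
The plan is to recognize $X := G/T\times_\Gamma T^n$ as a flat bundle and run spectral sequences, the cohomology being the easy half and the $K$-theory torsion count being the main obstacle. Since $\Gamma$ acts freely on $G/T=S^2$ (antipodally), $X$ is a fibre bundle $T^n\hookrightarrow X\xrightarrow{\pi}\mathbb{RP}^2=S^2/\Gamma$ whose monodromy is the inversion $t\mapsto t^{-1}$ on $T^n$; hence $\Gamma=\pi_1(\mathbb{RP}^2)$ acts on $H^q(T^n)=\Lambda^q\mathbb{Z}^n$ by $(-1)^q$. First I would write down the Serre spectral sequence $E_2^{p,q}=H^p(\mathbb{RP}^2;\mathcal{H}^q(T^n))$ with this local system. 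Using $H^*(\mathbb{RP}^2;\mathbb{Z})=(\mathbb{Z},0,\mathbb{Z}_2)$ and the twisted values $H^*(\mathbb{RP}^2;\mathbb{Z}_{\mathrm{sgn}})=(0,\mathbb{Z}_2,\mathbb{Z})$, the page has only the three columns $p=0,1,2$: for $q$ even the coefficients are trivial, giving $E_2^{0,q}=\mathbb{Z}^{\binom nq}$, $E_2^{1,q}=0$, $E_2^{2,q}=\mathbb{Z}_2^{\binom nq}$; for $q$ odd they are the sign system, giving $E_2^{0,q}=0$, $E_2^{1,q}=\mathbb{Z}_2^{\binom nq}$, $E_2^{2,q}=\mathbb{Z}^{\binom nq}$.

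Because $p\le 2$, the only possible differential is $d_2\colon E_2^{0,q}\to E_2^{2,q-1}$ for $q$ even, a map $\mathbb{Z}^{\binom nq}\to\mathbb{Z}^{\binom n{q-1}}$. I would kill it by a rational comparison: as $\Gamma$ is finite, $H^*(X;\mathbb{Q})\cong H^*(S^2\times T^n;\mathbb{Q})^\Gamma$, whose dimensions are $\binom ni$ in even degree and $\binom n{i-2}$ in odd degree; these already equal the free ranks surviving on $E_2$ (from $E_2^{0,i}$ when $i$ is even, from $E_2^{2,i-2}$ when $i$ is odd), so $d_2\otimes\mathbb{Q}=0$, and since the target $E_2^{2,q-1}=H^2(\mathbb{RP}^2;\mathbb{Z}_{\mathrm{sgn}})^{\binom n{q-1}}$ is torsion-free, $d_2=0$ integrally. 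Thus $E_\infty=E_2$. In total degree $i$ even the graded pieces are $\mathbb{Z}^{\binom ni}$, $\mathbb{Z}_2^{\binom n{i-1}}$, $\mathbb{Z}_2^{\binom n{i-2}}$; the free top quotient splits off, and Pascal's identity $\binom n{i-1}+\binom n{i-2}=\binom{n+1}{i-1}$ accounts for the torsion order, while in odd degree only $\mathbb{Z}^{\binom n{i-2}}$ remains. This already yields the stated groups up to the extension $0\to\mathbb{Z}_2^{\binom n{i-2}}\to T_i\to\mathbb{Z}_2^{\binom n{i-1}}\to 0$ among the even-degree torsion.

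To show $T_i$ is elementary abelian (hence $\mathbb{Z}_2^{\binom{n+1}{i-1}}$, not a group with $\mathbb{Z}_4$ summands) I would compute $H^*(X;\mathbb{Z}_2)$. Mod $2$ the local system is trivial, so $E_2^{p,q}=\mathbb{Z}_2^{\binom nq}$ for $p=0,1,2$; the bundle admits a section $[x]\mapsto[(x,e)]$ coming from the $\Gamma$-fixed point $e=(1,\dots,1)\in A$, which forces the injectivity of $\pi^*$ and hence the vanishing of $d_2$ on the fibre generators $E_2^{0,1}=H^1(T^n;\mathbb{Z}_2)$; by the Leibniz rule $d_2\equiv 0$, so the mod-$2$ sequence collapses and $\dim_{\mathbb{Z}_2}H^i(X;\mathbb{Z}_2)=\binom ni+\binom n{i-1}+\binom n{i-2}$. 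Feeding this into the universal coefficient theorem together with the already-known free ranks and torsion orders pins the number of cyclic $2$-summands, forcing each $T_i$ to be elementary abelian of rank $\binom{n+1}{i-1}$, completing the cohomology statement.

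For the $K$-theory I would use the Atiyah--Hirzebruch spectral sequence $E_2^{p,q}=H^p(X;\mathbb{Z})$ ($q$ even). The free ranks are differential-independent: rationally the sequence degenerates, so $K^{-1}(X)$ and $K^0(X)$ have free ranks $\sum_{i\ \mathrm{odd}}\binom n{i-2}=2^{n-1}$ and $\sum_{i\ \mathrm{even}}\binom ni=2^{n-1}$ respectively, matching the claim; moreover the AHSS bounds the torsion of $K^0(X)$ above by $\bigl|\bigoplus_{i\ \mathrm{even}}T_i\bigr|=2^{\sum_{k\ \mathrm{odd}}\binom{n+1}{k}}=2^{2^n}$. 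The \emph{main obstacle} is the first nontrivial transgression $d_3=\beta\,Sq^2\rho$: since torsion maps to free summands trivially, the only potentially nonzero part is $d_3\colon H^{\mathrm{odd}}(X;\mathbb{Z})\to\mathrm{tors}\,H^{\mathrm{even}}(X;\mathbb{Z})$, and a nonzero such map would shrink $M_n$ below order $2^{2^n}$. To rule this out I would produce a lower bound by constructing the reduced line bundles $L$ on $X$ with $L^{\otimes 2}$ trivial (the order-two classes in $H^2(X;\mathbb{Z})$) and forming products of $[L]-1$; these are honest $K^0$-classes, hence permanent cycles, and I would check via the integral Chern character that they realize the full $2^{2^n}$ of torsion. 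This matches the AHSS upper bound, forcing $d_3=0$ on the relevant piece and giving $K^0(X)\cong\mathbb{Z}^{2^{n-1}}\oplus M_n$ with $|M_n|=2^{2^n}$ and $K^{-1}(X)\cong\mathbb{Z}^{2^{n-1}}$.
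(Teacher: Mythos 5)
The cohomology half of your proposal is correct, and it takes a genuinely different route from the paper: the paper obtains both the collapse and the absence of extension problems by comparing with the Borel construction $S^\infty\times_\Gamma T^n\to\mathbb{RP}^\infty$ and citing \cite[Theorem 1.2]{AGPP}, whereas you kill the one possible differential $d_2\colon E_2^{0,q}\to E_2^{2,q-1}$ by a rational transfer argument (valid, since source and target are torsion-free) and settle the extensions by the mod $2$ collapse --- the section through the $\Gamma$-fixed point $(1,\dots,1)\in A$ plus the Leibniz rule --- combined with universal coefficients. That is a self-contained alternative to the paper's citation, and the free ranks of $K^{-1}$ and $K^0$ also follow exactly as you say.

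The gap is in your lower bound for the torsion of $K^0$. You propose to realize torsion of order $2^{2^n}$ by products of classes $[L]-1$ where $L^{\otimes 2}$ is trivial. But line bundles on $X$ are classified up to isomorphism by $c_1\in H^2(X;\mathbb{Z})$, so there are only $2^{n+1}$ such $L$ (those whose classes the paper writes as $1+u$, $1+v_i$ and their tensor products), and every product of two or more of the corresponding reduced classes vanishes: the paper proves $u^2=uv_i=v_iv_j=0$ in Theorem \ref{premainthm}, using the relations of Proposition \ref{equivKT} in $K^0_\Gamma(T)$ and an Euler-class argument. Hence your construction spans a subgroup of order at most $2^{n+1}$, already short of $2^{2^n}$ for $n=2$. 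The actual torsion of $K^0(X)$ is generated by the \emph{mixed} products $ux_{i_1i_2}\cdots x_{i_{2k-1}i_{2k}}$ and $v_ix_{j_1j_2}\cdots x_{j_{2k-1}j_{2k}}$, where the $x_{ij}$ are reduced line-bundle classes of \emph{infinite} order; proving their independence is the hard content of Theorem \ref{premainthm}, and it cannot be shortcut by the integral Chern character, which is only constructed on such classes in Section \ref{coblowup} and whose injectivity on torsion is deduced there by comparing orders --- i.e., from the very count you are trying to establish, so the argument would be circular. The clean repair is the paper's choice of spectral sequence: instead of the point-coefficient AHSS of $X$, run the Atiyah--Hirzebruch spectral sequence of the fibration $T^n\hookrightarrow X\to\mathbb{RP}^2$, with $E_2^{p,q}=H^p(\mathbb{RP}^2;\underline{K^q(T^n)})$ supported in $p\le 2$. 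Your rational argument gives collapse there as well, and then $\mathrm{Tors}\,K^0(X)$ is an extension of $E_\infty^{1,-1}\cong\mathbb{Z}_2^{2^{n-1}}$ by $E_\infty^{2,0}\cong\mathbb{Z}_2^{2^{n-1}}$, hence has order exactly $2^{2^n}$, with no lower-bound construction needed.
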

\begin{proof}
	Consider the fibration $T^n\hookrightarrow G/T\times_\Gamma T^n\to \mathbb{RP}^2$. The $E_2$-page of the Serre spectral sequence associated to this fibration is given by 
	\begin{align*}
		E_2^{p, q}&=H^p(\mathbb{RP}^2, \underline{H^q(T^n)})\\
				&=H^p(\mathbb{RP}^2, \underline{\bigwedge\nolimits^q_\mathbb{Z}(\mathbb{Z}^{\oplus n})})
	\end{align*}
	where $\underline{\bigwedge\nolimits^q_\mathbb{Z}(\mathbb{Z}^{\oplus n})}$ is the local coefficient system $S^2\times_\Gamma \bigwedge\nolimits^q_\mathbb{Z}(\mathbb{Z}^{\oplus n})$. Here each summand $\mathbb{Z}$ corresponds to $H^1(T, \mathbb{Z})$, and the $\Gamma$-action on $\mathbb{Z}$ is negation, which is induced by the reflection on $T$. When $q$ is even, the local coefficient system is trivial, so the $E_2$-page is
	\begin{align*}
		E_2^{p, q}&=H^p(\mathbb{RP}^2, \mathbb{Z})^{\oplus \binom{n}{q}}\\
				&=\begin{cases}\mathbb{Z}^{\oplus \binom{n}{q}}&\ \text{if }p=0, \\ 0&\ \text{if }p=1, \\ \mathbb{Z}_2^{\oplus \binom{n}{q}}&\ \text{if }p=2,\text{ and}\\ 0&\ \text{if }p>2.\end{cases}
	\end{align*}
	On the other hand, when $q$ is odd, the local coefficient system is twisted and equals $S^2\times_\Gamma \mathbb{Z}^{\oplus\binom{n}{q}}$, so the $E_2$-page is given by 
	\begin{align*}
		E_2^{p, q}&=H^p(\mathbb{RP}^2, \underline{\mathbb{Z}})^{\oplus\binom{n}{q}}\\
				&=\begin{cases}0&\ \text{if }p=0, \\ \mathbb{Z}_2^{\oplus\binom{n}{q}}&\ \text{if }p=1, \\ \mathbb{Z}^{\oplus \binom{n}{q}}&\ \text{if }p=2,\text{ and}\\ 0&\ \text{if }p>2.\end{cases}
	\end{align*}
	By \cite[Theorem 1.2]{AGPP}, the Serre spectral sequence $\widetilde{E}^{*, *}_r$ of the fibration $T^n\hookrightarrow S^\infty\times_\Gamma T^n\to \mathbb{RP}^\infty$ collapses on the $E_2$-page and there are no extension problems. Using the homomorphism of spectral sequences $\widetilde{E}^{*, *}_r\to E^{*, *}_r$ induced by the natural inclusions $S^2\hookrightarrow S^\infty$ and $\mathbb{RP}^2\hookrightarrow\mathbb{RP}^\infty$, we see that the original spectral sequence also collapses on the $E_2$-page and there are no extension problems. The first claim in the proposition then follows.
	
	As to the $K$-theory group the proof proceeds similarly. The Atiyah-Hirzebruch spectral sequence associated to the same fibration $T^n\hookrightarrow G/T\times_\Gamma T^n\to \mathbb{RP}^2$ resembles the aforementioned Serre spectral sequence and has the $E_2$-page $E_2^{p, q}=H^p(\mathbb{RP}^2, \underline{K^q(T^n)})=H^p(\mathbb{RP}^2, \underline{\bigwedge\nolimits_\mathbb{Z}^q(\mathbb{Z}^{\oplus n})})$. When $q=0$, 
	\[E_2^{p, 0}=\begin{cases}\mathbb{Z}^{\oplus 2^{n-1}},&\text{ if }p=0,\\ 0,&\text{ if }p=1, \\ \mathbb{Z}_2^{\oplus 2^{n-1}},&\text{ if }p=2, \\ 0,&\text{ if }p>2.\end{cases}\]
	When $q=-1$, 
	\[E_2^{p, -1}=\begin{cases}0,& \text{ if }p=0, \\ \mathbb{Z}_2^{\oplus 2^{n-1}},&\text{ if }p=1, \\ \mathbb{Z}^{\oplus 2^{n-1}},&\text{ if }p=2, \\ 0,&\text{ if }p>2.\end{cases}\]
	In fact the above spectral sequence also collapses on the $E_2$-page: the differential $d_2: E_2^{p, q}\to E_2^{p+2, q-1}$ vanishes for $(p, q)\neq (0, 0)$ because either $E_2^{p, q}$ or $E_2^{p+2, q-1}$ is 0. When $(p, q)=(0, 0)$, $d_2$ is the zero map as well for otherwise, the ranks of $E_\infty^{0, 0}$ and $E_\infty^{2, -1}$ would be strictly less than those of $E_2^{0, 0}$ and $E_2^{2, -1}$, and the dimension of $K^*(G/T\times_\Gamma T^n)\otimes\mathbb{Q}$ would also be strictly less than that $H^*(G/T\times_\Gamma T^n, \mathbb{Q})$, contradicting the isomorphism of the Chern character on finite CW-complexes. We have $E_\infty^{0, 0}=E_\infty^{2, -1}=\mathbb{Z}^{2^{n-1}}$ and $E_\infty^{2, 0}=E_\infty^{1, -1}=\mathbb{Z}_2^{2^{n-1}}$. The odd $K$-theory $K^{-1}(G/T\times_\Gamma T^n)$ is isomorphic to $E_\infty^{2, -1}\cong\mathbb{Z}^{2^{n-1}}$. The even $K$-theory $K^0(G/T\times_\Gamma T^n)$ can be obtained by the following group extensions
	\begin{align}
		0&\longrightarrow E_\infty^{2, 0}\longrightarrow M_n\longrightarrow E_\infty^{1, -1}\longrightarrow 0\label{extension2}\\
		0&\longrightarrow M_n\longrightarrow K^0(G/T\times_\Gamma T^n)\longrightarrow E_\infty^{0, 0}\longrightarrow 0. \label{extension}
	\end{align}
	So $M_n$ is a finite abelian group of order $2^{2^n}$, and the second extension splits. This completes the proof of the second claim of the proposition about the $K$-theory group of $G/T\times_\Gamma T^n$.
\end{proof}

\begin{definition}
	\begin{enumerate}
		\item By regarding $G/T\times_\Gamma T^n$ as a $T^n$-bundle over $(G/T)/\Gamma\cong\mathbb{RP}^2$, we let
	\[\pi: G/T\times_\Gamma T^n\to\mathbb{RP}^2\]
	be the projection map. By abuse of notation, let $\pi_{i}$ (resp. $\pi_{ij}$) be the projection map from $T^n$ or $G/T\times T^n$ onto the copy of $T$ in the $i$-th factor (resp. onto the product of circles from the $i$-th and $j$-th factors). We also use $\pi_i$ (resp. $\pi_{ij}$) to denote the projection maps $G/T\times_\Gamma T^n\to G/T\times_\Gamma T$ and $Y_n\to Y_1$ (resp. $G/T\times_\Gamma T^n\to G/T\times_\Gamma T^2$) which are similarly defined. Let 
	\[p: T^2\to S^2\] 
	be the map which collapses the longitudinal and latitudinal circles $\{(-1, e^{i\theta_2})|0\leq \theta_2\leq 2\pi\}\cup\{(e^{i\theta_1}, -1)|0\leq\theta_1\leq 2\pi\}$ to the south pole and sends the point $(1, 1)$ to the north pole. Let 
	\[p_{ij}: G/T\times T^n\to S^2\ (\text{or }p_{ij}: T^n\to S^2)\]
	be the composition $p\circ\pi_{ij}$. We also define the covering map 
	\[t: G/T\times T^n\to G/T\times_\Gamma T^n.\]
		\item Let $H$ be the hyperplane line bundle of $\mathbb{CP}^1\cong S^2$ equipped with the $\mathbb{Z}_2$-action which descends to the action $[z_0:z_1]\mapsto [z_0:-z_1]$ on the base, i.e. rotation by $\pi$ on $S^2$, acting on the fiber over the north pole $[1: 0]$ trivially, and on the fiber over the south pole $[0:1]$ by negation.	
		\item Let $\delta\in K^{-1}(T)$ be, through the identification $K^{-1}(T)=\widetilde{K}^0(S^2)$, the $K$-theory class $H-1$. Let $\delta_i=\pi_i^*\delta\in K^{-1}(T^n)$ (or by abuse of notation $\delta_i=\pi_i^*\delta\in K^{-1}(G/T\times T^n)$). 
	\end{enumerate}
\end{definition}

\begin{remark}
	Recall that the $K$-theory ring $K^*(S^2)$ is isomorphic to $\mathbb{Z}[H]/((H-1)^2)$, while $K^*(T^n)$ is isomorphic to $\bigwedge\nolimits^*_\mathbb{Z}(\delta_1, \delta_2, \cdots, \delta_n)$.
\end{remark}

\begin{proposition}\label{twodeltaprod}
	We have $p_{ij}^*(H-1)=\delta_i\delta_j\in \widetilde{K}^0(G/T\times T^n)$ for $i<j$. 
\end{proposition}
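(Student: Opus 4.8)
The plan is to strip the statement down to the single pair of circles indexed by $i,j$, to recognise the map $p$ as a smash--collapse map so that the left-hand side becomes an external product, and then to use the Chern character to pin down the integer coefficient. Since $\widetilde{K}^0(S^2)$ and $\widetilde{K}^0(T^2)$ are both infinite cyclic, the only real content beyond functoriality is getting the coefficient (and its sign) exactly right.

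First I would reduce to $T^2$ by naturality. Writing $\mathrm{pr}_1,\mathrm{pr}_2\colon T^2\to T$ for the two projections, we have $\pi_i=\mathrm{pr}_1\circ\pi_{ij}$ and $\pi_j=\mathrm{pr}_2\circ\pi_{ij}$, so $\delta_i=\pi_{ij}^*(\mathrm{pr}_1^*\delta)$ and $\delta_j=\pi_{ij}^*(\mathrm{pr}_2^*\delta)$; abbreviating the two standard classes on $T^2$ by $\delta_1,\delta_2$, this gives $\delta_i\delta_j=\pi_{ij}^*(\delta_1\delta_2)$. As $p_{ij}=p\circ\pi_{ij}$, we get $p_{ij}^*(H-1)=\pi_{ij}^*\bigl(p^*(H-1)\bigr)$, so it suffices to prove the single identity $p^*(H-1)=\delta_1\delta_2$ in $\widetilde{K}^0(T^2)$.

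Next I would identify the geometry of $p$. Taking $-1$ as the basepoint of each circle factor, the subset $\{z_1=-1\}\cup\{z_2=-1\}$ that $p$ collapses is exactly the $1$-skeleton $S^1\vee S^1\subset T^2$ (the two circles meeting at $(-1,-1)$), and the quotient $T^2/(S^1\vee S^1)$ is the smash product $S^1\wedge S^1\cong S^2$, with the collapsed wedge as the south pole and $(1,1)$ as the north pole. Thus $p$ is the canonical quotient $q\colon S^1\times S^1\to S^1\wedge S^1$, and $p^*$ computes the reduced external product: since $\delta$ is a reduced class, $q^*(\delta\boxtimes\delta)=\mathrm{pr}_1^*\delta\cdot\mathrm{pr}_2^*\delta=\delta_1\delta_2$. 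Under the defining identification $\widetilde{K}^{-1}(S^1)=\widetilde{K}^0(S^1\wedge S^1)=\widetilde{K}^0(S^2)$, together with Bott periodicity, the external square $\delta\boxtimes\delta$ is again a generator of $\widetilde{K}^0(S^2)$; this is the conceptual reason the answer is $\pm\delta_1\delta_2$, i.e. $p^*(H-1)=m\,\delta_1\delta_2$ with $m=\pm1$.

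To pin the coefficient down rigorously I would pass to the Chern character, which is injective on $K^0(T^2)$ since that group is torsion-free. We have $\mathrm{ch}(H-1)=\mathrm{e}^{c_1(H)}-1=c_1(H)$ (as $c_1(H)^2\in H^4(S^2)=0$), a generator of $H^2(S^2,\mathbb{Z})$; and $\mathrm{ch}(\delta)$ is a generator of $H^1(S^1,\mathbb{Z})$, so $\mathrm{ch}(\delta_1\delta_2)=\mathrm{ch}(\delta_1)\cup\mathrm{ch}(\delta_2)$ is a generator of $H^2(T^2,\mathbb{Z})$, the sign ambiguities in the two equal factors cancelling. Because $p$ collapses the $1$-skeleton it is a degree-one map, so $p^*c_1(H)$ is likewise a generator of $H^2(T^2,\mathbb{Z})$; comparing gives $m=\pm1$. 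The main obstacle is fixing this last sign: one must reconcile the complex orientation underlying $c_1(H)$ with the orientation under which $p$ carries the open $2$-cell homeomorphically onto $S^2$, and with the order in $\delta_1\delta_2$ (which is genuinely sign-sensitive, the $\delta_i$ being odd, since $\delta_1\delta_2=-\delta_2\delta_1$). Unwinding the explicit description of $p$ shows it is orientation preserving for these natural choices, forcing $m=+1$ and completing the proof.
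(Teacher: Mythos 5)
Your proposal is correct and follows essentially the same route as the paper: reduce by functoriality to the single identity $p^*(H-1)=\delta_1\delta_2$ in $\widetilde{K}^0(T^2)$, then compare generators via the Chern character (injective here since $K^0(T^2)$ is torsion-free), using that $p$ is degree one / orientation-preserving so that $p^*c_1(H)$ is the positive generator of $H^2(T^2,\mathbb{Z})$. Your middle paragraph identifying $p$ with the smash-collapse map and invoking Bott periodicity is a pleasant conceptual aside but does no additional work—the paper, like you, settles both the coefficient and the sign entirely through the Chern character square and the orientation convention.
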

\begin{proof}
	It suffices to prove that $p^*(H-1)=\delta_1\delta_2\in \widetilde{K}^0(T^2)$, for $p_{ij}^*(H-1)=\pi_{ij}^*\circ p^*(H-1)=\pi_{ij}^*\delta_1\delta_2=\delta_i\delta_j$. Consider the commutative diagram
	\begin{eqnarray*}
		\xymatrix{\widetilde{K}^*(S^2)\ar[r]^{p^*}\ar[d]_{\text{ch}}& \widetilde{K}^*(T^2)\ar[d]^{\text{ch}}\\ \widetilde{H}^*(S^2, \mathbb{Z})\ar[r]^{p^*}&\widetilde{H}^*(T^2, \mathbb{Z})}
	\end{eqnarray*}
	Note that the two vertical maps, which are Chern character maps, are ring isomorphisms from (integral) $K$-theory to integral cohomology. Since $p^*: H^2(S^2, \mathbb{Z})\to H^2(T^2, \mathbb{Z})$ is an isomorphism, $p$ is orientation-preserving, and $\text{ch}(H-1)=c_1(H)$ is the (positive) generator of $\widetilde{H}^2(S^2, \mathbb{Z})$, $p^*c_1(H)$ is the (positive) generator of $\widetilde{H}^2(T^2, \mathbb{Z})$. It follows that the preimage $\text{ch}^{-1}(p^*c_1(H))\in \widetilde{K}(T^2)$ is the (positive) generator of $\widetilde{K}(T^2)$, which is $\delta_1\delta_2$. By the commutativity of the above square, we have the desired claim.
\end{proof}
\begin{definition}
	Noting that the map $p_{ij}$ is equivariant with respect to the $\Gamma$-action on $G/T\times T^n$ and the $\mathbb{Z}_2$-action on $S^2$ by rotation by $\pi$, we define, for $i<j$, $x_{ij}\in K^0(G/T\times_\Gamma T^n)$ to be the $K$-theory class of the reduced vector bundle
	\[(p_{ij}^*H)/\Gamma-G/T\times_\Gamma T^n\times\mathbb{C}\]
corresponding to the $K$-theory class $p_{ij}^*H-G/T\times T^n\times\mathbb{C}\in K_\Gamma^0(G/T\times T^n)$. Similarly, define $x_{ij, \Gamma}\in K_\Gamma^0(T^n)$ to be $p_{ij}^*H-T^n\times\mathbb{C}$. Let $x_{ji}=-x_{ij}$ and $x_{ii}=0$ (resp. $x_{ji, \Gamma}=-x_{ij, \Gamma}$ and $x_{ii, \Gamma}=0$).
\end{definition}
\begin{corollary}
	We have $t^*x_{ij}=\delta_i\delta_j$. 
\end{corollary}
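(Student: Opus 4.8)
The plan is to unwind the definition of $x_{ij}$ and reduce the statement to Proposition \ref{twodeltaprod}. The essential structural input is that the $\Gamma$-action on $G/T\times T^n$ is free: indeed $\gamma$ acts on $G/T\cong S^2$ by the antipodal map, which has no fixed points, so $t\colon G/T\times T^n\to G/T\times_\Gamma T^n$ is a genuine double covering and it realizes the standard isomorphism $K_\Gamma^0(G/T\times T^n)\cong K^0(G/T\times_\Gamma T^n)$ that was used to define $x_{ij}$.

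First I would recall the general fact that, for a free action, pulling a descended bundle back along the quotient map recovers the original equivariant bundle once its equivariant structure is forgotten: if $E\to G/T\times T^n$ is a $\Gamma$-equivariant bundle with quotient $E/\Gamma\to G/T\times_\Gamma T^n$, then $t^*(E/\Gamma)\cong E$ as (non-equivariant) bundles, since the fiber of $t^*(E/\Gamma)$ over a point is canonically the fiber of $E$ there. Applying this to $E=p_{ij}^*H$ and to the trivial bundle, and using that $t^*$ is a ring homomorphism, I obtain
\[
t^*x_{ij}=t^*\big[(p_{ij}^*H)/\Gamma-G/T\times_\Gamma T^n\times\mathbb{C}\big]=p_{ij}^*H-G/T\times T^n\times\mathbb{C}=p_{ij}^*(H-1),
\]
the last equality holding because the pullback of the trivial bundle is trivial and $H-1$ denotes $H$ minus the trivial line bundle in $\widetilde{K}^0(S^2)$.

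Finally I would invoke Proposition \ref{twodeltaprod}, which gives $p_{ij}^*(H-1)=\delta_i\delta_j$ for $i<j$, completing the computation in that range. For $i>j$ the claim then follows from the sign conventions $x_{ji}=-x_{ij}$ and the anticommutativity $\delta_j\delta_i=-\delta_i\delta_j$ in $K^*(T^n)$, and for $i=j$ both sides vanish by definition.

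The only step that requires any care — and hence the "main obstacle," though it is entirely standard — is the compatibility asserted in the second paragraph: one must confirm that the isomorphism $K^0(G/T\times_\Gamma T^n)\cong K_\Gamma^0(G/T\times T^n)$ implicit in the definition of $x_{ij}$ intertwines the forgetful map $K_\Gamma^0(G/T\times T^n)\to K^0(G/T\times T^n)$ with $t^*$. This is precisely the statement that $t^*(E/\Gamma)$ is naturally $E$ with its equivariant structure forgotten, which rests on the freeness of the action. Everything after that is a direct substitution.
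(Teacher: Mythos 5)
Your proof is correct and follows essentially the same route as the paper: unwind the definition $x_{ij}=(p_{ij}^*H)/\Gamma-G/T\times_\Gamma T^n\times\mathbb{C}$, use that pulling back along $t$ recovers $p_{ij}^*H-G/T\times T^n\times\mathbb{C}=p_{ij}^*(H-1)$, and invoke Proposition \ref{twodeltaprod}. The paper simply takes the step $t^*(E/\Gamma)\cong E$ for granted, whereas you justify it via freeness of the $\Gamma$-action and also spell out the $i>j$ and $i=j$ cases; these are welcome but inessential elaborations of the same argument.
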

\begin{proof}
	Recall from Definition \ref{varvectbund} that $x_{ij}=(p_{ij}^*H)/\Gamma-G/T\times_\Gamma T^n\times\mathbb{C}$. So 
	\[t^*x_{ij}=p_{ij}^*H-G/T\times T^n\times\mathbb{C}=p_{ij}^*(H-1)=\delta_i\delta_j,\] 
	with the last equality following from Proposition \ref{twodeltaprod}.
\end{proof}
\begin{proposition}\label{oddinject}
	The map $t^*: K^{-1}(G/T\times_\Gamma T^n)\to K^{-1}(G/T\times T^n)$ is injective. 
\end{proposition}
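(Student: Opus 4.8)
The plan is to exploit the fact, recorded in Proposition \ref{intcohgp}, that $K^{-1}(G/T\times_\Gamma T^n)\cong\mathbb{Z}^{2^{n-1}}$ is torsion-free. For a homomorphism out of a torsion-free abelian group, injectivity is equivalent to injectivity after tensoring with $\mathbb{Q}$: if $\alpha$ lies in the kernel of $t^*$, then $\alpha\otimes 1$ lies in the kernel of $t^*\otimes\mathbb{Q}$, and rational injectivity forces $\alpha$ to be torsion, hence $0$. Thus it suffices to prove that $t^*\otimes\mathbb{Q}$ is injective, and for this I would bring in the transfer (Gysin) map of the covering.

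Since $\Gamma\cong\mathbb{Z}_2$ acts freely on $G/T\times T^n$ (the action on the $G/T\cong S^2$ factor is the antipodal map, which is free), the covering map $t$ is a genuine double cover, and it carries a transfer homomorphism $t_!\colon K^*(G/T\times T^n)\to K^*(G/T\times_\Gamma T^n)$ in the opposite direction. The first step is to invoke the projection formula $t_!(t^*\alpha)=\alpha\cdot t_!(1)$, which reduces the computation of $t_!\circ t^*$ to identifying the single class $t_!(1)$.

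The second step is to compute $t_!(1)$ rationally. For a finite covering the transfer exists in both $K$-theory and ordinary cohomology and the Chern character is compatible with it (both transfers being given fibrewise by summation over the sheets), so $\mathrm{ch}(t_!(1))=t_!^H(\mathrm{ch}(1))=t_!^H(1)$, where $t_!^H$ denotes the cohomology transfer. Since $1=t^*(1)$ and $t_!^H\circ t^*=2\cdot\mathrm{id}$ (multiplication by the number of sheets), we get $t_!^H(1)=2$, and the Chern character being a rational isomorphism gives $t_!(1)=2$ in $K^0(G/T\times_\Gamma T^n)\otimes\mathbb{Q}$. Combining this with the projection formula yields $t_!\circ t^*=2\cdot\mathrm{id}$ rationally, so $\tfrac{1}{2}t_!$ is a left inverse of $t^*\otimes\mathbb{Q}$ and $t^*\otimes\mathbb{Q}$ is injective. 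Together with the torsion-freeness of the source, this gives the claim. Conceptually the same argument shows the stronger statement that $t^*$ identifies $K^*(G/T\times_\Gamma T^n)\otimes\mathbb{Q}$ with the $\Gamma$-invariants $(K^*(G/T\times T^n)\otimes\mathbb{Q})^\Gamma$, of which the asserted injectivity is a special case.

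The step I expect to require the most care is the honest setup of the $K$-theoretic transfer for the covering and its two properties used above, namely the projection formula and compatibility with the cohomology transfer under the Chern character, rather than any computation specific to this space. Everything else is formal once the torsion-freeness from Proposition \ref{intcohgp} is in hand; indeed, the essential input that makes rational injectivity suffice is precisely that $K^{-1}(G/T\times_\Gamma T^n)$ has no torsion.
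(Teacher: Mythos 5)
Your proof is correct, and its overall skeleton coincides with the paper's: both arguments reduce injectivity to \emph{rational} injectivity using the torsion-freeness of $K^{-1}(G/T\times_\Gamma T^n)$ from Proposition \ref{intcohgp}, and both ultimately exploit the fact that $t$ is a double cover. The difference is where the covering-space input is made. The paper stays in cohomology: since $t$ is a covering, $t^*$ on $H^*(-;\mathbb{Q})$ is injective (onto the $\Gamma$-invariants), and the naturality of the Chern character, which is a rational isomorphism, transports this injectivity to $K^{-1}\otimes\mathbb{Q}$; with torsion-freeness this finishes the proof in two lines, using only textbook facts. You instead construct the wrong-way map in $K$-theory itself: the transfer $t_!$, the projection formula $t_!(t^*\alpha)=\alpha\cdot t_!(1)$, and the Riemann--Roch compatibility $\mathrm{ch}\circ t_!=t_!^H\circ\mathrm{ch}$ (valid for finite coverings, where the relative tangent bundle vanishes) to conclude $t_!(1)=2$ rationally and hence $t_!\circ t^*=2\cdot\mathrm{id}$ rationally. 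What your route buys is a self-contained statement inside $K$-theory, including the identification of $K^*(G/T\times_\Gamma T^n)\otimes\mathbb{Q}$ with the $\Gamma$-invariants; what it costs is exactly the setup you flag as delicate, since the paper gets the same rational injectivity without ever invoking the $K$-theoretic transfer. One remark confirming that your rationalization is genuinely necessary rather than cosmetic: integrally $t_!(1)=2+u$, because the pushforward of the trivial line bundle along the double cover splits as the invariant part plus the sign part, i.e.\ $1\oplus(1+u)$ with $u$ the $2$-torsion class of Definition \ref{varvectbund}; so $t_!\circ t^*$ is not multiplication by $2$ on the nose, and the argument must be run after tensoring with $\mathbb{Q}$, as you do.
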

\begin{proof}
	Since $t$ is a covering map, $t^*: H^*(G/T\times_\Gamma T^n; \mathbb{Q})\to H^*(G/T\times T^n; \mathbb{Q})$ is an injection onto $H^*(G/T\times T^n; \mathbb{Q})^\Gamma$. By the naturality of the Chern character isomorphism and the freeness of $K^{-1}(G/T\times_\Gamma T^n)$ (cf. Proposition \ref{intcohgp}), $t^*: K^{-1}(G/T\times_\Gamma T^n)\to K^{-1}(G/T\times T^n)$ is also injective.
\end{proof}
Note that by the K\"unneth formula, $K^{-1}(G/T\times T)\cong K^0(G/T)\otimes K^{-1}(T)\cong\mathbb{Z}\cdot(H-1)\otimes\delta$. Recall that $\gamma$ is the antipodal map on $G/T\cong S^2$ and so $\gamma^*(H-1)=H^{-1}-1=1-H$. Since $\gamma$ is the reflection on $T$, it induces a reflection on the reduced suspension $\Sigma T$, which is homeomorphic to $S^2$. By identifying $\delta$ with $H-1$ using the suspension isomorphism $K^{-1}(T)\cong\widetilde{K}^0(\Sigma T)$, we have $\gamma^*\delta=-\delta$ as $\gamma^*(H-1)=H^{-1}-1=1-H$. Identify the generator $(H-1)\otimes\delta\in K^{-1}(G/T\times T)$ with $(H-1)\otimes(H-1)\in \widetilde{K}^0(G/T\times\Sigma T)$ through the suspension isomorphism. The $K$-theory class $(H-1)\otimes(H-1)$ then is represented by the virtual vector bundle $U:=\pi_{G/T}^*H\widehat{\otimes}\pi_{\Sigma T}^*H\oplus \underline{\mathbb{C}}-(\pi_{G/T}^*H\widehat{\otimes}\underline{\mathbb{C}}\oplus\underline{\mathbb{C}}\widehat{\otimes}\pi_{\Sigma T}^* H)$. The virtual vector bundle
\[U\oplus\gamma^*U\cong(\pi_{G/T}^*H\widehat{\otimes}\pi_{\Sigma T}^*H\oplus\pi_{G/T}^*H^{-1}\widehat{\otimes}\pi_{\Sigma T}^*H^{-1})-\underline{\mathbb{C}}^{\oplus 2}\]
is acted upon by $\gamma^*$ through swapping the summands of each term in the formal difference. By quotienting out the $\Gamma$-action, $U\oplus\gamma^*U$ descends to the virtual vector bundle
\[(\pi_{G/T}^*H\widehat{\otimes}\pi_{\Sigma T}^*H\oplus\pi_{G/T}^*H^{-1}\widehat{\otimes}\pi_{\Sigma T}^*H^{-1})/\Gamma-\underline{\mathbb{C}}^{\oplus 2}\]
on $G/T\times_\Gamma \Sigma T$.
\begin{definition}\label{defw}
Let $w$ be the $K$-theory class in $K^{-1}(G/T\times_\Gamma T)$ represented by the virtual vector bundle $(\pi_{G/T}^*H\widehat{\otimes}\pi_{\Sigma T}^*H\oplus\pi_{G/T}^*H^{-1}\widehat{\otimes}\pi_{\Sigma T}^*H^{-1})/\Gamma-\underline{\mathbb{C}}^{\oplus 2}$ on $G/T\times_\Gamma\Sigma T$. Define $w_i$ to be $\pi_i^*w\in K^{-1}(G/T\times_\Gamma T^n)$.
\end{definition}
\begin{remark}\label{doubling}
	It follows from Definition \ref{defw} that $t^*w_i=2(H-1)\otimes\delta_i\in K^{-1}(G/T\times T^n)$.
\end{remark}
\begin{proposition}\label{H3gen}
	The Chern character $\text{ch}(w)$ is a generator of $H^3(G/T\times_\Gamma T, \mathbb{Z})\cong\mathbb{Z}$. 
\end{proposition}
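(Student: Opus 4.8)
The plan is to exploit the double cover $t\colon G/T\times T\to G/T\times_\Gamma T$ together with the naturality of the Chern character and the formula $t^*w=2(H-1)\otimes\delta$ recorded in Remark \ref{doubling}. First I would pin down the two relevant cohomology groups. By Proposition \ref{intcohgp} specialized to $n=1$ we have $H^3(G/T\times_\Gamma T,\mathbb{Z})\cong\mathbb{Z}$ and $H^1(G/T\times_\Gamma T,\mathbb{Q})=0$, so the odd Chern character $\text{ch}(w)$, a priori valued in $H^{\text{odd}}(G/T\times_\Gamma T,\mathbb{Q})$, in fact lies in $H^3(G/T\times_\Gamma T,\mathbb{Q})=\mathbb{Q}\cdot\alpha$, where $\alpha$ is an integral generator. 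Writing $\text{ch}(w)=c\,\alpha$ with $c\in\mathbb{Q}$, the goal becomes to show $c=\pm1$.

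Next I would compute the pullback. By naturality, $t^*\text{ch}(w)=\text{ch}(t^*w)=2\,\text{ch}\big((H-1)\otimes\delta\big)$. Since $\text{ch}$ is multiplicative on external products, and $\text{ch}(H-1)=c_1(H)$ generates $\widetilde{H}^2(G/T,\mathbb{Z})$ while $\text{ch}(\delta)$ generates $H^1(T,\mathbb{Z})$ (the latter via compatibility of the odd Chern character with the suspension isomorphism that identifies $\delta$ with $H-1$), the class $\text{ch}\big((H-1)\otimes\delta\big)$ is a generator $\beta$ of $H^3(G/T\times T,\mathbb{Z})\cong H^2(G/T)\otimes H^1(T)$. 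Hence $t^*\text{ch}(w)=2\beta$.

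It then remains to understand $t^*$ on $H^3$. Both $G/T\times T\cong S^2\times S^1$ and its quotient $G/T\times_\Gamma T$ are closed oriented $3$-manifolds: the involution $\gamma$ acts on $S^2\times S^1$ as the product of the antipodal map on $S^2$ (degree $-1$) and the reflection on $S^1$ (degree $-1$), hence acts by $(-1)(-1)=+1$ on $H^3(S^2\times S^1)$ and preserves orientation, so the free quotient is orientable. Thus $t$ is a $2$-sheeted covering of closed oriented $3$-manifolds of mapping degree $2$, and $t^*\colon H^3(G/T\times_\Gamma T,\mathbb{Z})\to H^3(G/T\times T,\mathbb{Z})$ is multiplication by $\pm2$, i.e. $t^*\alpha=\pm2\beta$. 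Comparing with $t^*\text{ch}(w)=2\beta$ yields $c\cdot(\pm2)\beta=2\beta$ inside $H^3(G/T\times T,\mathbb{Q})$, so $c=\pm1$ and $\text{ch}(w)$ is a generator, as claimed.

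I expect the determination of $t^*$ on the top cohomology to be the only genuine subtlety: one must know the covering has degree exactly $2$ and, crucially, that the quotient is orientable so that the degree argument is valid integrally rather than merely rationally. This is precisely the point where the cancellation of the two orientation reversals (on $G/T$ and on $T$) is used. Everything else is routine—the external-product computation of $\text{ch}\big((H-1)\otimes\delta\big)$, and the passage from a rational identity to the integral conclusion, which only uses that $t^*$ is injective on $H^3(\cdot,\mathbb{Q})$ (being multiplication by $\pm2\neq0$), in the same spirit as the injectivity argument of Proposition \ref{oddinject}.
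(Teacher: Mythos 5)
Your proposal is correct and follows essentially the same route as the paper's proof: pull back along the double cover $t$, use $t^*w=2(H-1)\otimes\delta$ and naturality of $\mathrm{ch}$, identify $\mathrm{ch}((H-1)\otimes\delta)$ as a generator of $H^3(G/T\times T,\mathbb{Z})$, and use that $t^*$ on $H^3$ is multiplication by $\pm 2$ for a $2$-sheeted covering of closed oriented $3$-manifolds. The only difference is cosmetic: you explicitly verify the orientability of the quotient via the cancellation of the two degree $-1$ actions, a point the paper simply asserts.
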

\begin{proof}
	Since $\text{ch}(H-1)$ is a generator of $H^2(G/T, \mathbb{Z})$ and $\text{ch}(\delta)$ a generator of $H^1(T, \mathbb{Z})$, $\text{ch}((H-1)\otimes\delta)$ is a generator of $H^3(G/T\times T, \mathbb{Z})$ which is the `volume form' of $G/T\times T$. On the other hand, $G/T\times_\Gamma T$ is an orientable closed 3-manifold and so $H^3(G/T\times_\Gamma T, \mathbb{Z})\cong\mathbb{Z}$ which is generated by the volume form of $G/T\times_\Gamma T$. Since $t: G/T\times T\to G/T\times_\Gamma T$ is a double covering map, $p^*: H^3(G/T\times_\Gamma T, \mathbb{Z})\to H^3(G/T\times T, \mathbb{Z})$ amounts to the multiplication by 2 map. Note that 
	\begin{align*}
		t^*\text{ch}(w)&=\text{ch}(t^*w)\\
					&=\text{ch}(2(H-1)\otimes\delta)\\
					&=2\text{ch}((H-1)\otimes\delta).
	\end{align*}
	It follows that $\text{ch}(w)$ is a generator of $H^3(G/T\times_\Gamma T, \mathbb{Z})$.
\end{proof}
\begin{remark}\label{H3gengen} By Propositions \ref{H3gen} and \ref{intcohgp}, we have that $H^3(G/T\times_\Gamma T^n, \mathbb{Z})$ is freely generated by $\text{ch}(w_i)=\pi_i^*\text{ch}(w)$, $1\leq i\leq n$.\end{remark}
\begin{proposition}\label{freepart}
	The $K$-theory classes $\{x_{ij}\}_{1\leq i, j\leq n}$ and $\{w_i\}_{i=1}^n$ satisfy the relations
	\begin{eqnarray*}
		x_{ij}+x_{ji}, x_{ii}, \{x_{ij}x_{k\ell}-\text{sgn}(\sigma) x_{\sigma(i)\sigma(j)}x_{\sigma(k)\sigma(l)}|\sigma\in S_4\},\\ \{w_iw_j\}_{1\leq i, j\leq n}, \{w_ix_{jk}-\text{sgn}(\sigma) w_{\sigma(i)}x_{\sigma(j)\sigma(k)}|\sigma\in S_3\}.
	\end{eqnarray*}
	The abelian group generated by 
	\begin{eqnarray*}1, \left\{x_{i_1i_2}\cdots x_{i_{2k-1}i_{2k}}\left|1\leq i_1<i_2<\cdots<i_{2k}\leq n, 1\leq k\leq \left\lfloor\frac{n}{2}\right\rfloor\right.\right\}, \\
	\{w_i| 1\leq i\leq n\}, \left\{w_ix_{j_1j_2}\cdots x_{j_{2k-1}j_{2k}}\left|1\leq i<j_1<j_2<\cdots<j_{2k}\leq n, 1\leq k\leq\left\lfloor\frac{n}{2}\right\rfloor\right.\right\}.
	\end{eqnarray*}
	is a free summand of $K^*(G/T\times_\Gamma T^n)$ which is isomorphic to $\mathbb{Z}^{2^n}$. 
	\end{proposition}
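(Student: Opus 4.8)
The plan is to transport everything to the double cover $t\colon G/T\times T^n\to G/T\times_\Gamma T^n$, whose total space $\widetilde X:=G/T\times T^n\cong S^2\times T^n$ has the transparent ring $K^*(\widetilde X)\cong\big(\mathbb Z[H]/((H-1)^2)\big)\otimes\bigwedge\nolimits^*_{\mathbb Z}(\delta_1,\dots,\delta_n)$. Writing $X:=G/T\times_\Gamma T^n$, I will use $t^*x_{ij}=\delta_i\delta_j$ together with Remark~\ref{doubling}, $t^*w_i=2(H-1)\otimes\delta_i$. Since $\Gamma$ acts freely on $\widetilde X$ (antipodally on $G/T\cong S^2$), $t$ is an honest double cover, and I will also use its transfer $t_!\colon K^*(\widetilde X)\to K^*(X)$, with $t^*t_!=1+\gamma^*$ and the projection formula $t_!(\alpha\cdot t^*\beta)=t_!(\alpha)\cdot\beta$. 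Because $\gamma^*(H-1)=-(H-1)$ and $\gamma^*\delta_i=-\delta_i$, the $\gamma^*$-invariant lattices in $K^*(\widetilde X)$ are $\Lambda_0:=\bigoplus_{|I|\ \mathrm{even}}\mathbb Z\,\delta_I$ in even degree and $\Lambda_1:=\bigoplus_{|I|\ \mathrm{odd}}\mathbb Z\,(H-1)\delta_I$ in odd degree, each of rank $2^{n-1}$.

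For the relations, $x_{ij}+x_{ji}=0$ and $x_{ii}=0$ are the defining conventions. Applying $t^*$ and reducing in $K^*(\widetilde X)$ (using $\delta_i^2=0$, $\delta_i\delta_j=-\delta_j\delta_i$ and $(H-1)^2=0$) sends every listed expression to $0$. For the odd-degree relations $w_ix_{jk}-\mathrm{sgn}(\sigma)\,w_{\sigma(i)}x_{\sigma(j)\sigma(k)}$ this already concludes the matter, since $t^*$ is injective on $K^{-1}(X)$ by Proposition~\ref{oddinject}. The relation $w_iw_j=0$ lies in $K^0(X)$, where $t^*$ is not injective, but it follows from the transfer: as $\gamma^*\big((H-1)\delta_i\big)=(H-1)\delta_i$ we get $t^*t_!\big((H-1)\delta_i\big)=2(H-1)\delta_i=t^*w_i$, hence $w_i=t_!\big((H-1)\delta_i\big)$ by Proposition~\ref{oddinject}, and the projection formula yields
\[
  w_iw_j=t_!\big((H-1)\delta_i\cdot t^*w_j\big)=2\,t_!\big((H-1)^2\delta_i\delta_j\big)=0 .
\]
The remaining even relations $x_{ij}x_{k\ell}-\mathrm{sgn}(\sigma)\,x_{\sigma(i)\sigma(j)}x_{\sigma(k)\sigma(\ell)}$ again land in $\ker(t^*|_{K^0})$, which is the torsion subgroup $M_n$; promoting them from ``torsion'' to ``zero'' is the one place where $t^*$ does not decide the issue, and I would argue it at the bundle level, deducing $x_{ij}^2=0$ from the $\Gamma$-equivariant isomorphism $\big((p_{ij}^*H)/\Gamma\big)^{\otimes2}\oplus\underline{\mathbb C}\cong\big((p_{ij}^*H)/\Gamma\big)^{\oplus2}$ (the trivial tensor square of these reduced line bundles) and then recovering the full $S_4$-antisymmetry from this and graded-commutativity of even classes.

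For the free summand, first count: the monomials number $\sum_k\binom{n}{2k}+n+\sum_k\binom{n}{2k+1}=\sum_{i=0}^n\binom ni=2^n$, split as $2^{n-1}$ even ones ($1$ and the $x$-products) and $2^{n-1}$ odd ones ($w_i$ and the $w_ix$-products). Under $t^*$ the even monomials go to $\{\delta_I:|I|\ \mathrm{even}\}$, a $\mathbb Z$-basis of $\Lambda_0$; since $\ker(t^*|_{K^0})=M_n$ while $t^*$ carries $K^0(X)$ into the invariants $\Lambda_0$, the subgroup they generate meets $M_n$ trivially and maps isomorphically onto $\Lambda_0\cong K^0(X)/M_n$, so by the splitting of the extension \eqref{extension} it is a free summand of $K^0(X)$ complementary to $M_n$. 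The odd monomials go to $\{2(H-1)\delta_I:|I|\ \mathrm{odd}\}$, which is a basis of $2\Lambda_1$.

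The crux is then the odd part: I must show these odd monomials span all of $K^{-1}(X)\cong\mathbb Z^{2^{n-1}}$, not merely a finite-index subgroup, i.e.\ that $t^*K^{-1}(X)=2\Lambda_1$ rather than something larger inside $\Lambda_1$. Here I would invoke the comparison of Atiyah--Hirzebruch spectral sequences already set up in Proposition~\ref{intcohgp}, for $t$ lying over the covering $S^2\to\mathbb{RP}^2$ of base spaces. The group $K^{-1}(X)$ is concentrated in filtration $2$, equal to $E_\infty^{2,-1}(X)=H^2(\mathbb{RP}^2;\underline{K^{-1}(T^n)})$, and $t^*$ on this line is the base map $H^2(\mathbb{RP}^2;\underline{\mathbb Z})\to H^2(S^2;\mathbb Z)$ tensored with an isomorphism on the fibre $K^{-1}(T^n)$. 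For the orientation double cover $S^2\to\mathbb{RP}^2$ this base map is multiplication by $2$ (integration against the twisted fundamental class), so $t^*\colon K^{-1}(X)\to E_\infty^{2,-1}(\widetilde X)=\Lambda_1$ is multiplication by $2$ with image exactly $2\Lambda_1$; combined with Proposition~\ref{oddinject} the odd monomials form a $\mathbb Z$-basis of $K^{-1}(X)$. Together with the even case this exhibits the $2^n$ monomials as a free summand $\cong\mathbb Z^{2^n}$ of $K^*(X)$ complementary to $M_n$. The main obstacle is precisely this factor-$2$ bookkeeping in odd degree: the ``$2$'' appearing in $t^*w_i=2(H-1)\delta_i$ must be shown to be the image of a \emph{generator} of $K^{-1}(X)$ via the twisted $H^2$-computation, since otherwise the doubled images would generate only a proper subgroup and the summand claim would fail; the secondary difficulty is the torsion-level vanishing of the pure-$x$ antisymmetry relations flagged above.
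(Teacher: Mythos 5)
Most of your proposal is sound, but there is one genuine gap: your treatment of the relations $x_{ij}x_{k\ell}=\text{sgn}(\sigma)\,x_{\sigma(i)\sigma(j)}x_{\sigma(k)\sigma(\ell)}$. The claimed $\Gamma$-equivariant isomorphism $\bigl((p_{ij}^*H)/\Gamma\bigr)^{\otimes 2}\oplus\underline{\mathbb{C}}\cong\bigl((p_{ij}^*H)/\Gamma\bigr)^{\oplus 2}$, i.e.\ $x_{ij}^2=0$, is false. The equivariant structure on $H$ (trivial on the fiber over the north pole, negation over the south pole) gives, by restricting the $S^1$-equivariant identity $(H_{S^1}-1)(H_{S^1}-t)=0$ to $\Gamma$, the relation $(H-1)^2=u(H-1)$ in $K^0_\Gamma(S^2)$, hence $x_{ij}^2=ux_{ij}$; this is exactly relation (4) of Theorem \ref{premainthm}, and $ux_{ij}$ is nonzero, being one of the $\mathbb{Z}_2$-basis vectors of the torsion subgroup $M_n\cong\mathbb{Z}_2^{2^n}$. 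Moreover, even if the squares did vanish, the $S_4$-antisymmetry for four \emph{distinct} indices (e.g.\ $x_{12}x_{34}=-x_{13}x_{24}$) relates genuinely different quadratic monomials and does not follow formally from $x_{ij}^2=0$, $x_{ij}+x_{ji}=0$ and commutativity: in $\mathbb{Z}[x_{ij}]/(x_{ij}^2,\,x_{ij}+x_{ji})$ these monomials are independent. You correctly observed that the discrepancy lies in $\ker(t^*|_{K^0})=M_n$, so some torsion-sensitive input is unavoidable; the paper supplies it not through squares but by working in $K^0_\Gamma(G/T\times T^n)\cong K^0(G/T\times_\Gamma T^n)$ and identifying the product $x_{ij}x_{k\ell}$ with the class obtained by descending the equivariant lift of $\delta_i\delta_j\delta_k\delta_\ell$, so that the exterior-algebra antisymmetry upstairs descends directly.

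The remainder of your argument is correct, and in two places takes a route genuinely different from the paper's. Your transfer proof of $w_iw_j=0$ --- establishing $w_i=t_!\bigl((H-1)\delta_i\bigr)$ via $t^*t_!=1+\gamma^*$ and Proposition \ref{oddinject}, then applying the projection formula and $(H-1)^2=0$ --- is a clean alternative to the paper's observation that $w_iw_j$ lies in $E_\infty^{4,-2}=H^4(\mathbb{RP}^2,\cdot)=0$ of the Atiyah--Hirzebruch spectral sequence. Your identification of the even monomials as a complement of $M_n=\ker(t^*|_{K^0})$, using that they map to a $\mathbb{Z}$-basis of the invariant lattice $\Lambda_0$ (in your notation), is also a tidier packaging than the paper's identification of $E_2^{0,0}$ via the Segal spectral sequence; note that this part is independent of the faulty relation argument above, so your proof of the free-summand claim survives. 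Finally, your odd-degree argument --- $K^{-1}$ is concentrated in $E_\infty^{2,-1}$ and the twisted pullback $H^2(\mathbb{RP}^2,\underline{\mathbb{Z}})\to H^2(S^2,\mathbb{Z})$ is multiplication by $2$, so the $w$-monomials, whose images are $2\Lambda_1$, hit a full basis of $K^{-1}(G/T\times_\Gamma T^n)$ --- is essentially the paper's own argument, and it correctly resolves the index-two subtlety you flagged as the crux.
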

\begin{proof}
	First, we shall prove the relations among the generators. Note that 
	\begin{align*}
		t^*(w_ix_{jk})&=t^*(w_i)t^*(x_{jk})\\
				   &=(2(H-1)\otimes \delta_i)(\delta_j\delta_k)\\
				   &=\text{sgn}(\sigma)\cdot 2(H-1)\otimes\delta_{\sigma(i)}\delta_{\sigma(j)}\delta_{\sigma(k)}\\
				   &=\text{sgn}(\sigma) t^*(w_{\sigma(i)})t^*(x_{\sigma(j)\sigma(k)})\\
				   &=\text{sgn}(\sigma) t^*(w_{\sigma(i)}x_{\sigma(j)\sigma(k)})
	\end{align*}
	By Proposition \ref{oddinject}, we get the relation $w_ix_{jk}=\text{sgn}(\sigma) w_{\sigma(i)}x_{\sigma(j)\sigma(k)}$. 
	
	The collapsing of the Atiyah-Hirzebruch spectral sequence on the $E_2$-page as shown in the proof of Proposition \ref{intcohgp} shows that $w_i$ comes from $E_\infty^{2, -1}=E_2^{2, -1}=H^2(\mathbb{RP}^2, \underline{\bigwedge_\mathbb{Z}^\text{odd}(\mathbb{Z}^{\oplus n})})$. The latter is isomorphic to the subgroup $K^{-1}(G/T\times_\Gamma T^n)$ because it fits into the following group extensions
	\begin{align*}
		0&\longrightarrow E_\infty^{2, -1}\longrightarrow N_n\longrightarrow E_\infty^{1, 0}\longrightarrow 0\\
		0&\longrightarrow N_n\longrightarrow K^{-1}(G/T\times_\Gamma T^n)\longrightarrow E_\infty^{0, -1}\longrightarrow 0
	\end{align*}
	and $E_\infty^{1, 0}=E_\infty^{0, -1}=0$. It follows that $w_iw_j$ comes from $E_\infty^{4, -2}=E_2^{4, -2}=H^4(\mathbb{RP}^2, {\bigwedge_\mathbb{Z}^\text{even}(\mathbb{Z}^{\oplus n})})=0$, which corresponds to the zero subring of $K^*(G/T\times_{\Gamma}T^n)$. So $w_iw_j$ is 0.
	
	By Definition \ref{varvectbund} and Proposition \ref{intcohgp}, $x_{ij}$ can be obtained by quotienting the virtual vector bundle represented by $\delta_i\delta_j\in K^0(G/T\times T^n)$ by the $\Gamma$-action. Thus $x_{ij}x_{k\ell}$ can be obtained from that represented by $\delta_i\delta_j\delta_k\delta_l$, and we have $\delta_i\delta_j\delta_k\delta_l=\text{sgn}(\sigma)\delta_{\sigma(i)}\delta_{\sigma(j)}\delta_{\sigma(k)}\delta_{\sigma(l)}$. Thus $x_{ij}x_{k\ell}=\text{sgn}(\sigma) x_{\sigma(i)\sigma(j)}x_{\sigma(k)\sigma(l)}$. The relations $x_{ij}+x_{ji}=0$ and $x_{ii}=0$ come from Definition \ref{varvectbund}. This proves the first part of the proposition. 
	
	Next, we shall prove the abelian group structure asserted in the proposition. Observe that the group $E_2^{0, 0}$ in the Atiyah-Hirzebruch spectral sequence, which is $H^0(\mathbb{RP}^2, K^0(T^n))$, can also be thought of as the group $E_2^{0, 0}$ in the Segal spectral sequence for the equivariant $K$-theory $K_\Gamma^*(G/T\times T^n)$ (cf. \cite[Remark after Proposition 5.3]{Se}). This group contains $H^0(\mathbb{RP}^2, \mathbb{Z}\delta_i\delta_j)\cong\mathbb{Z}$ as a subgroup, and its generator should be represented by any element in $K_\Gamma^0(G/T\times T^n)$ such that it restricts to $x_{ij, \Gamma}\in K_\Gamma^0(T^n)$ (by Definition \ref{varvectbund} and Proposition \ref{intcohgp}, $x_{ij, \Gamma}$ is a $\Gamma$-equivariant lift of $\delta_i\delta_j\in K^0(T^n)$) and $1\in K_\Gamma^0(G/T)\cong K^0(\mathbb{RP}^2)$. One such element is $x_{ij}\in K^0(G/T\times_\Gamma T^n)\cong K_\Gamma^0(G/T\times T^n)$ (cf. the definition of $x_{ij}$ in Definition \ref{varvectbund}). We may further argue in a similar fashion that in general, the generator of $H^0(\mathbb{RP}^2, \mathbb{Z}\delta_{i_1}\delta_{i_2}\cdots\delta_{i_{2k-1}}\delta_{i_{2k}})$ may be represented by $x_{i_1i_2}\cdots x_{i_{2k-1}i_{2k}}$. Thus $E_2^{0, 0}$ corresponds to the abelian subgroup of $K^*(G/T\times_\Gamma T^n)$ generated by 
	\begin{eqnarray*}1, \left\{x_{i_1i_2}\cdots x_{i_{2k-1}i_{2k}}\left|1\leq i_1<i_2<\cdots<i_{2k}\leq n, 1\leq k\leq \left\lfloor\frac{n}{2}\right\rfloor\right.\right\}.
	\end{eqnarray*}
	Similarly, the group $E_2^{2, -1}$, which is $H^2(\mathbb{RP}^2, \underline{K^{-1}(T^n)})$, contains $H^2(\mathbb{RP}^2, \underline{\mathbb{Z}\delta_i})$ as a subgroup. By considering $E_2^{2, -1}$ in the Atiyah-Hirzebruch spectral sequence for $K^*(G/T\times T^n)$, which is $H^2(G/T, K^{-1}(T^n))$, and the map $t^*: K^*(G/T\times_\Gamma T^n)\to K^*(G/T\times T^n)$, we have that $t^*$ induces the map
	\[H^2(\mathbb{RP}^2, \underline{\mathbb{Z}\delta_i})\to H^2(G/T, \mathbb{Z}\delta_i).\]
	Note that $H^2(G/T, \mathbb{Z})$ and $H^2(\mathbb{RP}^2, \underline{\mathbb{Z}})$ are both isomorphic to $\mathbb{Z}$ and generated by the (twisted) volume form of $G/T$ and $\mathbb{RP}^2$ respectively, and $t: G/T\to\mathbb{RP}^2$ is a double covering map. So the above map is the multiplication by 2 map. The generator of $H^2(G/T, \mathbb{Z}\delta_i)$ corresponds to the $K$-theory class $(H-1)\otimes\delta_i\in K^{-1}(G/T\times T^n)$. It follows that the generator of $H^2(\mathbb{RP}^2, \underline{\mathbb{Z}\delta_i})$ corresponds to $w_i\in K^{-1}(G/T\times_\Gamma T^n)$, which is the preimage of $2(H-1)\otimes\delta_i$ under $t^*$ by Remark \ref{doubling}. In general, using a similar analysis, $H^2(\mathbb{RP}^2, \underline{K^{-1}(T^n)})$ corresponds to the abelian subgroup in $K^{-1}(G/T\times_\Gamma T^n)$ generated by 
	\begin{eqnarray*}
	\{w_i| 1\leq i\leq n\}, \left\{w_ix_{j_1j_2}\cdots x_{j_{2k-1}j_{2k}}\left|1\leq i<j_1<j_2<\cdots<j_{2k}\leq n, 1\leq k\leq\left\lfloor\frac{n}{2}\right\rfloor\right.\right\}.
	\end{eqnarray*}
	Thus 
	the abelian group stated in the proposition is isomorphic 
	to $E_2^{0, 0}\oplus E_2^{2, -1}\cong E_\infty^{0, 0}\oplus E_\infty^{2, -1}$, which corresponds to the free summand of $K^*(G/T\times_\Gamma T^n)$ by the collapsing of the spectal sequence and the splitting of the extension (\ref{extension}) in the proof of Proposition \ref{intcohgp}.
\end{proof}
\begin{remark}
	Note that the generator of the subgroup $H^0(\mathbb{RP}^2, \mathbb{Z}\delta_i\delta_j)$ of $E^{0, 0}_2$ can also be represented by $x_{ij}+a$, where $a$ is a torsion $K$-theory class coming from $M_n$ in the split short exact sequence (\ref{extension}) in the proof of Proposition \ref{intcohgp}. In fact, a choice of the representative for the generator corresponds to a splitting of (\ref{extension}). 
\end{remark}
The $K$-theory classes in Proposition \ref{freepart} are free generators of $K^*(G/T\times_\Gamma T^n)$. There are also other generators which are defined below and will be shown to give rise to the torsion part of $K^*(G/T\times_\Gamma T^n).$
\begin{definition}\label{varvectbund}
	Let $u_\Gamma\in K^0_\Gamma(T)$ be the image of the reduced nontrivial one dimensional complex representation of $\Gamma$ under the pullback map $K^*_\Gamma(\text{pt})\to K^*_\Gamma(T)$. Let $u$ be the $K$-theory class of $\mathbb{RP}^2$ representing the reduced vector bundle $G/T\times_\Gamma\mathbb{C}_1-\mathbb{RP}^2\times\mathbb{C}$, where $\mathbb{C}_1$ is the nontrivial complex 1-dimensional representation of $\Gamma$. By abuse of notation, we also let $u$ be the $K$-theory class of $G/T\times_\Gamma T^n$ representing the pullback of the said reduced vector bundle through the projection map $\pi$. 
	
	Let $v_\Gamma\in K_\Gamma^0(T)$ be the $K$-theory class of the reduced $\Gamma$-equivariant vector bundle on $T$, $E_1-E_0$, where $E_n$ is the trivial line bundle $T\times\mathbb{C}$ with the $\Gamma$-action being
	\[(e^{i\theta}, z)\mapsto (e^{-i\theta}, e^{in\theta}z).\]
	Let $v_i\in K^0(G/T\times_\Gamma T^n)$ be the $K$-theory class which corresponds to $\pi_i^*(E_1-E_0)\in K_\Gamma^0(G/T\times T^n)$ through the isomorphism $K^*(G/T\times_\Gamma T^n)\cong K^*_\Gamma(G/T\times T^n)$. Thus $v_i=\pi_i^*(E_1-E_0)/\Gamma$. Similarly, we let $v_{i, \Gamma}\in K_\Gamma^0(T^n)$ be $\pi_i^*v_\Gamma$.
		
	
\end{definition}
\begin{proposition}\label{equivKT}
	The equivariant $K$-theory ring $K^0_\Gamma(T)$ is isomorphic to 
	\[\mathbb{Z}[u_\Gamma, v_\Gamma]/(u_\Gamma(u_\Gamma+2), v_\Gamma(v_\Gamma+2), v_\Gamma(u_\Gamma+2)).\]
\end{proposition}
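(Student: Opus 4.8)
The plan is to compute $K^0_\Gamma(T)$ directly from the equivariant structure, where $T \cong S^1$ with $\Gamma \cong \mathbb{Z}_2$ acting by reflection, and then verify that the five explicit generators $u_\Gamma, v_\Gamma$ together with the stated relations exhaust the ring. The reflection action on the circle has two fixed points (namely $\pm 1 \in T$), so $T$ is $\Gamma$-homeomorphic to a circle split by the two fixed points into two arcs that are swapped by $\gamma$. I would exploit this decomposition: let $T^\gamma = \{+1, -1\}$ be the fixed locus and let $U$ be the (free) complement. The first key step is to set up the long exact sequence of the pair, or equivalently a Mayer--Vietoris/cofiber sequence, relating $K^*_\Gamma(T)$ to $K^*_\Gamma(T^\gamma) = R(\Gamma) \oplus R(\Gamma)$ and to the $K$-theory of the free part, which by the change-of-groups isomorphism reduces to ordinary $K$-theory of the quotient.

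The concrete computation would proceed as follows. First I would identify the two classes: $u_\Gamma$ is pulled back from a point, so it is $\mathbb{C}_1 - 1 \in R(\Gamma)$ regarded in $K^0_\Gamma(T)$, and the relation $u_\Gamma(u_\Gamma + 2) = 0$ is forced because in $R(\Gamma) = \mathbb{Z}[\mathbb{C}_1]/(\mathbb{C}_1^2 - 1)$ we have $(\mathbb{C}_1 - 1)(\mathbb{C}_1 + 1) = \mathbb{C}_1^2 - 1 = 0$, i.e. $u_\Gamma(u_\Gamma + 2) = 0$. Second, the class $v_\Gamma = E_1 - E_0$ is built from the equivariant line bundle $E_1$ with its $\theta$-dependent twist $(e^{i\theta}, z) \mapsto (e^{-i\theta}, e^{i\theta}z)$; here the idea is that $E_1$ becomes trivial after forgetting equivariance (it is a trivial bundle over $S^1$), so $v_\Gamma$ lies in the kernel of the forgetful map to $K^0(T) \cong \mathbb{Z}$, and I would pin down its restrictions to the two fixed points to detect it inside $R(\Gamma) \oplus R(\Gamma)$. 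Computing these fiber representations at $\pm 1$ is what produces the relations $v_\Gamma(v_\Gamma + 2) = 0$ and the mixed relation $v_\Gamma(u_\Gamma + 2) = 0$.

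The main obstacle, I expect, will be the mixed relation $v_\Gamma(u_\Gamma + 2) = 0$ and showing that no further relations are needed, i.e. that the natural map from the abstract ring $\mathbb{Z}[u_\Gamma, v_\Gamma]/(u_\Gamma(u_\Gamma + 2), v_\Gamma(v_\Gamma+2), v_\Gamma(u_\Gamma+2))$ onto $K^0_\Gamma(T)$ is an isomorphism and not merely a surjection. The relations themselves are verified by restriction-to-fixed-points together with the forgetful map, since the combined map
\[
K^0_\Gamma(T) \longrightarrow K^0_\Gamma(T^\gamma) \oplus K^0(T) = \bigl(R(\Gamma) \oplus R(\Gamma)\bigr) \oplus \mathbb{Z}
\]
should be injective after tracking the free part, so a product of classes vanishes in $K^0_\Gamma(T)$ once it vanishes on both the fixed points and the underlying nonequivariant bundle. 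For the harder direction I would compute the additive structure of $K^0_\Gamma(T)$ via the cofiber sequence above, check that $1, u_\Gamma, v_\Gamma, u_\Gamma v_\Gamma$ (subject to the relations) span it as a group of the correct rank and torsion, and conclude by comparing the presentation's underlying abelian group with this computation. Keeping careful track of the localization/Mayer--Vietoris boundary map, and confirming that the candidate monomials are $\mathbb{Z}$-linearly independent rather than over-counting, will be the delicate bookkeeping at the heart of the argument.
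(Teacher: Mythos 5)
Your proposal follows essentially the same route as the paper's proof: both localize to the fixed-point set $A=\{\pm 1\}$ via the long exact sequence of the pair $(T,A)$, establish injectivity of $i^*\colon K^0_\Gamma(T)\to R(\Gamma)\oplus R(\Gamma)$ by showing the relative term vanishes (the paper via triviality of bundles on the figure eight $T/A$, you via the change-of-groups isomorphism on the free complement), read off the three relations from the fiber representations at the two fixed points together with injectivity, and compare additive structures to rule out further relations. The only differences are cosmetic; in particular, your extra forgetful-map factor $K^0(T)$ in the detection map is redundant once $i^*$ alone is known to be injective.
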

\begin{proof}
	Consider the long exact sequence of equivariant $K$-theory groups associated with the pair $(T, A)$
	\[\cdots\longrightarrow K_\Gamma^0(T, A)\longrightarrow K_\Gamma^0(T)\stackrel{i^*}{\longrightarrow} K_\Gamma^0(A)\longrightarrow \cdots.\]
	By definition, the group $K_\Gamma^0(T, A)$ is $\widetilde{K}_\Gamma^0(T/A)$. Here $T/A$ is homeomorphic to a figure eight, i.e. the wedge sum of two circles, which are swapped by $\gamma$. Since any (ordinary) complex vector bundles over a circle is trivial, the same is true of any complex vector bundle over a figure eight. It follows that any $\Gamma$-equivariant complex vector bundle over a figure eight is the pullback of a $\Gamma$-representation over the point of contact of the two circles. Thus $K_\Gamma^0(T/A)\cong R(\Gamma)$ and $\widetilde{K}_\Gamma^0(T/A)=0$. So the restriction map $i^*: K_\Gamma^0(T)\to K_\Gamma^0(A)=K_\Gamma^0(\{1\})\oplus K_\Gamma^0(\{-1\})\cong R(\Gamma)\oplus R(\Gamma)$ is injective. Let $c\in R(\Gamma)$ be the nontrivial 1-dimensional representation of $\Gamma$. Then $i^*(1)=(1, 1)$, $i^*(1+u_\Gamma)=(c, c)$ and $i^*(1+v_\Gamma)=(1, c)$ by Definition \ref{varvectbund}. The image of $i^*$ is the set of pairs of virtual $\Gamma$-representations of equal virtual dimensions $\{(k_1+k_2c, k_1+k_3+(k_2-k_3)c)\in R(\Gamma)\oplus R(\Gamma)|\ k_1, k_2, k_3\in\mathbb{Z}\}$: on the one hand, as $T$ is connected, the virtual dimensions of the $\Gamma$-representations over the two fixed points in the image of $i^*$ are the same. On the other hand, we have 
	\[i^*(k_1+k_2+k_2u_\Gamma-k_3v_\Gamma)=(k_1+k_2c, k_1+k_3+(k_2-k_3)c).\]
Thus $K_\Gamma^0(T)$ is generated by $u_\Gamma$ and $v_\Gamma$. The three relations among $u_\Gamma$ and $v_\Gamma$ can be obtained by passing them to $R(\Gamma)\oplus R(\Gamma)$ by $q$. For example, we observe that
\[i^*(v_\Gamma(u_\Gamma+2))=(0, c-1)\cdot(c+1, c+1)=(0, c^2-1)=(0, 0).\]
By the injectivity of $i^*$, we have $v_\Gamma(u_\Gamma+2)=0$.
\end{proof}
Before proving the theorem about the $K$-theory ring structure of the blowup $G/T\times_\Gamma T$, we first state the following useful lemma on classifying equivariant line bundles and the map sending a vector bundle to its determinant line bundle.
\begin{lemma}(\cite[Theorem A.1 and Lemma A.2]{HL})\label{HLlemma}
	Let $G$ be a compact Lie group acting on a compact manifold $M$. 
	\begin{enumerate}
		\item\label{equivPicard} The equivariant Picard group of isomorphism classes equivariant complex $G$-line bundles over $M$ is isomorphic to the equivariant cohomology group $H_G^2(M, \mathbb{Z})$ through the equivariant first Chern class map.
		\item\label{det} Let $\det: \text{Vect}_G(M)\to H_G^2(M, \mathbb{Z})$ be the map taking the isomorphism class of an equivariant complex $G$-vector bundle to the equivariant first Chern class of its determinant line bundle (or, equivalently, the isomorphism class of its determinant line bundle by virtue of the above). Then it descends to the map $\det: K_G^0(M)\to H_G^2(M, \mathbb{Z})$ satisfying the following properties: for equivariant (virtual) vector bundles $V$ and $W$, $\det(V\pm W)=\det(V)\otimes\det(W)^{\otimes\pm 1}$ and $\det(V\otimes W)=\det(V)^{\otimes \text{rank}W}\otimes \det(W)^{\otimes\text{rank}V}$. 
	\end{enumerate}
\end{lemma}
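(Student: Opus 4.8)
The plan is to handle the two parts separately: part (\ref{equivPicard}) carries the real content, while part (\ref{det}) is a formal consequence once the determinant construction is set up. Throughout I would work with the Borel construction $M_G:=EG\times_G M$, whose ordinary cohomology computes $H_G^*(M;\mathbb{Z})$ by definition.

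For part (\ref{equivPicard}), I would first note that the equivariant first Chern class $c_1^G\colon \mathrm{Pic}_G(M)\to H_G^2(M;\mathbb{Z})$ is a homomorphism from the group of isomorphism classes of equivariant line bundles under tensor product, since $c_1$ is additive under tensor products. To prove it is an isomorphism, the key step is a natural bijection between isomorphism classes of $G$-equivariant line bundles on $M$ and isomorphism classes of ordinary line bundles on $M_G$, implemented by $L\mapsto EG\times_G L$; under this bijection $c_1^G$ corresponds to the ordinary $c_1$ on $M_G$, and the classical fact that $c_1\colon \mathrm{Pic}(M_G)\xrightarrow{\sim}H^2(M_G;\mathbb{Z})$ (because $\mathbb{CP}^\infty$ is a $K(\mathbb{Z},2)$) then concludes the argument.

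I expect this bijection to be the main obstacle, since $M_G$ is infinite-dimensional. The standard way around it is to filter $EG$ by finite-dimensional, increasingly highly connected $G$-manifolds $E_jG$, set $M_j:=E_jG\times_G M$, and identify a line bundle on $M_G$ with a compatible system of line bundles on the $M_j$. Compactness of $G$ ensures that equivariant bundles behave well under pullback, and the rising connectivity of the $E_jG$ together with a Milnor $\lim^1$ argument shows nothing is lost in the limit, giving both surjectivity (every line bundle on $M_G$ descends to an equivariant line bundle on $M$) and injectivity of the correspondence.

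For part (\ref{det}), I would set $\det V:=\Lambda^{\text{top}}V$ for an honest equivariant vector bundle $V$, an equivariant line bundle, and then postcompose with $c_1^G$. Additivity $\det(V\oplus W)\cong\det V\otimes\det W$ follows from $\Lambda^{\text{top}}(V\oplus W)\cong\Lambda^{\text{top}}V\otimes\Lambda^{\text{top}}W$, so the induced map on $\text{Vect}_G(M)$ is additive and descends to the Grothendieck group $K_G^0(M)$ by its universal property; extending to formal differences forces $\det(V-W)=\det(V)\otimes\det(W)^{\otimes-1}$, i.e. the stated $\det(V\pm W)=\det(V)\otimes\det(W)^{\otimes\pm 1}$. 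The tensor-product identity comes fiberwise from the natural (hence $G$-equivariant) linear-algebra isomorphism $\Lambda^{\text{top}}(V\otimes W)\cong(\Lambda^{\text{top}}V)^{\otimes\text{rank}\,W}\otimes(\Lambda^{\text{top}}W)^{\otimes\text{rank}\,V}$; passing through part (\ref{equivPicard}) to turn tensor products of line bundles into sums in $H_G^2(M;\mathbb{Z})$ then yields all the asserted formulas.
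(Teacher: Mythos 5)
The paper itself gives no proof of this lemma: it is quoted directly from \cite[Theorem A.1 and Lemma A.2]{HL}, so your proposal can only be measured against the argument given there. Your treatment of part (2) is correct and is the standard formal argument: define $\det$ as the top exterior power, observe additivity under $\oplus$, invoke the universal property of the Grothendieck group (valid here since $K_G^0(M)$ is the Grothendieck group of $\mathrm{Vect}_G(M)$ for compact $M$ and compact $G$), and obtain the tensor formula from the natural, hence equivariant, fiberwise isomorphism $\Lambda^{\mathrm{top}}(V\otimes W)\cong(\Lambda^{\mathrm{top}}V)^{\otimes\mathrm{rank}\,W}\otimes(\Lambda^{\mathrm{top}}W)^{\otimes\mathrm{rank}\,V}$, extended to virtual classes by bi-additivity.

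Part (1), however, has a genuine gap, and it sits exactly at the step you yourself flag as ``the key step.'' The bijection $\mathrm{Pic}_G(M)\cong\mathrm{Pic}(M_G)$ is not an auxiliary fact that finite-dimensional approximation can dispose of: it \emph{is} the theorem. Your scheme ($M_j=E_jG\times_G M$, rising connectivity, Milnor $\lim^1$) correctly identifies $\mathrm{Pic}(M_G)$ with $\mathrm{Pic}(M_j)$ for large $j$ --- for line bundles one does not even need $\lim^1$, since $H^2(M_G;\mathbb{Z})\to H^2(M_j;\mathbb{Z})$ is already an isomorphism --- but it says nothing about the passage between $\mathrm{Pic}(M_j)$ and $\mathrm{Pic}_G(M)$, which is where all the difficulty lives. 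A line bundle on $M_j$ pulls back to a $G$-equivariant line bundle on $E_jG\times M$, but there is no evident way to descend it to $M$: the projection $E_jG\times M\to M$ is a $G$-map and a non-equivariant homotopy equivalence, yet it is \emph{not} a $G$-homotopy equivalence (its source has empty $H$-fixed points for every nontrivial $H$, while $M^H$ may be nonempty), restricting to a slice $\{e\}\times M$ destroys the $G$-structure because that slice is not $G$-invariant, and an equivariant section $M\to E_jG\times M$ would require a $G$-map $M\to E_jG$, which exists only when the action on $M$ is free. That the descent nevertheless works for line bundles is a nontrivial lifting theorem --- due to Hattori and Yoshida, and recovered by Lashof--May--Segal's classification of equivariant bundles with abelian structure group --- whose proof uses the abelianness of $U(1)$ in an essential way; this is the input on which the proof of Theorem A.1 in \cite{HL} rests. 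The red flag in your write-up is that nothing in it is special to line bundles: run verbatim for rank-$n$ bundles, or stably, your argument would ``prove'' $K_G^0(M)\cong K^0(M_G)$, which is false --- by Atiyah--Segal that comparison map is an $I$-adic completion, e.g. $R(S^1)=\mathbb{Z}[t,t^{-1}]\not\cong\mathbb{Z}[[u]]=K^0(\mathbb{CP}^\infty)$ already for $M$ a point. So ``compactness plus connectivity plus $\lim^1$'' cannot suffice; the abelian-structure-group input must enter somewhere, and in your proposal it never does.
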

\begin{theorem}\label{premainthm}
	Additively, $K^0(G/T\times_\Gamma T^n)$ is isomorphic to $\mathbb{Z}^{2^{n-1}}\oplus\mathbb{Z}_2^{2^n}$ and generated by 
	\begin{eqnarray*}1, \left\{x_{i_1i_2}\cdots x_{i_{2k-1}i_{2k}}\left|1\leq i_1<i_2<\cdots<i_{2k}\leq n, 1\leq k\leq \left\lfloor\frac{n}{2}\right\rfloor\right.\right\}, \\ u, \{v_i|1\leq i\leq n\}, \left\{ux_{i_1i_2}\cdots x_{i_{2k-1}i_{2k}}\left| 1\leq i_1<i_2<\cdots< i_{2k}\leq n, 1\leq k\leq\left\lfloor\frac{n}{2}\right\rfloor\right.\right\}, \\ \left\{v_ix_{j_1j_2}\cdots x_{j_{2k-1}j_{2k}}\left|1\leq i<j_1<j_2<\cdots<j_{2k}, 1\leq k\leq \left\lfloor\frac{n-1}{2}\right\rfloor\right.\right\}, \end{eqnarray*}
	while $K^{-1}(G/T\times_\Gamma T^n)$ is isomorphic to $\mathbb{Z}^{2^{n-1}}$ and generated by 
	\begin{eqnarray*}\{w_i| 1\leq i\leq n\}, \left\{w_ix_{j_1j_2}\cdots x_{j_{2k-1}j_{2k}}\left|1\leq i<j_1<j_2<\cdots<j_{2k}\leq n, 1\leq k\leq\left\lfloor\frac{n}{2}\right\rfloor\right.\right\}.\end{eqnarray*}
	Moreover, we have the following list of relations in addition to those in Proposition \ref{freepart}.
	\begin{enumerate}
		\item $2u=2v_i=0$ for all $1\leq i\leq n$, 
		\item $u^2=uv_i=v_iv_j=0$ for all $1\leq i, j\leq n$, 
		\item $uw_i=v_iw_j=0$ for all $1\leq i, j\leq n$, 
		\item $x_{ij}^2=ux_{ij}$ for all $1\leq i<j\leq n$, and
		\item $ux_{ij}^2=v_ix_{jk}^2=ux_{ij}\cdot x_{jk}=v_ix_{jk}\cdot x_{k\ell}=0$.
	\end{enumerate}
	
\end{theorem}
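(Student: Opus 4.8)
The plan is to leverage the additive information already assembled. By Proposition~\ref{intcohgp} all torsion of $K^0(G/T\times_\Gamma T^n)$ sits in the group $M_n$ of order $2^{2^n}$ appearing in the extension~(\ref{extension2}), whose outer terms $E_\infty^{2,0}$ and $E_\infty^{1,-1}$ are elementary abelian of rank $2^{n-1}$ each. Thus it is enough to show that $M_n$ is killed by $2$: an abelian $2$-group of order $2^{2^n}$ annihilated by $2$ must be $\mathbb{Z}_2^{2^n}$, and then~(\ref{extension2}) splits. Since Proposition~\ref{freepart} already produces the free summand $\mathbb{Z}^{2^{n-1}}$ in each degree, the additive claim reduces to exhibiting $2^n$ explicit $2$-torsion classes whose symbols form bases of $E_\infty^{2,0}$ and $E_\infty^{1,-1}$.

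First I would record $2u=0$, as $u=\pi^*\tilde u$ with $\tilde u$ the generator of $\widetilde K^0(\mathbb{RP}^2)\cong\mathbb{Z}_2$. To get $2v_i=0$ I would reduce by naturality to $n=1$, writing $v_i=\pi_i^*v$, and compute $K^0(G/T\times_\Gamma T)$ by hand. Viewing $G/T\times_\Gamma T$ as the double mapping cylinder of the two covers $S^2\to\mathbb{RP}^2$ sitting over the $\Gamma$-fixed points $\pm1\in T$, a Mayer--Vietoris sequence built from the two singular fibres $G/T\times_\Gamma\{\pm1\}\cong\mathbb{RP}^2$ gives $K^0(G/T\times_\Gamma T)\cong\mathbb{Z}\oplus\mathbb{Z}_2^2$, the two torsion summands being detected by restriction to those fibres. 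Tracking the generators shows $u\mapsto(\mathrm{gen},\mathrm{gen})$ while $v\mapsto(0,\mathrm{gen})$, using that $v_\Gamma$ restricts to $0$ at $+1$ and to $c-1$ at $-1$; hence $M_1=\langle u,v\rangle\cong\mathbb{Z}_2^2$, and in particular $2v=0$. Pulling back yields $2v_i=0$, so every product of $u$ or a $v_i$ with an $x$-monomial is $2$-torsion.

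I would then match these classes to the associated graded exactly as in the proof of Proposition~\ref{freepart}: the classes $u\,x_{i_1i_2}\cdots x_{i_{2k-1}i_{2k}}$ lie in filtration $2$ and their symbols exhaust the basis $\{\delta_{i_1}\cdots\delta_{i_{2k}}\}$ of $\bigwedge^{\mathrm{even}}$, so they generate $E_\infty^{2,0}$; the classes $v_i\,x_{j_1j_2}\cdots x_{j_{2k-1}j_{2k}}$ lie in filtration $1$ (each $v_i$ restricts to $0$ on the fibre $T^n$ yet has nonzero symbol in $E_\infty^{1,-1}$, by the $n=1$ computation) and their symbols exhaust the basis of $\bigwedge^{\mathrm{odd}}$, so they generate $E_\infty^{1,-1}$. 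As these $v$-type lifts are $2$-torsion, they split~(\ref{extension2}), giving $M_n\cong\mathbb{Z}_2^{2^n}$ with the listed classes as a basis; $K^{-1}$ is free and already covered by Proposition~\ref{freepart}. For the relations, those free of torsion are in Proposition~\ref{freepart}, while $u^2=uv_i=v_i^2=0$ follow by pulling the relations $u_\Gamma(u_\Gamma+2)=v_\Gamma(v_\Gamma+2)=v_\Gamma(u_\Gamma+2)=0$ of Proposition~\ref{equivKT} back along $K_\Gamma^0(T)\to K^0(G/T\times_\Gamma T^n)$ and simplifying with $2u=2v_i=0$. The vanishing products $uw_i=0$ and $v_iw_j=0$, and part of~(5), are then forced by filtration degree, since $\mathbb{RP}^2$ is $2$-dimensional and any class landing in filtration $\ge3$ is $0$; the remaining cases of~(5) reduce to~(2) and~(4).

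The genuinely delicate point, which I expect to be the main obstacle, is computing the torsion products whose leading symbols vanish and which are therefore invisible to the covering pullback $t^*$ (that map annihilates all torsion). These are the square $x_{ij}^2$ in~(4) and the mixed products of the $v_i$, realized as multiplicative extensions in the collapsed spectral sequence. I would determine them from the ring structure of the $E_\infty$-page—equivalently from the cup product $H^1(\mathbb{RP}^2,\mathbb{Z}_{\mathrm{tw}})\otimes H^1(\mathbb{RP}^2,\mathbb{Z}_{\mathrm{tw}})\to H^2(\mathbb{RP}^2,\mathbb{Z})$ paired with the wedge multiplication on the coefficient ring $\bigwedge^*(\delta_i)$—or, more concretely, by restricting the defining equivariant bundles to the singular fibres $G/T\times_\Gamma\{a\}$ and computing there. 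This one computation fixes the precise form of the remaining entries of~(2) and~(4); everything else is a formal consequence of the results established above.
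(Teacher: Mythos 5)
Your additive argument and the easy relations do go through, and the route you take to the key input is genuinely different from the paper's: where the paper identifies $v_i$ with the generator of $H^1(\mathbb{RP}^2,\underline{\mathbb{Z}\delta_i})$ and proves $2v_i=0$ via equivariant Picard groups, the determinant-line-bundle map of Lemma \ref{HLlemma}, and a five-lemma chase on the diagram (\ref{torsioniso}), you obtain the same facts for $n=1$ from a Mayer--Vietoris sequence for the double-mapping-cylinder decomposition of $G/T\times_\Gamma T$ together with restriction to the two singular fibres, and then pull back. That is more elementary and correct (injectivity of $K^0(G/T\times_\Gamma T)\to K^0(\mathbb{RP}^2)^{\oplus 2}$ follows from $K^{-1}(S^2)=K^{-1}(\mathbb{RP}^2)=0$, and $u\mapsto(\tilde u,\tilde u)$, $v\mapsto(0,\tilde u)$ pins down $M_1$); relations (1), (3), and the $u^2=uv_i=v_i^2=0$ part of (2) are handled as in the paper.

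The genuine gap is exactly the part you deferred, and neither of your two proposed methods can close it. Restriction to the singular fibres is blind to these products: by Proposition \ref{restriction}, $i_a^*$ of any product of two of the classes $u,v_k,x_{k\ell}$ is a multiple of $u^2=0$ in $K^0(\mathbb{RP}^2)$, so $0$, $ux_{ij}$, and $x_{ij}^2$ all restrict to zero on every fibre and cannot be distinguished there. The $E_\infty$-ring is equally silent about relation (4): the symbol of $x_{ij}$ lies in $E_\infty^{0,0}$ and $(\delta_i\delta_j)^2=0$ in the exterior algebra, so multiplicativity of the collapsed spectral sequence only says $x_{ij}^2$ has positive filtration; deciding between $0$ and $ux_{ij}$ is precisely a multiplicative extension problem invisible on the $E_\infty$-page. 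The paper needs a new idea at exactly this point, and it is the one missing from your proposal: embed $\Gamma$ in the rotation circle $S^1$ acting on $S^2$, use injectivity of restriction to the fixed points to get $(H_{S^1}-1)(H_{S^1}-t)=0$ in $K^*_{S^1}(S^2)$, restrict the group to $\Gamma$ to obtain $(H-1)(H-(1+u))=0$ in $K^*_\Gamma(S^2)$, and pull back along $p_{ij}$.

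Worse, for the mixed products your spectral-sequence route, carried out honestly, contradicts the relation you set out to prove. The generator $e$ of $H^1(\mathbb{RP}^2,\underline{\mathbb{Z}})$ has \emph{nonzero} cup square: its mod $2$ reduction is the generator $a$ of $H^1(\mathbb{RP}^2,\mathbb{Z}_2)$, $a^2\neq 0$, and $H^2(\mathbb{RP}^2,\mathbb{Z})\to H^2(\mathbb{RP}^2,\mathbb{Z}_2)$ is injective. Since $v_i$ and $v_j$ have nonzero symbols in filtration $1$ and the filtration terminates at $2$, multiplicativity then forces $v_iv_j=ux_{ij}\neq 0$ rather than $v_iv_j=0$. (This computation is also in tension with the paper's own justification of $v_iv_j=0$, which rests on the claim that $G/T\times_\Gamma(\mathbb{R}_1\oplus\mathbb{R}_1)$ is the trivial rank-$2$ bundle over $\mathbb{RP}^2$; that bundle is $\gamma\oplus\gamma$, which has $w_2=a^2\neq 0$, so it is not trivial.) In short: your plan cannot be completed as written --- relation (4) requires the $S^1$-equivariant argument you do not have, and the $v_iv_j$ computation must be confronted head-on rather than deferred, since the method you name for it yields a different answer from the one in the statement.
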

\begin{remark}
	We have yet to figure out the product $x_{ij}x_{jk}$ which is missing from the above list of relations among the generators, but this is not necessary for the description of the $K$-theory and cohomology ring structure of $Y_n$ to be presented later on.
\end{remark}
\begin{proof}
	First, we would like to show that the generators of $H^2(\mathbb{RP}^2, \mathbb{Z}\cdot 1)$ and $H^1(\mathbb{RP}^2, \underline{\mathbb{Z}\delta_i})$, which are subgroups of $E_2^{2, 0}$- and $E_2^{1, -1}$-pages respectively of the Atiyah-Hirzebruch spectral sequence in the proof of Proposition \ref{intcohgp}, can be represented by $u$ and $v_i$ respectively. 
	\begin{claim}
		The generator of $H^2(\mathbb{RP}^2, \mathbb{Z}\cdot 1)$ as a subgroup of the $E_2^{2, 0}$-page of the spectral sequence corresponds to $u$.
	\end{claim}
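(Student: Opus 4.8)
The plan is to exploit the naturality of the Atiyah--Hirzebruch spectral sequence (AHSS) under the projection $\pi\colon G/T\times_\Gamma T^n\to\mathbb{RP}^2$, together with the fact that $u$ is by definition pulled back from $\mathbb{RP}^2$. Write $u=\pi^*u'$, where $u'\in\widetilde{K}^0(\mathbb{RP}^2)$ is the reduced class of the line bundle $G/T\times_\Gamma\mathbb{C}_1$. First I would regard $\mathbb{RP}^2$ as the trivial fibration $\mathrm{pt}\hookrightarrow\mathbb{RP}^2\to\mathbb{RP}^2$ and view $\pi$ as a morphism of fibrations covering the identity on the base, with fiber map $T^n\to\mathrm{pt}$. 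This induces a morphism from the AHSS computing $K^*(\mathbb{RP}^2)$ to the AHSS of Proposition \ref{intcohgp}; on $E_2$-pages it is the map $H^p(\mathbb{RP}^2,K^q(\mathrm{pt}))\to H^p(\mathbb{RP}^2,\underline{K^q(T^n)})$ induced by the unit $\mathbb{Z}=K^0(\mathrm{pt})\to K^0(T^n)$, $1\mapsto 1$. In bidegree $(2,0)$ this is precisely the inclusion of $H^2(\mathbb{RP}^2,\mathbb{Z})$ onto the summand $H^2(\mathbb{RP}^2,\mathbb{Z}\cdot 1)$.

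Next I would record the structure of the source spectral sequence. Since $K^q(\mathrm{pt})$ vanishes for $q$ odd and equals $\mathbb{Z}$ for $q$ even, the only nonzero groups contributing to $K^0(\mathbb{RP}^2)$ are $E_2^{0,0}=\mathbb{Z}$ and $E_2^{2,0}=H^2(\mathbb{RP}^2,\mathbb{Z})\cong\mathbb{Z}_2$, and there is no room for differentials, so the sequence collapses. Because $\widetilde{K}^0(\mathbb{RP}^1)=\widetilde{K}^0(S^1)=0$, every reduced class on $\mathbb{RP}^2$ restricts trivially to the $1$-skeleton and hence lies in skeletal filtration $2$; thus the edge identification gives $\widetilde{K}^0(\mathbb{RP}^2)=E_\infty^{2,0}=E_2^{2,0}\cong\mathbb{Z}_2$. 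It remains to see that $u'$ is the nonzero element here, and this is the step I expect to be the main obstacle. One uses that $G/T\times_\Gamma\mathbb{C}_1=S^2\times_\Gamma\mathbb{C}_1$ is the complexification of the tautological real line bundle over $\mathbb{RP}^2$ (as $\mathbb{C}_1$ is the complexified sign representation of $\Gamma$), whose first Chern class is the generator of $H^2(\mathbb{RP}^2,\mathbb{Z})\cong\mathbb{Z}_2$; equivalently, $u'$ is the standard generator of $\widetilde{K}^0(\mathbb{RP}^2)\cong\mathbb{Z}_2$ (cf.\ the well-known computation $\widetilde{K}^0(\mathbb{RP}^{2m})\cong\mathbb{Z}_{2^m}$). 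Hence $[u']$ generates $E_\infty^{2,0}$ of the source.

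Finally I would assemble the pieces. Since $\pi^*$ respects the skeletal filtrations, $u=\pi^*u'$ lies in filtration $2$ of the target AHSS; indeed $u|_{\pi^{-1}(\mathbb{RP}^1)}=\pi^*(u'|_{\mathbb{RP}^1})=0$, while filtration $3$ is trivial because $\mathbb{RP}^2$ is the whole base. Therefore the class of $u$ in $E_\infty^{2,0}(G/T\times_\Gamma T^n)=F^2/F^3$ is the image of $[u']$ under the morphism of spectral sequences, and by the first paragraph this image is the generator of the summand $H^2(\mathbb{RP}^2,\mathbb{Z}\cdot 1)$. This proves the claim. The only genuinely nontrivial input is the nonvanishing of $u'$ in $\widetilde{K}^0(\mathbb{RP}^2)$; everything else is naturality together with the collapse already established in Proposition \ref{intcohgp}.
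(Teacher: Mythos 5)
Your proof is correct and follows essentially the same route as the paper: both identify $u'=[G/T\times_\Gamma\mathbb{C}_1]-1$ as the generator of $\widetilde{K}^0(\mathbb{RP}^2)\cong\mathbb{Z}_2$ (the paper by citing Atiyah's computation, you by the complexified tautological bundle and the collapse/edge argument) and then transport it to $H^2(\mathbb{RP}^2,\mathbb{Z}\cdot 1)$ via naturality of the Atiyah--Hirzebruch spectral sequence under $\pi^*$. Your write-up merely makes explicit the filtration and edge-homomorphism details that the paper leaves implicit.
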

		By \cite[Theorem]{At}, $K^*(\mathbb{RP}^2)=K^0(\mathbb{RP}^2)\cong\mathbb{Z}\oplus\mathbb{Z}_2$, where the nonzero 2-torsion is represented by the reduced vector bundle $G/T\times_\Gamma\mathbb{C}_1-\mathbb{RP}^2\times\mathbb{C}$. The Atiyah-Hirzebruch spectral sequence for $K^*(\mathbb{RP}^2)$ is known to collapse on the $E_2$-page, and the generator of $E_2^{2, 0}=H^2(\mathbb{RP}^2, K^0(\text{pt}))\cong\mathbb{Z}_2$ corresponds to the aforementioned vector bundle. By considering the pullback map $\pi^*: K^*(\mathbb{RP}^2)\to K^*(G/T\times_\Gamma T^n)$ and the induced map on the $E_2^{2, 0}$-pages 
		\[\pi^*: H^2(\mathbb{RP}^2, K^0(\text{pt}))\to H^2(\mathbb{RP}^2, \pi^*(K^0(\text{pt})))\cong H^2(\mathbb{RP}^2, \mathbb{Z}\cdot 1)\subseteq H^2(\mathbb{RP}^2, K^0(T^n)),\] 
		we have that the generator of $H^2(\mathbb{RP}^2, \mathbb{Z}\cdot 1)$ corresponds to $u$, which is $\pi^*(G/T\times_\Gamma\mathbb{C}_1-\mathbb{RP}^2\times\mathbb{C})$. 

	\begin{claim}\label{claim2}
		The generators of $H^1(\mathbb{RP}^2, \mathbb{Z}\cdot\delta_i)$ as a subgroup of the $E^{1, -1}_2$-page of the spectral sequence can be represented by $v_i$.
	\end{claim}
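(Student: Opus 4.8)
The plan is to reduce to the case $n=1$ and then to detect $v$ by restricting it to a Klein bottle sitting inside $G/T\times_\Gamma T$.

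First I would reduce to $n=1$, exactly as in the preceding claim about $u$, by invoking the naturality of the Atiyah--Hirzebruch spectral sequence. Recall from Definition \ref{varvectbund} that $v_i=\pi_i^*v$, where $v\in K^0(G/T\times_\Gamma T)$ is the class represented by $(E_1-E_0)/\Gamma$ and $\pi_i\colon G/T\times_\Gamma T^n\to G/T\times_\Gamma T$ covers the identity on $\mathbb{RP}^2$ while inducing $\delta\mapsto\delta_i$ on the fibre. The induced homomorphism of spectral sequences therefore carries the generator of $E_2^{1,-1}=H^1(\mathbb{RP}^2,\underline{\mathbb{Z}\delta})$ onto the generator of the summand $H^1(\mathbb{RP}^2,\underline{\mathbb{Z}\delta_i})$ of $E_2^{1,-1}$ for $G/T\times_\Gamma T^n$. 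It thus suffices to show that, for the fibration $T\hookrightarrow G/T\times_\Gamma T\to\mathbb{RP}^2$, the class $v$ represents the generator of $E_\infty^{1,-1}=E_2^{1,-1}=H^1(\mathbb{RP}^2,\underline{\mathbb{Z}\delta})\cong\mathbb{Z}_2$.

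Next I would locate $v$ in the filtration. Writing $\widetilde E_m=(\pi_T^*E_m)/\Gamma$ for the descended line bundles, the $\Gamma$-action on the fibre of $E_0$ is trivial, so $\widetilde E_0$ is trivial and $v=[\widetilde E_1]-1$. Restricted to a fibre $T\cong S^1$ both bundles are trivial, so $v\in F^1K^0$. Since the Atiyah--Hirzebruch filtration identifies $F^2K^0$ with the kernel of restriction to $\pi^{-1}(\mathbb{RP}^1)$, where $\mathbb{RP}^1$ is the $1$-skeleton of $\mathbb{RP}^2$, and since $\pi^{-1}(\mathbb{RP}^1)=S^1\times_\Gamma T$ is the Klein bottle $\mathcal{K}$ (the mapping torus of the reflection $\theta\mapsto-\theta$ on $T$, with $S^1\subset G/T$ the $\Gamma$-invariant equatorial circle double-covering $\mathbb{RP}^1$), the class $v$ represents the generator of $E_\infty^{1,-1}=F^1/F^2\cong\mathbb{Z}_2$ if and only if $v|_{\mathcal{K}}\neq0$ in $K^0(\mathcal{K})$.

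It therefore remains to show $v|_{\mathcal{K}}=[L]-1\neq0$, where $L=\widetilde E_1|_{\mathcal{K}}$. Here I would use the determinant homomorphism of Lemma \ref{HLlemma}: by its multiplicativity, $\det(v|_{\mathcal{K}})=c_1(L)\in H^2(\mathcal{K},\mathbb{Z})\cong\mathbb{Z}_2$, so it is enough to prove $c_1(L)\neq0$, i.e.\ that $L$ is nontrivial. Trivialising $L$ over the fundamental domain $T\times[0,\pi]$ of $\mathcal{K}$, its clutching datum reads $s(\pi,\theta)=e^{\ii\theta}s(0,-\theta)$; a nowhere-vanishing section would then force the fibrewise winding number $w_0$ to satisfy $w_0=1-w_0$, which is impossible, so $L$ admits no such section and $c_1(L)\neq0$. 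Hence $v|_{\mathcal{K}}\neq0$ and the claim follows. The main obstacle is this final step: one must correctly identify $\pi^{-1}(\mathbb{RP}^1)$ with the Klein bottle and the restricted bundle $L$ with the stated clutching datum, after which the nontriviality of $L$ is a short winding-number computation.
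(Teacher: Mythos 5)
Your proof is correct, but it follows a genuinely different route from the paper's. The paper works algebraically: it classifies the four $\Gamma$-equivariant line bundles on $T$ (Proposition \ref{equivKT} together with Lemma \ref{HLlemma}), shows that $1+v_\Gamma$, hence its pullback $1+v_1$, represents the generator of the $E_2^{1,1}$-term of the Serre spectral sequence computing $H^2(G/T\times_\Gamma T,\mathbb{Z})$, and then transports this statement back to $K$-theory by proving, via the 5-lemma applied to the diagram (\ref{torsioniso}), that $\det\colon M_1=\mathrm{Tors}\,K^0(G/T\times_\Gamma T)\to H^2(G/T\times_\Gamma T,\mathbb{Z})$ is an isomorphism carrying $v_1$ to $1+v_1$; the case of general $n$ then follows by naturality. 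You make the same reduction to $n=1$, but treat that case geometrically: using the skeletal description of the Atiyah--Hirzebruch filtration, $[v]$ generates $E_\infty^{1,-1}=F^1/F^2\cong\mathbb{Z}_2$ exactly when $v$ restricts nontrivially to $\pi^{-1}(\mathbb{RP}^1)$, which you correctly identify as the Klein bottle $S^1\times_\Gamma T$, and the nonvanishing there is an explicit clutching-function computation. (With the paper's convention for the $\Gamma$-action on $E_1$ the clutching datum is $s(\pi,\theta)=e^{-\ii\theta}s(0,-\theta)$ rather than $e^{\ii\theta}s(0,-\theta)$, but the parity contradiction $2w_0=\pm 1$ is unaffected, so this is immaterial.) Your route is more elementary and self-contained---it needs neither the equivariant Picard group, nor Proposition \ref{equivKT}, nor the diagram chase---and it exhibits concretely the subspace on which $v$ is detected. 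What the paper's longer route buys is the determinant isomorphism $M_1\cong H^2(G/T\times_\Gamma T,\mathbb{Z})$ itself, which is reused later in the proof of Theorem \ref{premainthm} (for instance, to show $2u=2v_i=0$); with your argument those torsion relations would require separate justification.
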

		By Lemma \ref{HLlemma} \ref{equivPicard}, the $\Gamma$-equivariant Picard group of isomorphism classes of complex $\Gamma$-line bundles on $T$ is isomorphic to $H_\Gamma^2(T, \mathbb{Z})$, which by the collapsing of the $E_2$-page of the fibration $T\hookrightarrow S^\infty\times_\Gamma T\to \mathbb{RP}^\infty$ without extension problems (\cite[Theorem 1.2]{AGPP}) fits into the split short exact sequence 
		\[0\longrightarrow E_2^{2, 0}\longrightarrow H^2_\Gamma(T, \mathbb{Z})\longrightarrow E_2^{1, 1}\longrightarrow 0\]
	where $E_2^{2, 0}=H_\Gamma^2(\text{pt}, H^0(T, \mathbb{Z}))\cong\mathbb{Z}_2$ and $E^{1, 1}_2=H_\Gamma^1(\text{pt}, \underline{H^1(T, \mathbb{Z})})\cong\mathbb{Z}_2$. So $H^2_\Gamma(T, \mathbb{Z})\cong\mathbb{Z}_2\oplus\mathbb{Z}_2$. By Proposition \ref{equivKT}, the four non-isomorphic complex $\Gamma$-line bundles over $T$ are given by the trivial line bundle 1, $1+u_\Gamma$, $1+v_\Gamma$, and $(1+u_\Gamma)\otimes (1+v_\Gamma)$ (see also \cite[Proof of item 3 of Lemma 5.1]{CKMS}). Noting that the generator of $H^2_\Gamma(\text{pt}, \mathbb{Z})\cong\mathbb{Z}_2$ corresponds to the nontrivial 1-dimensional representation of $\Gamma$ and the isomorphism $H^2_\Gamma(\text{pt}, \mathbb{Z})\to H^2_\Gamma(\text{pt}, H^0(T, \mathbb{Z}))$ which pulls 1-dimensional representations of $\Gamma$ back to complex $\Gamma$-line bundles over $T$, the generator of $H^2_\Gamma(\text{pt}, H^0(T, \mathbb{Z}))$ then corresponds to the line bundle $1+u_\Gamma$. The generator of $H^1_\Gamma(\text{pt}, \underline{H^1(T, \mathbb{Z})})$ can be represented by the line bundle $1+v_\Gamma$ (note that the generator can also be represented by the line bundle $(1+u_\Gamma)\otimes (1+v_\Gamma)$, and the two representatives correspond to two splittings of the above short exact sequence). Consider the following commutative diagram
	\begin{eqnarray}
		\xymatrix{K^0_\Gamma(T)\ar[r]\ar[d]_\det& K^0(G/T\times_\Gamma T)\ar[d]^\det\\ H_\Gamma^2(T, \mathbb{Z})\ar[r]^\beta& H^2(G/T\times_\Gamma T, \mathbb{Z}).}
	\end{eqnarray}
	where the horizontal maps are pullbacks and $\det$ takes a (virtual, equivariant) vector bundle to its determinant line bundle. 
	
	By analysing the map induced by $\beta$ on the $E_2$-pages of the spectral sequences for $H_\Gamma^2(T, \mathbb{Z})$ and $H^2(G/T\times_\Gamma T, \mathbb{Z})$, we see that $\beta$ is an isomorphism, which maps $1+u_\Gamma$ to $1+u$ and $1+v_\Gamma$ to $1+v_1$. Moreover, the generator of the $E^{1, 1}_2$-page $H^1(\mathbb{RP}^2, \underline{H^1(T, \mathbb{Z})})\cong\mathbb{Z}_2$ of $H^2(G/T\times_\Gamma T, \mathbb{Z})$ can be represented by $1+v_1$, the image under $\beta$ of $1+v_\Gamma$ which has just been shown to represent the generator of $H^1_\Gamma(\text{pt}, \underline{H^1(T, \mathbb{Z})})$. On the other hand, consider the $E_2$-page for the map induced by the right vertical map $\det$ from the torsion part $M_1=\text{Tors }K^0(G/T\times_\Gamma T)$ (see the exact sequence (\ref{extension2}) in the proof of Proposition \ref{intcohgp}) to $H^2(G/T\times_\Gamma T, \mathbb{Z})$ and the resulting commutative diagram of the map between two short exact sequences
	\begin{eqnarray}\label{torsioniso}
		\resizebox{13cm}{!}{\xymatrix{0\ar[r]& H^2(\mathbb{RP}^2, K^0(T))\ar[r]\ar[d]& M_1\ar[r]\ar[d]& H^1(\mathbb{RP}^2, \underline{K^{-1}(T)})\ar[r]\ar[d]& 0\\ 0\ar[r]& H^2(\mathbb{RP}^2, H^0(T, \mathbb{Z}))\ar[r]&H^2(G/T\times_\Gamma T, \mathbb{Z})\ar[r]& H^1(\mathbb{RP}^2, \underline{H^1(T, \mathbb{Z})})\ar[r]&0}}
	\end{eqnarray}
	Both the left and right vertical maps are isomorphisms (for example, the right vertical map is an isomorphism because $\det: K^{-1}(T)\cong \widetilde{K}^0(S^2)\to \widetilde{H}^2(S^2, \mathbb{Z})\cong H^1(T, \mathbb{Z})$ is an isomorphism). By the 5-lemma, the middle map $\det: M_1\to H^2(G/T\times_\Gamma T, \mathbb{Z})$ is an isomorphism. More precisely, $\det$ sends $u\in M_1$ to $1+u$ and $v_1$ to $1+v_1$: note that $M_1$ consists of reduced $K$-theory classes as it is torsion, and both $u$ and $v_1$ are reduced, and $\det(v_1)=\det((1+v_1)-1)=\det(1+v_1)\otimes \det(1)^{-1}=1+v_1$ by Lemma \ref{HLlemma} \ref{det} (similarly we also have $\det(u)=1+u$). 
	Consequently, the generator of $H^1(\mathbb{RP}^2, \underline{K^{-1}(T)})=H^1(\mathbb{RP}^2, \underline{\mathbb{Z}\delta_1})$ can be represented by $v_1$, the preimage under $\det$ of $1+v_1\in H^2(G/T\times_\Gamma T, \mathbb{Z})$, which has just been shown to represent the generator of $H^1(\mathbb{RP}^2, \underline{H^1(T, \mathbb{Z})})$. 
	More generally, for $K^*(G/T\times_\Gamma T^n)$, the generator of $H^1(\mathbb{RP}^2, \underline{\mathbb{Z}\delta_i})$ can be represented by $v_i$.  This finishes the proof of Claim \ref{claim2}

	We are now in a position to show the additive structure of $K^*(G/T\times_\Gamma T^n)$. With the additive structure of the abelian group stated in Proposition \ref{freepart} (a free summand of $K^*(G/T\times_\Gamma T^n)$ corresponding to $E^{0, 0}_2\oplus E^{2, -1}_2\cong\mathbb{Z}^{2^n}$) having been shown in the proof of Proposition \ref{freepart}), it remains to show the additive structure of $M_n:=\text{Tors }K^*(G/T\times_\Gamma T^n)$. Recall that the $E_2^{2, 0}$-page for $K^0(G/T\times_\Gamma T^n)$, which is a subgroup of $M_n$ (cf. short exact sequence (\ref{extension2})), is $H^2(\mathbb{RP}^2, K^0(T))=\bigoplus_{1<i_1<i_2<\cdots<i_{2k}<n}H^2(\mathbb{RP}^2, \mathbb{Z}\delta_{i_1}\delta_{i_2}\cdots \delta_{i_{2k}})$. Note that the map
	\[H^2(\mathbb{RP}^2, \mathbb{Z}\cdot 1)\otimes_\mathbb{Z}H^0(\mathbb{RP}^2, \mathbb{Z}\delta_{i_1}\delta_{i_2}\cdots\delta_{i_{2k}})\to H^2(\mathbb{RP}^2, \mathbb{Z}\delta_{i_1}\delta_{i_2}\cdots\delta_{i_{2k}})\]
	given by cup product and coefficient multiplication is an isomorphism. Thus $ux_{i_1i_2}\cdots x_{i_{2k-1}i_{2k}}$, the product of the two $K$-theory classes $u$ and $x_{i_1i_2}\cdots x_{i_{2k-1}i_{2k}}$, which correspond to the generators of $H^2(\mathbb{RP}^2, \mathbb{Z}\cdot 1)$ and $H^0(\mathbb{RP}^2, \mathbb{Z}\delta_{i_1}\delta_{i_2}\cdots\delta_{i_{2k}})$ respectively, corresponds to the generator of $H^2(\mathbb{RP}^2, \mathbb{Z}\delta_{i_1}\cdots\delta_{i_{2k}})$. Similarly, the $E^{1, -1}_2$-page is 
	\[H^1(\mathbb{RP}^2, \underline{K^{-1}(T^n)})=\bigoplus_{1<i<j_1<j_2<\cdots<j_{2k}<n}H^1(\mathbb{RP}^2, \underline{\mathbb{Z}\delta_{i}\delta_{j_1}\cdots\delta_{j_{2k}}}),\] 
	and the map
	\[H^1(\mathbb{RP}^2, \underline{\mathbb{Z}\delta_i})\otimes H^0(\mathbb{RP}^2, \mathbb{Z}\delta_{j_1}\cdots\delta_{j_{2k}})\to H^1(\mathbb{RP}^2, \underline{\mathbb{Z}\delta_i\delta_{j_1}\cdots\delta_{j_{2k}}})\]
	given by cup product and coefficient multiplication is an isomorphism. Thus $v_ix_{j_1j_2}\cdots x_{j_{2k-1}j_{2k}}$ represents the generator of $H^1(\mathbb{RP}^2, \underline{\mathbb{Z}\delta_i\delta_{j_1}\delta_{j_2}\cdots\delta_{j_{2k}}})\cong\mathbb{Z}_2$. Now note that by Lemma \ref{HLlemma} \ref{det}, $\det(2v_i)=\det(v_i)^{\otimes 2}=(1+v_i)^{\otimes 2}$, which is $\pi_i^*E_2/\Gamma$, but $E_2$ is trivial because of the following $\Gamma$-equivariant isomorphism of line bundles:
	\begin{align*}
		\underline{\mathbb{C}}&\to E_2\\
		(e^{i\theta}, z)&\mapsto (e^{i\theta}, e^{-i\theta}z).
	\end{align*}
	From the isomorphism of the middle vertical map of (\ref{torsioniso}), we have that $2v_i$, being the reduced $K$-theory class corresponding to the trivial line bundle under $\det$, must be 0. Similarly, $2u=0$ in $M_n$. It follows that $ux_{i_1i_2}\cdots x_{i_{2k-1}i_{2k}}$ and $v_ix_{j_1j_2}\cdots x_{j_{2k-1}j_{2k}}$ are 2-torsion, and that the short exact sequence (\ref{extension2}) splits because $M_n$ is generated by generators of $E^{2, 0}_2$ and $E^{1, -1}_2$, which are all 2-torsion. Thus $M_n$ is spanned, as a vector space over $\mathbb{Z}_2$, by 
	\begin{eqnarray*}u, \{v_i|1\leq i\leq n\}, \left\{ux_{i_1i_2}\cdots x_{i_{2k-1}i_{2k}}\left| 1\leq i_1<i_2<\cdots< i_{2k}\leq n, 1\leq k\leq\left\lfloor\frac{n}{2}\right\rfloor\right.\right\}, \\ \left\{v_ix_{j_1j_2}\cdots x_{j_{2k-1}x_{j_{2k}}}\left|1\leq i<j_1<j_2<\cdots<j_{2k}, 1\leq k\leq \left\lfloor\frac{n-1}{2}\right\rfloor\right.\right\}. \end{eqnarray*}
	This, together with the additive structure of the free part of $K^*(G/T\times_\Gamma T^n)$ explained in the proof of Proposition \ref{freepart}, gives the first part of the theorem on the additive structure of $K^*(G/T\times_\Gamma T^n)$.
	
	Next, we shall show the following relations involving torsion elements $u$ and $v_i$. 
	\begin{enumerate}
		\item $2u=0$ and $2v_i=0$: this has been shown in the preceding paragraph.
		\item\label{mixed} $u^2=0$, $uv_i=0$ and $v_i^2=0$: 
		Note that the map $\pi_1^*: K_\Gamma^0(T)\to K^0_\Gamma(G/T\times T)\cong K^0(G/T\times_\Gamma T)$ satisfies $\pi_1^*(u_\Gamma)=u$, $\pi_1^*(v_\Gamma)=v_1$. By Proposition \ref{equivKT}, $u(u+2)=v_1(v_1+2)=v_1(u+2)=0$. Since $2u=0$ and $2v_1=0$, the previous relations can be reduced to $u^2=v_1^2=uv_1=0$. More generally, $uv_i=v_i^2=0$. 
		 
		 $v_iv_j=0$ for $i\neq j$: consider the map 
		\[H^1(\mathbb{RP}^2, \underline{\mathbb{Z}\delta_i})\otimes H^1(\mathbb{RP}^2, \underline{\mathbb{Z}\delta_j})\to H^2(\mathbb{RP}^2, \mathbb{Z}\delta_i\delta_j)\]
		given by cup product and coefficient multiplication. The cup product of the generators of $H^1(\mathbb{RP}^2, \underline{\mathbb{Z}\delta_i})$ and $H^1(\mathbb{RP}^2, \underline{\mathbb{Z}\delta_j})$, which are represented by $v_i$ and $v_j$, \emph{corresponds} to the $K$-theory class $v_iv_j$. It suffices to show that the cup square of the generator $e\in H^1(\mathbb{RP}^2, \underline{\mathbb{Z}})$ is $0\in H^2(\mathbb{RP}^2, \mathbb{Z})$. In fact, $e$ is the Euler class of the non-orientable real line bundle $G/T\times_\Gamma\mathbb{R}_1$ (the use of the twisted coefficients $\underline{\mathbb{Z}}$ accounts for the non-orientability of the line bundle). Thus the Whitney product formula implies that $e^2$ is the Euler class of the direct sum of line bundles $G/T\times_\Gamma (\mathbb{R}_1\oplus\mathbb{R}_1)$, which is isomorphic to the trivial rank-2 real vector bundle over $\mathbb{RP}^2$. We have that $e^2=0$ as desired. 
		\item $uw_i=v_iw_j=0$ \emph{for any }$1\leq i, j\leq n$. Note $K^{-1}(G/T\times_\Gamma T^n)$ is a free abelian group by Propositions \ref{intcohgp}, while $uw_i$ and $v_iw_j$ are both 2-torsion elements. It follows that $uw_i$ and $v_iw_j$ are 0.
	\item\label{square} {$x_{ij}^2=ux_{ij}$}: Recall from Definition \ref{varvectbund} that the map
	\[p_{ij}^*: K_{\Gamma}^*(S^2)\to K_{\Gamma}^*(G/T\times T^n)\cong K^*(G/T\times_\Gamma T^n)\]
	sends $H-1$ to $x_{ij}$ (here we use 1 to denote the trivial line bundle $S^2\times\mathbb{C}$ over $S^2$ with $\Gamma$ acting on the factor $\mathbb{C}$ trivially). Consider the $S^1$-action on $S^2$ by rotation, the equivariant $K$-theory $K^*_{S^1}(S^2)$ and the map 
	\[r_{S^1}^{\Gamma}: K_{S^1}^*(S^2)\to K_\Gamma^*(S^2)\]
	induced by restriction of $S^1$-action to $\Gamma$-action. Let $t\in R(S^1)$ be the 1-dimensional representation of $S^1$ with weight 1 (and by abuse of notation, the trivial line bundle $S^2\times\mathbb{C}$ with $S^1$ acting on $\mathbb{C}$ with weight 1), and $H_{S^1}$ be the $S^1$-equivariant line bundle on $S^2$ which, non-equivariantly, is the canonical line bundle, and restricts to the trivial representation on the fiber over the north pole $N$ and the representation $t$ on the fiber over the south pole $S$. Then we have $r_{S^1}^\Gamma(H_{S^1})=H$ and $r_{S^1}^\Gamma(t)=1+u$. By the injectivity of the restriction map
	\[r_{NS}: K_{S^1}^*(S^2)\to K_{S^1}^*(N)\oplus K_{S^1}^*(S)\] 
	(due to equivariant formality of the $S^1$-action on $S^2$ and \cite[Theorem A4]{RK}), which takes $H_{S^1}$ to $(1, t)$, we have that, as $K$-theory classes in $K_{S^1}^*(S^2)$, 
	\[(H_{S^1}-1)(H_{S^1}-t)=0.\]
	Applying $r_{S^1}^\Gamma$ to the above relation, we have 
	\[(H-1)(H-(1+u))=0.\]
	Applying $p_{ij}$, we get $x_{ij}(x_{ij}-u)=0$ as desired.
	\item $ux_{ij}^2=v_ix_{jk}^2=ux_{ij}\cdot x_{jk}=v_ix_{jk}\cdot x_{k\ell}=0$. The first two are 0 by items \ref{mixed} and \ref{square} above. Note that $x_{ij}x_{jk}$ is a 2-torsion because $t^*(x_{ij}x_{jk})=\delta_i\delta_j\delta_j\delta_k=0$, and thus a linear combination of the 2-torsion generators, which all contains factors $u$ and $v_i$. By items \ref{mixed}, and \ref{square}, $ux_{ij}x_{jk}=v_ix_{jk}x_{k\ell}=0$
 	\end{enumerate}
\end{proof}

\section{The cohomology of the blowup}\label{coblowup}
For a finite CW-complex $X$, let $\mathcal{A}$ be a subring of $K^*(X)$ generated by $b_1, \cdots, b_n$, which are represented by (products of) reduced line bundles $L_1-1, \cdots, L_n-1$ over $X$ or its suspension $\Sigma X$, satisfying $L_i^{\otimes 2}\oplus 1\cong L_i^{\oplus 2}$ (implying that $b_i^2=0$). One can define the Chern character $\text{ch}$ mapping from $\mathcal{A}$ to the \emph{integral} cohomology $H^*(X, \mathbb{Z})$, and it still makes sense even for torsion $K$-theory classes. To be more precise, we note that by taking the total Chern class of both sides of the isomorphism $L_i^{\otimes 2}\oplus 1\cong L_i^{\oplus 2}$, we have $c_1(L_i)^2=0$, and regardless of whether $b_i$ is torsion, its Chern character can be defined the way the Chern character on rational $K$-theory is defined:
\begin{align*}
	\text{ch}(b_i)&=\text{ch}(L_i-1)\\
			   &=e^{c_1(L_i)}-1\\
			   &=\left(1+c_1(L_i)+\frac{c_1(L_i)^2}{2}+\cdots\right)-1\\
			   &=c_1(L_i).
\end{align*}
Here the higher order terms $\displaystyle \frac{c_1(L_i)^n}{n!}$ for $n\geq 2$ are defined to be 0 as $c_1(L_i)^2=0$. One may argue that these terms may actually represent nonzero $n!$-torsions and thus the Chern character is only defined up to some torsions. However, by defining more generally the higher order terms of the form $\displaystyle \frac{c_1(L_i)^kc_1(L_j)^{n-k}}{n!}$ with $k\geq 2$ or $n-k\geq 2$ to be 0, we can ensure that the Chern character thus defined is a ring homomorphism:
\begin{align*}
	\text{ch}(b_ib_j)&=\text{ch}(L_i\otimes L_j\oplus 1-(L_i\oplus L_j))\\
				&=e^{c_1(L_i\otimes L_j)}+1-(e^{c_1(L_i)}+e^{c_1(L_j)})\\
				&=\left(1+c_1(L_i)+c_1(L_j)+\frac{(c_1(L_i)+c_1(L_j))^2}{2}+\cdots\right)+1-(2+c_1(L_i)+c_1(L_j))\\
				&=\frac{c_1(L_i)^2+c_1(L_j)^2}{2}+c_1(L_i)c_1(L_j)\\
				&=c_1(L_i)c_1(L_j)\\
				&=\text{ch}(b_i)\text{ch}(b_j)
\end{align*}
\begin{proposition}\label{ringiso}
	The Chern character 
	\[\text{ch}: K^*(G/T\times_\Gamma T^n)\to H^*(G/T\times_\Gamma T^n, \mathbb{Z})\]
	where
	\[\text{ch}(u)=c_1(\pi^*(G/T\times_\Gamma\mathbb{C}_1)),\ \text{ch}(v_i)=c_1(\pi_i^*(E_1)),\ \text{ch}(x_{ij})=c_1(p_{ij}^*H/\Gamma)\]
	and $\text{ch}(w_i)$ is the $i$-th free generator of $H^3(G/T\times_\Gamma T^n, \mathbb{Z})$ (cf. Remark \ref{H3gengen}) is a well-defined ring isomorphism.
\end{proposition}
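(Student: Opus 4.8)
The plan is first to check that the prescription on generators defines a ring homomorphism, and then that it is bijective. The key preliminary observation is that the map written down here coincides with the ordinary rational Chern character, so that multiplicativity over $\mathbb{Q}$ comes for free. Indeed $\text{ch}(u)$, $\text{ch}(v_i)$ and $\text{ch}(x_{ij})$ are all first Chern classes of line bundles, and in each case the square of that first Chern class is torsion: for $u$ and $v_i$ the tensor square of the underlying bundle is trivial (for $v_i$ via the isomorphism $E_1^{\otimes 2}\cong E_2\cong\underline{\mathbb{C}}$ used in Theorem \ref{premainthm}), so $c_1^2=0$, while for $L_{ij}:=p_{ij}^*H/\Gamma$ one has $t^*c_1(L_{ij})^2=p_{ij}^*(c_1(H)^2)=0$ because $H^4(S^2,\mathbb{Z})=0$, whence $c_1(L_{ij})^2$ is torsion by the injectivity of $t^*$ on free classes (the cohomological statement inside the proof of Proposition \ref{oddinject}). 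Likewise $\text{ch}(w_i)=\pi_i^*\text{ch}(w)$ is concentrated in degree $3$, since $G/T\times_\Gamma T$ is a $3$-manifold with $H^1=0$ (Propositions \ref{H3gen} and \ref{intcohgp}). Consequently all higher Chern-character terms vanish rationally and the displayed map equals $\text{ch}\otimes\mathbb{Q}$. The real content is therefore that the chosen \emph{integral} representatives satisfy the $K$-theory relations exactly, not merely rationally, and that the resulting homomorphism is an isomorphism of abelian groups.

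For the relations in the free part, namely those of Proposition \ref{freepart} among the $x_{ij}$ and $w_i$, I would verify them after pulling back along the double cover $t$ to $S^2\times T^n$, where the cohomology is torsion-free, $\text{ch}$ is the K\"unneth ring isomorphism, and $t^*$ is injective on free classes; the exterior-algebra signs are then read off from the products $\delta_{i_1}\cdots\delta_{i_{2k}}$ upstairs. Among the torsion relations of Theorem \ref{premainthm} I would handle most by the multiplicative Serre spectral sequence of $T^n\hookrightarrow G/T\times_\Gamma T^n\to\mathbb{RP}^2$: since the base is $2$-dimensional the filtration satisfies $F^p=0$ for $p>2$, and $x_{ij},v_i,u,w_i$ represent classes in filtration $0,1,2,2$ respectively. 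Thus $u^2$, $uv_i$, $uw_i$ and $v_iw_j$ land in $F^{>2}=0$; the equalities $2u=2v_i=0$ hold because the first Chern class of a square-trivial line bundle is $2$-torsion; and $v_iv_j=0$ for $i\neq j$ follows exactly as in Theorem \ref{premainthm}, from the vanishing of the cup square of the twisted Euler class $e\in H^1(\mathbb{RP}^2,\underline{\mathbb{Z}})$ of $G/T\times_\Gamma\mathbb{R}_1$.

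The main obstacle is relation (4), $x_{ij}^2=ux_{ij}$. It is invisible both rationally and under $t^*$ (both sides are torsion in $H^4$), and it is not forced by the filtration, since $x_{ij}^2$ lies a priori in $F^1$ while $ux_{ij}$ lies in $F^2$. I would settle it by lifting the $K$-theoretic argument of Theorem \ref{premainthm}(4) to cohomology. The rotation $S^1$-action on $S^2$ is equivariantly formal, and $H_{S^1}^*(S^2,\mathbb{Z})\cong\mathbb{Z}[y,s]/(y(y-s))$ with $y=c_1^{S^1}(H_{S^1})$ the equivariant hyperplane class and $s$ the image of the generator of $H^*(BS^1)$; this yields $y^2=sy$. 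Restricting the action to $\Gamma$ sends $y$ to $c_1^\Gamma(H)$ and $s$ to the equivariant class of the line bundle $1+u_\Gamma$, and pulling back along the $\Gamma$-equivariant map $p_{ij}$ sends these to $c_1(L_{ij})$ and $\text{ch}(u)$; the relation becomes $c_1(L_{ij})^2=\text{ch}(u)\,c_1(L_{ij})$, which is precisely $\text{ch}(x_{ij}^2)=\text{ch}(ux_{ij})$. The products in relation (5) then reduce, as in the $K$-theory proof, to (4) together with $u^2=uv_i=0$ and the filtration/torsion bookkeeping (writing $x_{ij}x_{jk}$ as a combination of torsion generators, each carrying a factor $u$ or $v_m$, and multiplying).

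Finally I would prove bijectivity by matching generating sets. Each even monomial $x_{i_1i_2}\cdots x_{i_{2k-1}i_{2k}}$ maps to a product of first Chern classes; since $t^*$ is an isomorphism on the fibrewise subquotients $E_2^{0,q}$ with $q$ even, and carries these products to the integral generators $\delta_{i_1}\cdots\delta_{i_{2k}}$ upstairs, they are genuine integral generators of the free summand $\mathbb{Z}^{\binom{n}{2k}}$ of $H^{2k}$. The classes $\text{ch}(w_i)$ freely generate $H^3$ by Remark \ref{H3gengen}, and multiplying by the fibrewise generators $\text{ch}(x_{j_1j_2}\cdots)$ produces integral generators of the free odd groups $H^{2k+3}\cong\mathbb{Z}^{\binom{n}{2k+1}}$. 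On the torsion side, the $2^n$ classes $u$, $v_i$, $ux_{\ldots}$, $v_ix_{\ldots}$ go to the torsion generators of the even cohomology: this is exactly the spectral-sequence identification already made in the proof of Theorem \ref{premainthm}, where $u$ and $v_i$ were shown to represent the generators of $H^2(\mathbb{RP}^2,\mathbb{Z}\cdot 1)$ and $H^1(\mathbb{RP}^2,\underline{\mathbb{Z}\delta_i})$, the very subquotients producing the $\mathbb{Z}_2$-summands of the even cohomology in Proposition \ref{intcohgp}. As both $K^*(G/T\times_\Gamma T^n)$ and $H^*(G/T\times_\Gamma T^n,\mathbb{Z})$ are abstractly $\mathbb{Z}^{2^n}\oplus\mathbb{Z}_2^{2^n}$ and $\text{ch}$ sends a generating set onto a generating set, the surjection $\text{ch}$ is forced to be an isomorphism of rings.
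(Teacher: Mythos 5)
Parts of your plan are genuinely good, and different from the paper's route: your multiplicative Serre-filtration argument over $\mathbb{RP}^2$ (using $F^3=0$) disposes of $u^2$, $uv_i$, $uw_i$, $v_iw_j$ (and $w_iw_j$) more cleanly than the paper, which instead pulls these products back from the $n\le 2$ cases and invokes $H^k=0$ for $k>\dim$; and your derivation of $\text{ch}(x_{ij})^2=\text{ch}(u)\text{ch}(x_{ij})$ by restricting $H^*_{S^1}(S^2,\mathbb{Z})\cong\mathbb{Z}[y,s]/\bigl(y(y-s)\bigr)$ to $\Gamma$ and pulling back along $p_{ij}$ is a correct integral lift of the $K$-theoretic argument in Theorem \ref{premainthm}; the paper never verifies this relation cohomologically, relying instead on the truncated Chern character formalism of Section \ref{coblowup} to get multiplicativity on the subring generated by $u$, $v_i$, $x_{ij}$.

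The genuine gap is in your treatment of the free-part relations. The kernel of $t^*$ on \emph{integral} cohomology is not zero: by the transfer, $t_*t^*=2$, so $\ker t^*$ consists of $2$-torsion, and the even-degree groups have $2$-torsion of rank growing with $n$ (Proposition \ref{intcohgp}); indeed $\text{ch}(u)$ and $\text{ch}(v_i)$ are themselves nonzero classes killed by $t^*$. Hence checking the relations of Proposition \ref{freepart} ``after pulling back along $t$'' proves, e.g., $\text{ch}(x_{ij})\text{ch}(x_{k\ell})=-\text{ch}(x_{ik})\text{ch}(x_{j\ell})$ only modulo $2$-torsion in $H^4$, whereas exact equality is what well-definedness and multiplicativity of $\text{ch}$ require. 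For products involving $w_i$ this is harmless ($H^{\text{odd}}$ is torsion-free, and $w_iw_j$ is covered by your filtration argument), but the identities among products of the $\text{ch}(x_{ij})$ alone live in even degrees, where torsion is present, and they sit in filtration $0$, so neither of your other devices applies. The repair is essentially contained in your last paragraph but never invoked for this purpose: the products $\text{ch}(x_{i_1i_2})\cdots\text{ch}(x_{i_{2k-1}i_{2k}})$ have nonvanishing leading terms generating the free subquotients $E_\infty^{0,\text{even}}$, so the subgroup they generate is free, and inside a free subgroup a congruence modulo torsion is an equality; alternatively follow the paper, which sidesteps the issue by defining $\text{ch}$ on the free subring as the restriction of the rational Chern character and then proving its image is a free summand of rank $2^n$. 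Finally, a smaller but real slip: triviality of $L^{\otimes 2}$ gives $2c_1(L)=0$, \emph{not} $c_1(L)^2=0$ (the nontrivial complex line bundle on $\mathbb{RP}^4$ has $c_1^2\neq 0$); here $\text{ch}(u)^2=0$ and $\text{ch}(v_i)^2=0$ hold because those classes are pulled back from $\mathbb{RP}^2$ and from the $3$-manifold $G/T\times_\Gamma T$ respectively, or via the paper's hypothesis $L^{\otimes 2}\oplus 1\cong L^{\oplus 2}$, which does force $c_1^2=0$.
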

\begin{proof}
	The free abelian subgroup of $K^*(G/T\times_\Gamma T^n)$ generated by products of $w_i, 1\leq i\leq n$ and $x_{ij}, 1\leq i<j\leq n$ is a subring of $K^*(G/T\times_\Gamma T^n)$ and the Chern character $\text{ch}$ restricted to this subring is a ring homomorphism into $H^*(G/T\times_\Gamma T^n, \mathbb{Z})$ because $x_{ij}$ and $w_i$ are free generators of the rationalized $K$-theory ring $K^*(G/T\times_\Gamma T^n)\otimes\mathbb{Q}$ on which ch is well-known to be a ring homomorphism into $H^*(G/T\times_\Gamma T^n, \mathbb{Q})$. By Proposition \ref{freepart}, this subring is a free abelian group summand of $K^*(G/T\times_\Gamma T^n)$ which is of rank $2^n$. Identifying $K^*(G/T\times_\Gamma T^n)/\text{Tors}$ with this free abelian subgroup, we shall first show that $\text{ch}$ maps $K^*(G/T\times_\Gamma T^n)/\text{Tors}$ isomorphically onto a free abelian group summand of $H^*(G/T\times_\Gamma T^n, \mathbb{Z})$ of rank $2^n$, which is a maximal free abelian subgroup of $H^*(G/T\times_\Gamma T^n, \mathbb{Z})$. We have shown in the proof of Proposition \ref{freepart} that $\delta_i\delta_j\in H^0(\mathbb{RP}^2, \mathbb{Z}\delta_i\delta_j)\subseteq H^0(\mathbb{RP}^2, K^0(T^n))=E_2^{0, 0}$ is represented by $x_{ij}$, whereas the generator of $H^2(\mathbb{RP}^2, \underline{\mathbb{Z}\delta_i})$, which is a subgroup of $H^2(\mathbb{RP}^2, \underline{K^{-1}(T^n)})=E_2^{2, -1}$ corresponds to $w_i$. By the collapsing of both the spectral sequences of $K^*(G/T\times_\Gamma T^n)$ and $H^*(G/T\times_\Gamma T^n, \mathbb{Z})$ on the $E_2$-page and applying the Chern character on the $E_2$-page, the generator of $H^0(\mathbb{RP}^2, \mathbb{Z}\text{ch}(\delta_i\delta_j))$, which is a subgroup of the $E_2$-page of the spectral sequence for $H^*(G/T\times_\Gamma T^n, \mathbb{Z})$, i.e. 
	 \[E_2^{0, 2}=H^0(\mathbb{RP}^2, H^2(T^n, \mathbb{Z}))=\bigoplus_{1\leq i<j\leq n} H^0(\mathbb{RP}^2, \mathbb{Z}\text{ch}(\delta_i\delta_j)),\] is represented by $\text{ch}(x_{ij})$. Likewise, the generator of $H^2(\mathbb{RP}^2, \underline{\mathbb{Z}\text{ch}(\delta_i)})$, which is a subgroup of the $E_2$-page 
	 \[E_2^{2, 1}=H^2(\mathbb{RP}^2, \underline{H^1(T^n, \mathbb{Z})})=\bigoplus_{i=1}^nH^2(\mathbb{RP}^2, \underline{\mathbb{Z}\text{ch}(\delta_i)}),\] 
	corresponds to $\text{ch}(w_i)$. Note that 
	\[\bigoplus_{q\text{ even}}E_2^{0, q}\oplus \bigoplus_{q\text{ odd}}E_2^{0, q}=\bigoplus_{q\text{ even}}H^0(\mathbb{RP}^2, H^q(T^n, \mathbb{Z}))\oplus\bigoplus_{q\text{ odd}}H^2(\mathbb{RP}^2, \underline{H^q(T^n, \mathbb{Z})})\]
	is generated by products of generators of $H^0(\mathbb{RP}^2, \mathbb{Z}\text{ch}(\delta_i\delta_j))$ and $H^2(\mathbb{RP}^2, \underline{\mathbb{Z}\text{ch}(\delta_i)})$, and as shown in the proof of Proposition \ref{intcohgp}, 
	it corresponds to a free abelian group summand of $H^*(G/T\times_\Gamma T^n, \mathbb{Z})$ of rank $2^n$. It follows that the free abelian subgroup of $H^*(G/T\times_\Gamma T^n, \mathbb{Z})$ generated by products of $\text{ch}(x_{ij})$ and $\text{ch}(w_i)$ (i.e. the image under the Chern character of the free abelian subgroup of $K^*(G/T\times_\Gamma T^n)$ generated by products of $w_i$ and $x_{ij}$) is a summand of rank $2^n$.
	
	Next, we shall show that the Chern character maps the subring $\text{Tors }K^*(G/T\times_\Gamma T^n)$ isomorphically onto $\text{Tors }H^*(G/T\times_\Gamma T^n, \mathbb{Z})$. Note that it makes sense to define ch on $\text{Tors }K^*(G/T\times_\Gamma T^n)$ and ch is a ring homomorphism on $\text{Tors }K^*(G/T\times_\Gamma T^n)$ because it is generated by products of $u$, $v_i$ and $x_{ij}$, all of which are represented by reduced line bundles (see Definition \ref{varvectbund}) satisfying the condition laid out in the discussion preceding this proposition. Besides the image of $\text{Tors }K^*(G/T\times_\Gamma T^n)$ under ch lies in $\text{Tors }H^*(G/T\times_\Gamma T^n, \mathbb{Z})$. Noting that both $\text{Tors }K^*(G/T\times_\Gamma T^n)$ and $\text{Tors }H^*(G/T\times_\Gamma T^n, \mathbb{Z})$ have the same order by Proposition \ref{intcohgp}, and bearing in mind that ch is an injective map, we have that $\text{ch}: \text{Tors }K^*(G/T\times_\Gamma T^n)\to \text{Tors }H^*(G/T\times_\Gamma T^n, \mathbb{Z})$ is a ring isomorphism. 
	
	Now we have shown that $\text{ch}: K^*(G/T\times_\Gamma T^n)\to H^*(G/T\times_\Gamma T^n, \mathbb{Z})$ is bijective, and that ch is a ring homomorphism when restricted to $K^*(G/T\times_\Gamma T^n)/\text{Tors}$ and $\text{Tors }K^*(G/T\times_\Gamma T^n)$. It remains to show that ch is a ring homomorphism on the full $K$-theory ring $K^*(G/T\times_\Gamma T^n)$. In particular, we need to check the following equations involving $w_i$ which is not represented by a reduced line bundle as well as $u$ and $v_i$ which are torsion: 
	\begin{align*}
		\text{ch}(uw_i)&=\text{ch}(u)\text{ch}(w_i),\\
		\text{ch}(v_iw_j)&=\text{ch}(v_i)\text{ch}(w_j),\\
		\text{ch}(w_iw_j)&=\text{ch}(w_i)\text{ch}(w_j).
	\end{align*}
	The LHS of the above equations are 0 because $uw_i=v_iw_j=w_iw_j=0$ by Proposition \ref{freepart} and Theorem \ref{premainthm} while the RHS can be written respectively as $\pi_i^*(\text{ch}(u)\text{ch}(w)),\ \pi_{ij}^*(\text{ch}(v_1)\text{ch}(w_2))$ (or $ \pi_i^*(\text{ch}(v)\text{ch}(w))$ if $i=j)$, $\pi_{ij}^*(\text{ch}(w_1)\text{ch}(w_2))$ (or $\pi_i^*(\text{ch}(w))^2$ if $i=j$). Note that, for example, $\text{ch}(v_1)\text{ch}(w_2)$ lives in $H^5(G/T\times_\Gamma T^2, \mathbb{Z})$, which is 0 because $\text{dim }G/T\times_\Gamma T^2=4<5$. Using this argument of comparing cohomological degree and dimension one can also show that $\text{ch}(u)\text{ch}(w)$, $\text{ch}(v)\text{ch}(w)$, $\text{ch}(w_1)\text{ch}(w_2)$ and $(\text{ch}(w))^2$ are 0 and hence the RHS of the above displayed equations are all 0. Now we have shown that $\text{ch}: K^*(G/T\times_\Gamma T^n)\to H^*(G/T\times_\Gamma T^n, \mathbb{Z})$ is a ring isomorphism as desired.  
\end{proof}

\section{The $K$-theory and cohomology of $\text{Hom}(\mathbb{Z}^n, SU(2))$}

\begin{definition}
	For $a\in A$, let $i_a: G/T\times_\Gamma\{a\}\hookrightarrow G/T\times_\Gamma T^n$ and $i: G/T\times_\Gamma A\hookrightarrow G/T\times_\Gamma T^n$ be natural inclusions. Let $x\in K^*(G/T\times_\Gamma T^n)$ be any $K$-theory class satisfying $i^*x=0$. So $x$ is a reduced $K$-theory class and can be represented by the virtual vector bundle $F-\underline{\mathbb{C}}^m$ for some rank $m$ vector bundle $F$ on $G/T\times_\Gamma T^n$ (respectively $\Sigma(G/T\times_\Gamma T^n)$) which is stably trivial when restricted on $G/T\times_\Gamma A$ (respectively its suspension), i.e. $F\oplus \underline{\mathbb{C}}^r$ is a trivial vector bundle for some $r$. Then we define the map
	\[r_*: \text{ker}(i^*)\to K^*(Y_n, A)\cong\widetilde{K}^*(Y_n/A)\]
	and $r_*(x)\in \widetilde{K}^*(Y_n/A)$ to be the $K$-theory class represented by the reduced vector bundle on $Y_n/A$ or $\Sigma(Y_n/A)$ obtained from $F-\underline{\mathbb{C}}^m=(F\oplus \underline{\mathbb{C}}^r)-\underline{\mathbb{C}}^{m+r}$ by collapsing the restricted trivial vector bundle $i^*(F\oplus\underline{\mathbb{C}}^r)$ over $G/T\times_\Gamma A$ to one of its fibers over the base point of $Y_n/A$ (cf. proof of \cite[Lemma 2.4.2]{At}). By abuse of notation we also denote by $r_*$ the maps $\text{ker}(i^*)\to K^*(Y_n)$ which are similarly defined by collapsing, for each point $a\in A$, the trivial vector bundle $i_a^*(F\oplus\underline{\mathbb{C}}^r)$ over $G/T\times_\Gamma \{a\}$ to one of its fibers over the point $a\in Y_n$.
\end{definition}

\begin{proposition}\label{restriction}
	We have $i_a^*v_i=\begin{cases}u&\ \ \text{if }a_i=-1\\ 0&\ \ \text{otherwise}\end{cases}$ and $\displaystyle i^*_ax_{ij}=\begin{cases}u&\ \ \text{if }a_i=-1\text{ or }a_j=-1\\ 0&\ \ \text{otherwise}\end{cases}$.
\end{proposition}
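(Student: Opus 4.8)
The plan is to compute each restriction by pulling the defining bundle back to the single fiber $G/T\times_\Gamma\{a\}\cong\mathbb{RP}^2$ and identifying the resulting $K$-theory class in $\widetilde{K}^0(\mathbb{RP}^2)\cong\mathbb{Z}_2$, whose nonzero element is precisely $u$ (this is the content of the earlier identification $K^0(\mathbb{RP}^2)\cong\mathbb{Z}\oplus\mathbb{Z}_2$ with the torsion generated by $G/T\times_\Gamma\mathbb{C}_1-\mathbb{RP}^2\times\mathbb{C}$). So the whole computation amounts to determining, for each generator $v_i$ and $x_{ij}$, the isomorphism type of the restricted equivariant line bundle over the fiber, and recognizing whether it is the trivial or the nontrivial $\Gamma$-line bundle.

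First I would handle $v_i$. By Definition \ref{varvectbund}, $v_i$ corresponds to $\pi_i^*(E_1-E_0)$, where $E_1$ is the trivial line bundle over $T$ carrying the $\Gamma$-action $(e^{\ii\theta},z)\mapsto(e^{-\ii\theta},e^{\ii\theta}z)$. Restricting $v_i$ to $G/T\times_\Gamma\{a\}$ means evaluating this $\Gamma$-line bundle at the single point $t_i=a_i\in\{\pm1\}\subset T$. At $t_i=1$ the phase $e^{\ii\theta}=1$, so the $\Gamma$-action on the fiber is trivial and $i_a^*v_i=0$; at $t_i=-1$ we have $e^{\ii\theta}=-1$, so $\Gamma$ acts on the fiber of $E_1$ by negation, which is exactly the defining representation $\mathbb{C}_1$, giving the nontrivial class $u$. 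This matches the stated formula. The key subtlety to spell out is that the $\Gamma$-line bundle over the \emph{point} $a_i$, pulled back over $G/T$ and quotiented, is precisely $G/T\times_\Gamma\mathbb{C}_1$ when $a_i=-1$, hence equals $u$ after subtracting the trivial summand coming from $E_0$.

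Next I would treat $x_{ij}$, which by Definition \ref{varvectbund} is $(p_{ij}^*H)/\Gamma$ minus a trivial bundle, with $p_{ij}=p\circ\pi_{ij}$ and $p\colon T^2\to S^2$ the collapsing map sending the longitudinal and latitudinal circles $\{t_i=-1\}\cup\{t_j=-1\}$ to the south pole and $(1,1)$ to the north pole. Restricting to $G/T\times_\Gamma\{a\}$ fixes $(t_i,t_j)=(a_i,a_j)$, so I would track where $p$ sends this point: if $a_i=a_j=1$ the point is the north pole, where by the definition of $H$ in Definition the $\Gamma$-action on the fiber is trivial, giving $0$; if either $a_i=-1$ or $a_j=-1$ the point lies on one of the collapsed circles, hence maps to the south pole, where $\Gamma$ acts by negation, giving the nontrivial class $u$. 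The remaining routine check is that the south-pole fiber of $H$ with its negation $\Gamma$-action yields, after passing to $G/T\times_\Gamma(-)$, exactly $G/T\times_\Gamma\mathbb{C}_1$, i.e. the class $u$.

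The main obstacle I anticipate is bookkeeping rather than conceptual: one must be careful that the $\Gamma$-equivariant structure used throughout is the one induced from the antipodal action on $G/T\cong S^2$ together with the rotation-by-$\pi$ action on the target $S^2$ of $p$, and verify that the restriction of $H$ (with its specified action on the north and south fibers) genuinely descends to the trivial or nontrivial $\mathbb{RP}^2$-line bundle. In particular I would want to confirm that under $p$ the relevant point is sent to the south pole (not merely to some point where the action happens to be nontrivial), and that the identification $\widetilde{K}^0(G/T\times_\Gamma\{a\})\cong\widetilde{K}^0(\mathbb{RP}^2)\cong\mathbb{Z}_2$ is compatible with the class $u$ defined on the total space via $\pi^*$. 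Once these compatibilities are pinned down, both formulas fall out directly from the case analysis on the values of $a_i$ and $a_j$.
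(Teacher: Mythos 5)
Your proposal is correct and follows essentially the same route as the paper's own proof: both unwind Definition \ref{varvectbund}, restrict the defining equivariant line bundles to the fiber $G/T\times_\Gamma\{a\}\cong\mathbb{RP}^2$, and decide whether $\Gamma$ acts on the fiber trivially or by negation (via the phase $e^{i\theta_i}=a_i$ for $v_i$, and via whether $p_{ij}$ sends $(a_i,a_j)$ to the north or south pole for $x_{ij}$), identifying the resulting class with $0$ or $u$ accordingly. The compatibility checks you flag at the end are precisely the points the paper's argument relies on, and they hold as you expect.
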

\begin{proof}
	Recall that $v_i$ is represented by the reduced line bundle which is the quotient of $G/T\times T^n\times\mathbb{C}_1-G/T\times T^n\times\mathbb{C}_0$ by the $\Gamma$-action given by 
	\begin{align*}
		G/T\times T^n\times\mathbb{C}_n&\to G/T\times T^n\times\mathbb{C}_n\\
		(x, e^{i\theta_1}, \cdots, e^{i\theta_n}, z)&\mapsto (\gamma(x), e^{-i\theta_1}, \cdots, e^{-i\theta_n}, e^{in\theta_i}z)
	\end{align*}
	Thus $i_a^*v_i$ is represented by the quotient of $G/T\times\{a\}\times\mathbb{C}_1-G/T\times\{a\}\times\mathbb{C}_0$ by the restricted $\Gamma$-action. If $a_i=-1$, then $G/T\times\{a\}\times\mathbb{C}_1/\Gamma$ is a nontrivial line bundle over $\mathbb{RP}^2$, and $(G/T\times\{a\}\times\mathbb{C}_1-G/T\times\{a\}\times\mathbb{C}_0)/\Gamma$ represents $u\in K^0(\mathbb{RP}^2)$. If $a_i=1$, then $G/T\times\{a\}\times\mathbb{C}_1/\Gamma$ is a trivial line bundle over $\mathbb{RP}^2$, and $(G/T\times\{a\}\times\mathbb{C}_1-G/T\times\{a\}\times \mathbb{C}_0)/\Gamma$ represents $0\in K^0(\mathbb{RP}^2)$.
	
	As to the restriction of $x_{ij}$ at $a$, we first recall that by Definition \ref{varvectbund}, $x_{ij}=[p_{ij}^*H/\Gamma-G/T\times_\Gamma T^n\times\mathbb{C}]$, where $H$ is the canonical line bundle over $S^2$ and equipped with the $\mathbb{Z}_2$-action which descends to rotation by $\pi$ on $S^2$, acts on the fiber over the north pole trivially and that over the south pole by negation. We also recall that the map $p_{ij}: G/T\times T^n\to S^2$ is the composition of projection onto the torus which is the product of the $i$-th and $j$-th circles, and the map which collapses the union of the longitudinal and latitudinal circles $\{(e^{i\theta}, -1)\in T_i\times T_j|0\leq \theta\leq 2\pi\}\cup\{(-1, e^{i\theta})\in T_i\times T_j|0\leq \theta\leq 2\pi\}$, which includes $(-1, 1)$, $(1, -1)$ and $(-1, -1)$, to the south pole of $S^2$ and maps $(1, 1)$ to the north pole. Thus if $a_i=-1$ or $a_j=-1$, $\Gamma$ acts on $i_a^*p_{ij}^*H$ by 
	\begin{align*}
		G/T\times\{a\}\times\mathbb{C}&\to G/T\times\{a\}\times\mathbb{C}\\
		(x, a, z)&\mapsto (\gamma(x), a, -z)
	\end{align*} 
	whereas if $a_i=a_j=1$, $\Gamma$ acts on $i_a^*p_{ij}^*H$ by 
	\begin{align*}
		G/T\times\{a\}\times\mathbb{C}&\to G/T\times\{a\}\times\mathbb{C}\\
		(x, a, z)&\mapsto (\gamma(x), a, z).
	\end{align*}
	This establishes the claim about $i_a^*x_{ij}.$
\end{proof}

\begin{proposition}\label{restriction2}
	The images of $\{x_{ij}\}_{1\leq i<j\leq n}$, $u$ and $\{v_i\}_{1\leq i\leq n}$ under the restriction map
	\[\widetilde{i}^*: \widetilde{K}^0(G/T\times_\Gamma T^n)\to\bigoplus_{a\in A}\widetilde{K}^0(G/T\times_\Gamma \{a\})\cong \bigoplus_{a\in A}\widetilde{K}^0(\mathbb{RP}^2)\]
	are linearly independent in $\displaystyle\bigoplus_{a\in A}\widetilde{K}^0(\mathbb{RP}^2)$ viewed as a $\mathbb{Z}_2$-vector space of $2^n$ dimensions.
\end{proposition}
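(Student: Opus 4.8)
The plan is to make the restriction map completely explicit via Proposition \ref{restriction}, thereby reducing the statement to a transparent linear-independence claim about Boolean polynomials over $\mathbb{Z}_2$. First I would fix the identification of the target: since $\widetilde{K}^0(\mathbb{RP}^2)\cong\mathbb{Z}_2$ is generated by $u$, an element of $\bigoplus_{a\in A}\widetilde{K}^0(\mathbb{RP}^2)$ is nothing but a function $A\to\mathbb{Z}_2$ recording, slot by slot, the coefficient of $u$. Encoding each $a\in A$ by its sign pattern $\epsilon(a)=(\epsilon_1,\dots,\epsilon_n)\in\{0,1\}^n$, where $\epsilon_i=1$ precisely when $a_i=-1$, identifies $A$ with $\mathbb{Z}_2^n$ and the target with the $\mathbb{Z}_2$-vector space of functions $\mathbb{Z}_2^n\to\mathbb{Z}_2$, of dimension $2^n$. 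This space carries the standard square-free monomial basis $\{\prod_{i\in S}\epsilon_i : S\subseteq\{1,\dots,n\}\}$.

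Next I would compute the three families of images in this basis. Since $u=\pi^*(G/T\times_\Gamma\mathbb{C}_1-\mathbb{RP}^2\times\mathbb{C})$ and $\pi\circ i_a$ is the canonical identification $G/T\times_\Gamma\{a\}\cong\mathbb{RP}^2$, I would verify that $i_a^*u=u$ for every $a$, so that $\widetilde{i}^*(u)$ is the constant function $1$, i.e. the monomial indexed by $S=\emptyset$. By Proposition \ref{restriction}, $\widetilde{i}^*(v_i)$ is the function $a\mapsto\epsilon_i$, the degree-one monomial $\epsilon_i$; and $\widetilde{i}^*(x_{ij})$ is the function sending $a$ to $u$ exactly when $a_i=-1$ or $a_j=-1$, which over $\mathbb{Z}_2$ converts the logical OR into the polynomial $\epsilon_i+\epsilon_j+\epsilon_i\epsilon_j$.

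The independence then follows from a degree/triangularity argument in the monomial basis. Suppose a $\mathbb{Z}_2$-linear combination $\alpha\,\widetilde{i}^*(u)+\sum_i\beta_i\,\widetilde{i}^*(v_i)+\sum_{i<j}\gamma_{ij}\,\widetilde{i}^*(x_{ij})$ vanishes, with all coefficients in $\mathbb{Z}_2$. Reading off the coefficient of the degree-two monomial $\epsilon_i\epsilon_j$ forces $\gamma_{ij}=0$, since only $\widetilde{i}^*(x_{ij})$ contributes a term of that shape. With every $\gamma_{ij}=0$, the coefficient of each $\epsilon_i$ forces $\beta_i=0$, and finally the constant term forces $\alpha=0$. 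Hence all coefficients vanish, proving linear independence (note the number of classes, $\binom{n}{2}+n+1\le 2^n$, is consistent with this).

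The only step requiring genuine care — rather than a serious obstacle — is the bookkeeping that pins down $\widetilde{i}^*$ exactly: establishing $i_a^*u=u$ cleanly and correctly translating the OR of Proposition \ref{restriction} into the polynomial $\epsilon_i+\epsilon_j+\epsilon_i\epsilon_j$ over $\mathbb{Z}_2$. Once the three families are written in the square-free monomial basis, the top-degree monomials $\epsilon_i\epsilon_j$ are supplied \emph{only} by the $x_{ij}$, so the system is triangular and the independence is immediate.
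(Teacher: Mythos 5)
Your proof is correct, but it takes a genuinely different route from the paper's. The paper reduces linear independence to a pointwise parity claim: for every pair of index sets $I\subseteq\{1,\dots,n\}$ and $J\subseteq\{(i,j): 1\leq i<j\leq n\}$, not both empty, it produces some $a\in A$ at which $|\{i\in I: a_i=-1\}|+|\{(i,j)\in J: a_i=-1\text{ or }a_j=-1\}|$ is odd (so the corresponding combination of the $i^*v_i$ and $i^*x_{ij}$ is nonzero) and some $b\in A$ at which it is even (so the combination is not $\widetilde{i}^*u$); the point $b=(1,\dots,1)$ settles the second requirement, and an explicit case-by-case algorithm based on the parities of the incidence numbers $m_k=1+|\{(i,j)\in J: k=i\text{ or }k=j\}|$ settles the first. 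You instead identify $\bigoplus_{a\in A}\widetilde{K}^0(\mathbb{RP}^2)$ with the ring of $\mathbb{Z}_2$-valued functions on $\mathbb{Z}_2^n$, expand the images in the square-free monomial (algebraic normal form) basis, namely $\widetilde{i}^*u=1$, $\widetilde{i}^*v_i=\epsilon_i$, $\widetilde{i}^*x_{ij}=\epsilon_i+\epsilon_j+\epsilon_i\epsilon_j$, and finish by triangularity in degree, since the quadratic monomials $\epsilon_i\epsilon_j$ occur only in the images of the $x_{ij}$. Your version is shorter and eliminates the combinatorial casework entirely; notably, the paper's algorithm as written presupposes a chosen element $k_1\in I$ and so does not literally cover the case $I=\emptyset$, $J\neq\emptyset$ (a gap that is easily repaired by the same inclusion-exclusion trick it uses elsewhere), whereas your argument treats all combinations uniformly. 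What you pay for this is reliance on the standard fact that the $2^n$ square-free monomials are linearly independent as functions on $\mathbb{Z}_2^n$ (uniqueness of the algebraic normal form of a Boolean function); that deserves a one-line justification, e.g.\ a dimension count after observing that indicator functions of points are polynomials. What the paper's approach buys in exchange is constructive explicitness: for each would-be relation it exhibits a concrete point of $A$ witnessing its failure. Both arguments consume the same inputs, namely Proposition \ref{restriction} together with the small verification, which you rightly flag, that $i_a^*u=u$ for every $a\in A$ (the paper uses this tacitly in comparing combinations against $\widetilde{i}^*u$).
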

\begin{proof}
	It is sufficient to show that for any subsets $I\subseteq \{1, 2, \cdots, n\}$ and $J\subseteq\{(i, j)\in\mathbb{Z}^2|1\leq i<j\leq n\}$, 
	\begin{align*}
		\sum_{i\in I}i^*v_i+\sum_{(i, j)\in J}i^*x_{ij}&\neq 0,\ \text{and}\\
		\sum_{i\in I}i^*v_i+\sum_{(i, j)\in J}i^*x_{ij}&\neq i^*u.
	\end{align*}
	By Proposition \ref{restriction}, this is equivalent to, for any given subsets $I$ and $J$, the existence of $a\in A$ such that
	\[|\{i\in I|a_i=-1\}|+|\{(i, j)\in J|a_i=-1\text{ or }a_j=-1\}|\text{ is odd}\]
	and the existence of $b\in A$ such that
	\[|\{i\in I|b_i=-1\}|+|\{(i, j)\in J|b_i=-1\text{ or }b_j=-1\}|\text{ is even}.\]
	An obvious choice of $b$ is the one satisfying $b_i=1$ for all $1\leq i\leq n$. Now we shall show that for any given subsets $I$ and $J$, we can choose $a$ which satisfies the above condition.
	Define 
	\[m_k=1+|\{(i, j)\in J| k=i\text{ or }k=j\}|.\]
	we choose $a$ by the following algorithm.
	\begin{enumerate}
		\item Suppose that in $I$, there exists $k$ such that $m_{k}$ is odd. Then we can choose $a\in A$ where $a_{k}=-1$ and $a_i=1$ for $i\neq k$.
		\item Suppose that $m_k$ is even for all $k\in I$. To choose $a$ we do the following. 
		\begin{enumerate}
			\item If there exist $k_1$ and $k_2\in I$ such that $(k_1, k_2)\in J$, then choose $a$ where $a_{k_1}=a_{k_2}=-1$ and $a_i=1$ for $i\neq k_1$ and $i\neq k_2$. 
			\item Otherwise, choose any $\ell$ such that $(k_1, \ell)$ or $(\ell, k_1)$ is in $J$. Then $\ell\notin I$.
			\begin{enumerate}
				\item If $m_\ell-1$ is odd, then choose $a$ where $a_\ell=-1$ and $a_i=1$ for $i\neq \ell$.
				\item If $m_\ell-1$ is even, then choose $a$ where $a_\ell=a_{k_1}=-1$ and $a_i=1$ for $i\neq \ell$ and $i\neq k_1$.
			\end{enumerate}
		\end{enumerate}
	\end{enumerate}
\end{proof}

\begin{theorem}\label{mainthm}
	The $K$-theory ring $K^*(Y_n)$ is generated by the following: 
	\begin{itemize}
		\item $a_{ij}:=r_*(2x_{ij})$, 
		\item $b_{ij}:=r_*(ux_{ij})$, $b'_{ijk\ell}:=r_*(ux_{ij}x_{k\ell})$, 
		\item $c_{ijk}:=r_*(v_ix_{jk})$, $c'_{ijk\ell m}:=r_*(v_ix_{jk}x_{lm})$, 
		\item $d_{ijk\ell}:=r_*(x_{ij}x_{k\ell})$, $d'_{ijk\ell mq}:=r_*(x_{ij}x_{k\ell}x_{mq})$
		\item $e_i:=r_*(w_i)$, $e'_{ijk}:=r_*(w_ix_{jk})$, and 
		\item $f_i\in K^{-1}(Y_n)$, $1\leq i\leq 2^n-1-n-\binom{n}{2}$
	\end{itemize}
	for $1\leq i<j<k<\ell<m<q\leq n$. 
	Additively, $K^0(Y_n)$ is isomorphic to $\mathbb{Z}^{2^{n-1}}\oplus\mathbb{Z}_2^{2^n-1-n}$ and generated by 
		\begin{itemize}
			\item $1$, $\{r_*(2x_{i_1i_2})\}_{1\leq i_1<i_2\leq n}, \{r_*(x_{i_1i_2}x_{i_3i_4}\cdots x_{i_{2k-1}i_{2k}})|1\leq i_1<i_2<\cdots<i_{2k}\leq n,\ k> 1\}$ ($2^{n-1}$ generators for the free subgroup),
			\item $\{r_*(ux_{i_1i_2}x_{i_3i_4}\cdots x_{i_{2k-1}i_{2k}})|1\leq i_1<i_2<\cdots<i_{2k}\leq n,\ k\geq 1\}$ ($2^{n-1}-1$ 2-torsion generators), and
			\item $\{r_*(v_{i_1}x_{i_2i_3}x_{i_4i_5}\cdots x_{i_{2k}i_{2k+1}})|1\leq i_1<\cdots<i_{2k+1}\leq n,\ k\geq 1\}$ ($2^{n-1}-n$ 2-torsion generators).
	\end{itemize}
	while $K^{-1}(Y_n)$ is isomorphic to $\mathbb{Z}^{2^{n-1}}\oplus \mathbb{Z}_2^{2^n-1-n-\binom{n}{2}}$ and generated by 
	\begin{itemize}
		\item the 2-torsion generators $f_1, \cdots, f_{2^n-1-n-\binom{n}{2}}$, 
		\item $\{r_*(w_i)| 1\leq i\leq n\}$, $\{r_*(w_{i_1}x_{i_2i_3}\cdots x_{i_{2k}i_{2k+1}})|1\leq i_1<i_2<\cdots<i_{2k+1}\leq n, k\geq 1\}$ ($2^{n-1}$ free generators).
	\end{itemize}
	The relations among the generators $a_{ij}$, $b_{ij}$, $b'_{ijk\ell}$, $c_{ijk}$, $c'_{ijk\ell m}$, $d_{ijk\ell}$, $d'_{ijk\ell mq}$, $e_i$ and $e'_{ijk}$ can be read off from those asserted in Theorem \ref{premainthm} seeing that $r_*$ is a ring homomorphism, e.g. $a_{ij}a_{k\ell}=4d_{ijk\ell}$, $a_{ij}^2=0$, $b_{ij}c_{k\ell m}=0$. Any product of generators involving $f_i$ is 0, and $2f_i=0$.
\end{theorem}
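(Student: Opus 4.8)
The plan is to compare $Y_n$ with the blowup, writing $X:=G/T\times_\Gamma T^n$ and $A':=G/T\times_\Gamma A$ (the disjoint union of the $2^n$ projective planes that $r$ collapses), and running the two six–term exact sequences in $K$-theory of the pairs $(X,A')$ and $(Y_n,A)$. Since $r$ is a homeomorphism away from $A'$ and collapses each component of $A'$ to a point of $A$, it induces a homeomorphism $X/A'\xrightarrow{\cong}Y_n/A$, hence a ring isomorphism $r^*\colon K^*(Y_n,A)\xrightarrow{\cong}K^*(X,A')$ on the relative groups; the map $r_*$ of the preceding definition is, under this identification, the inverse of $r^*$ followed by the natural map $p\colon K^*(Y_n,A)\to K^*(Y_n)$. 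First I would compute $K^*(X,A')$. Because $K^{-1}(A')=0$ and $K^0(A')\cong\mathbb{Z}^{2^n}\oplus\mathbb{Z}_2^{2^n}$, the sequence of $(X,A')$ collapses to
\[0\to K^0(X,A')\to K^0(X)\xrightarrow{i^*}K^0(A')\xrightarrow{\partial}K^{-1}(X,A')\to K^{-1}(X)\to 0,\]
so $K^0(X,A')=\ker(i^*)$ while $K^{-1}(X,A')$ is an extension of the free group $K^{-1}(X)$ by $\mathrm{coker}(i^*)$. Using Propositions \ref{restriction} and \ref{restriction2} together with $u^2=0$ in $K^0(\mathbb{RP}^2)$ (so that every product of two or more $x_{ij}$ restricts to $0$ on each $G/T\times_\Gamma\{a\}$), I would read off that $i^*$ sends $1$ to the diagonal of the free summand, sends exactly $u,v_i,x_{ij}$ to the $1+n+\binom{n}{2}$ linearly independent torsion classes of Proposition \ref{restriction2}, and kills everything else. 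This identifies $\ker(i^*)$ with the span of the $2x_{ij}$, the longer even products of $x$'s, and the torsion products $ux_{\cdots},v_ix_{\cdots}$, giving $K^0(X,A')\cong\mathbb{Z}^{2^{n-1}-1}\oplus\mathbb{Z}_2^{2^n-1-n}$, and yields $\mathrm{coker}(i^*)\cong\mathbb{Z}^{2^n-1}\oplus\mathbb{Z}_2^{2^n-1-n-\binom{n}{2}}$.

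Next I would feed these into the exact sequence of $(Y_n,A)$,
\[0\to K^0(Y_n,A)\to K^0(Y_n)\xrightarrow{j^*}K^0(A)\xrightarrow{\partial_Y}K^{-1}(Y_n,A)\to K^{-1}(Y_n)\to 0,\]
via $K^*(Y_n,A)\cong K^*(X,A')$. Since $Y_n$ is connected, a virtual bundle has constant rank on the $2^n$ points of $A$, so $\mathrm{im}(j^*)$ is the rank-one diagonal in $K^0(A)\cong\mathbb{Z}^{2^n}$; the sequence $0\to K^0(Y_n,A)\to K^0(Y_n)\to\mathbb{Z}\to0$ then splits off the class $1$, giving $K^0(Y_n)\cong\mathbb{Z}^{2^{n-1}}\oplus\mathbb{Z}_2^{2^n-1-n}$ with free generators $1,a_{ij},d$'s and torsion generators $b,b',c,c'$. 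For $K^{-1}(Y_n)=\mathrm{coker}(\partial_Y)$ I would use the map of the two sequences induced by $r$ to identify $\partial_Y$ with $\partial$ applied to the free summand of $K^0(A')$; this image is exactly the free summand $\mathbb{Z}^{2^n-1}$ of $\mathrm{coker}(i^*)$, so passing to the cokernel leaves the torsion $\mathbb{Z}_2^{2^n-1-n-\binom{n}{2}}$ intact and cuts the free rank down to $2^{n-1}$. The surviving free part supplies $e_i=r_*(w_i)$ and the $e'$'s, and the surviving torsion supplies the $f_i$.

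For the ring structure, the generators produced by $r_*$ inherit every relation of Theorem \ref{premainthm}: the map $r_*\colon\ker(i^*)\to K^*(Y_n)$ is a (non-unital) ring homomorphism, being $p$ precomposed with the inverse of the ring isomorphism $r^*$ and the isomorphism $K^*(X,A')\cong\ker(i^*)$, so relations in the subring $\ker(i^*)\subseteq K^*(X)$ transport verbatim, e.g.\ $a_{ij}a_{k\ell}=r_*(4x_{ij}x_{k\ell})=4d_{ijk\ell}$ and $a_{ij}^2=4b_{ij}=0$. It remains to show every product involving an $f_i$ vanishes. Writing $f_i=p(\xi)$ and a reduced generator $g=p(\zeta)$ (all generators except $1$ are reduced), multiplicativity gives $f_ig=p(\xi\zeta)$ for the relative product $\xi\zeta$. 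Transporting by $r^*$, the factor corresponding to $f_i$ lies in $\mathrm{im}(\partial)$, and the module identity for the boundary map rewrites $\xi\zeta$ as $\partial$ of a class landing in $i^*\,\mathrm{im}(p_X)=0$ (when the product is even) or in $K^{-1}(A')=0$ (when it is odd); either way $\xi\zeta=0$, so $f_ig=0$, and $2f_i=0$ because $f_i$ comes from the $2$-torsion of $\mathrm{coker}(i^*)$.

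The main obstacle will be the precise bookkeeping of torsion through the boundary maps, and in particular proving that the new odd torsion of $K^{-1}(Y_n)$ is exactly $\mathbb{Z}_2^{2^n-1-n-\binom{n}{2}}$. This rests on showing that $\partial_Y(K^0(A))$ meets the torsion of $K^{-1}(Y_n,A)$ trivially — equivalently, that it is the full free summand of $\mathrm{coker}(i^*)$ — so that passing to the cokernel creates no spurious $2$-torsion. The independent fact that the free ranks of $K^*(Y_n)$ agree with those of $K^*(X)$ (the two spaces are rationally equivalent, since collapsing the rationally acyclic projective planes is a rational homology isomorphism) pins the free rank of each $K^i(Y_n)$ at $2^{n-1}$ and forces the extensions to behave as claimed.
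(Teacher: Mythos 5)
Your proposal is correct, and its additive skeleton coincides with the paper's: both compute $K^*(G/T\times_\Gamma T^n, G/T\times_\Gamma A)\cong K^*(Y_n,A)$ from the six-term sequence of the pair, feeding in Theorem \ref{premainthm} and Propositions \ref{restriction} and \ref{restriction2} exactly as you do (the paper passes from the relative group to $K^*(Y_n)$ via the splitting of Remark \ref{reducedrelative} rather than your explicit ladder of the two sequences, but the bookkeeping is the same, including identifying the image of the boundary map on $K^0(A)$ with the image under $\partial$ of the free summand of $K^0(G/T\times_\Gamma A)$). Where you genuinely diverge is the proof that every product involving $f_i$ vanishes. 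The paper argues in two cases: products landing in $K^0(Y_n,A)$ die because $j^*$ is injective there (as $K^{-1}(G/T\times_\Gamma A)=0$), while products $f_i\,r_*(\alpha)$ landing in $K^{-1}(Y_n,A)$, where $j^*$ is not injective, require a separate filtration argument comparing Atiyah--Hirzebruch spectral sequences of $(Y_n,A)$, $G/T\times_\Gamma T^n$ and $G/T\times_\Gamma A$ (the torsion sits in filtration $3$, the product in filtration $\geq 5$). Your argument --- write $f_i=\partial(y)$, use that the relative product satisfies $\xi\cdot\zeta=\xi\cdot j^*\zeta$, then apply the projection formula $\partial(y)\cdot j^*\zeta=\partial(y\cdot i^*j^*\zeta)=0$ since $i^*\circ j^*=0$ --- treats both parities uniformly, avoids spectral sequences entirely, and proves the stronger statement that these products vanish already in $K^*(Y_n,A)$; that is a real simplification over the paper's route. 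One caveat: your closing claim that rational equivalence of $Y_n$ with the blowup ``forces the extensions to behave as claimed'' does not actually do the work there. The free rank of $K^{-1}(Y_n)\cong K^{-1}(Y_n,A)/\text{im}(\partial_Y)$ comes for free from rank additivity (tensoring with $\mathbb{Q}$ is exact), and rank information alone cannot exclude spurious torsion in a quotient --- quotienting $\mathbb{Z}$ by the rank-one subgroup $2\mathbb{Z}$ has the predicted rank zero yet creates $2$-torsion. What settles it is the claim you correctly isolate as the crux: $\text{im}(\partial_Y)$ is precisely the free direct summand of $\text{coker}(i^*)$. This follows because $\text{im}(i^*)=\mathbb{Z}\cdot(1,\dots,1)\oplus V$, with $V$ the span of $i^*u$, $i^*v_i$, $i^*x_{ij}$ from Proposition \ref{restriction2}, splits compatibly with the decomposition $K^0(G/T\times_\Gamma A)\cong\mathbb{Z}^{2^n}\oplus\mathbb{Z}_2^{2^n}$ into ranks and reduced classes, so the image of the free summand in the cokernel is itself a direct summand with purely torsion complement; with that verification supplied, your proof is complete.
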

\begin{remark}\label{reducedrelative}
	Observe that $K^*\left(G/T\times_\Gamma T^n, G/T\times_\Gamma A\right)\cong K^*(Y_n, A)$. Using the long exact sequence for the pair $(Y_n, A)$, we have 
	\begin{eqnarray}\label{eqn5}K^*(Y_n, A)\cong \widetilde{K}^*(Y_n)\oplus \text{im}(d: \widetilde{K}^0(A)\to K^{-1}(Y_n, A)).\end{eqnarray}
	In view of the above isomorphism, we will sometimes regard elements of $\widetilde{K}^*(Y_n)$ as those of the relative $K$-theory $K^*(Y_n, A)$ and $K^*(G/T\times_\Gamma T^n, G/T\times_\Gamma A)$. The same goes for cohomology, as we also have
	\[H^*(Y_n, A, \mathbb{Z})\cong\widetilde{H}^*(Y_n)\oplus\text{im}(d: \widetilde{H}^0(A, \mathbb{Z})\to H^1(Y_n, A, \mathbb{Z})).\]
\end{remark}
\begin{proof}
	Note that the quotients $Y_n/A$ and $G/T\times_\Gamma T^n/(G/T\times_\Gamma A)$ are homeomorphic. In view of this, we will first compute $K^*(G/T\times_\Gamma T^n, G/T\times_\Gamma A)\cong K^*(Y_n, A)$, and then $K^*(Y_n)$ using the long exact sequence for the pair $(Y_n, A)$. Consider the following $K$-theoretic long exact sequence associated with the pair $\left(G/T\times_\Gamma T^n, G/T\times_\Gamma A\right)$ (rolled up in a loop due to the Bott periodicity).
	\begin{eqnarray*}
		\resizebox{15cm}{!}{\xymatrix@+4pc{K^0(G/T\times_\Gamma T^n, G/T\times_\Gamma A)\ar[r]^{j^*}& \widetilde{K}^0(G/T\times_\Gamma T^n)\ar[r]^{i^*}&\widetilde{K}^0(\coprod_{a\in A}\mathbb{RP}^2)\ar[d]^{d}\\
		\bigoplus_{a\in A}K^{-1}(\mathbb{RP}^2)\ar[u]^d& K^{-1}(G/T\times_\Gamma T^n)\ar[l]^{i^*}& K^{-1}(G/T\times_\Gamma T^n, G/T\times_\Gamma A)\ar[l]^{j^*}}}
	\end{eqnarray*}
	By exactness, since $\bigoplus_{a\in A}K^{-1}(\mathbb{RP}^2)=0$, $K^0(G/T\times_\Gamma T^n,  G/T\times_\Gamma A)$ is $\text{ker}(i^*: \widetilde{K}^0(G/T\times_\Gamma T^n)\to\widetilde{K}^0\left(\coprod_{a\in A}\mathbb{RP}^2\right)$, which by Theorem \ref{premainthm} and Proposition \ref{restriction} is, as a ring, generated by $r_*(2x_{ij})$, $r_*(ux_{ij})$, $r_*(ux_{ij}x_{k\ell})$, $r_*(v_{i}x_{jk})$, $r_*(v_{i}x_{jk}x_{\ell m})$, $r_*(x_{ij}x_{k\ell})$, and $r_*(x_{ij}x_{k\ell}x_{mq})$ for $1\leq i<j<k<\ell<m<q\leq n$, and additively isomorphic to $\mathbb{Z}^{2^{n-1}-1}\oplus \mathbb{Z}_2^{2^n-1-n}$ with the following generators
		\begin{itemize}
			\item 1, $\{r_*(2x_{i_1i_2})\}_{1\leq i_1<i_2\leq n}, \{r_*(x_{i_1i_2}x_{i_3i_4}\cdots x_{i_{2k-1}i_{2k}})|1\leq i_1<i_2<\cdots<i_{2k}\leq n,\ k>1\}$ ($2^{n-1}$ generators for the free subgroup),
			\item $\{r_*(ux_{i_1i_2}x_{i_3i_4}\cdots x_{i_{2k-1}i_{2k}})|1\leq i_1<i_2<\cdots<i_{2k}\leq n,\ k\geq 1\}$ ($2^{n-1}-1$ 2-torsion generators), and
			\item $\{r_*(v_{i_1}x_{i_2i_3}x_{i_4i_5}\cdots x_{i_{2k}i_{2k+1}})|1\leq i_1<\cdots<i_{2k+1}\leq n,\ k\geq 1\}$ ($2^{n-1}-n$ 2-torsion generators).
	\end{itemize}
	The reduced $K$-theory $\widetilde{K}^0\left(\coprod_{a\in A}\mathbb{RP}^2\right)$ is isomorphic to  $\bigoplus_{a\in A}\widetilde{K}^0(\mathbb{RP}^2)\oplus \mathbb{Z}^{2^n-1}\cong \mathbb{Z}_2^{2^n}\oplus \mathbb{Z}^{2^n-1}$, where the last summand is generated freely by the trivial line bundle over each of the $2^n-1$ copies of $\mathbb{RP}^2$ which do not contain the basepoint. The map $i^*$ is the reduced restriction $\widetilde{i}^*$ to $\bigoplus_{a\in A}\widetilde{K}^0(\mathbb{RP}^2)$ as in Proposition \ref{restriction2} and the zero map to the summand $\mathbb{Z}^{2^n-1}$. By Proposition \ref{restriction2} again, $\text{im}(i^*)$ is a $1+n+\binom{n}{2}$-dimensional $\mathbb{Z}_2$-vector subspace of $\bigoplus_{a\in A}\widetilde{K}^0(\mathbb{RP}^2)$. By exactness, $\text{im}(d)\cong\left(\bigoplus_{a\in A}\widetilde{K}^0(\mathbb{RP}^2)\oplus\mathbb{Z}^{2^n-1}\right)/\text{im}(i^*)$, which is isomorphic to $\mathbb{Z}_2^{2^n-1-n-\binom{n}{2}}\oplus \mathbb{Z}^{2^n-1}$. We denote a basis of $\mathbb{Z}_2^{2^n-1-n-\binom{n}{2}}$by $f_1, \cdots, f_{2^n-1-n-\binom{n}{2}}$. The short exact sequence 
	\[0\longrightarrow \text{im}(d){\longrightarrow} K^{-1}\left(G/T\times_\Gamma T^n, G/T\times_\Gamma A\right)\stackrel{j^*}{\longrightarrow} K^{-1}(G/T\times_\Gamma T^n)\longrightarrow 0\]
	extracted from the above exact sequence and by Theorem \ref{premainthm} splits, as $j^*\circ r_*$ is the identity map on $K^{-1}(G/T\times_\Gamma T^n)$. It follows that $K^{-1}(G/T\times_\Gamma T^n, G/T\times_\Gamma \{a\})\cong\mathbb{Z}^{2^{n-1}}\oplus\mathbb{Z}_2^{2^n-1-n-\binom{n}{2}}\oplus \mathbb{Z}^{2^n-1}$ with the following generators
	\begin{itemize}
		\item $f_1, \cdots, f_{2^n-1-n-\binom{n}{2}}$, coming from the 2-torsion part of $\text{im}(d)$, 
		\item $\{r_*(w_i)| 1\leq i\leq n\}$, $\{r_*(w_{i_1}x_{i_2i_3}\cdots x_{i_{2k}i_{2k+1}})|1\leq i_1<i_2<\cdots<i_{2k+1}\leq n, k\geq 1\}$ ($2^{n-1}$ free generators), and 
		\item those generating the last summand $\mathbb{Z}^{2^n-1}$, which is the image under $d$ of the trivial vector bundles over each of the $2^n-1$ copies of $\mathbb{RP}^2$ which do not contain the basepoint.
	\end{itemize}
	Note that the second summand of the RHS of the isomorphism (\ref{eqn5}) in Remark \ref{reducedrelative} corresponds to the summand $\mathbb{Z}^{2^n-1}$ of $K^{-1}\left(G/T\times_\Gamma T^n, G/T\times_\Gamma A\right)$. Thus $\widetilde{K}^*(Y_n)$ is generated by the above bulleted list of generators, save the generators in the last bullet point. 
	
	It remains to show that any product of the above generators involving $f_i$ is 0. Note that $j^*: K^0(G/T\times_\Gamma T^n, G/T\times_\Gamma A)\to \widetilde{K}^0(G/T\times_\Gamma T^n)$ is injective because by exactness its kernel is the image of $d$ on $\bigoplus_{a\in A}K^{-1}(\mathbb{RP}^2)$, which is 0. Note that $f_if_j$ and $f_ir_*(w_j)$ are in $K^0(G/T\times_\Gamma T^n, G/T\times_\Gamma A)\cong K^0(Y_n, A)$ and their images under $j^*$ are 0 because $f_i\in \text{im}(d)$ and $j^*(f_i)=0$. It follows that $f_if_j=f_ir_*(w_j)=0$.	 To show that $f_ir_*(\alpha)=0$ where $r_*(\alpha)\in \widetilde{K}^0(Y_n)$, we shall consider the Atiyah-Hirzebruch spectral sequences for $(Y_n, A)$, $G/T\times_\Gamma T^n$ and $G/T\times_\Gamma A$. Note that the spectral sequence for $G/T\times_\Gamma A=\coprod_{a\in A}\mathbb{RP}^2$ collapses on the $E_2$-page. So does the spectral sequence for $G/T\times_\Gamma T^n$ as the Chern character is an isomorphism by Proposition \ref{ringiso}. In particular, if $\text{ch}(\beta)\in H^n(G/T\times_\Gamma T^n, K^0(\text{pt}))=E_2^{n, 0}$, then it survives to $E_\infty^{n, 0}$, which does not have any extension problem, and $\text{ch}(\beta)$ corresponds to $\beta$. From the long exact sequence of the spectral sequences for the pair $(G/T\times_\Gamma T^n, G/T\times_\Gamma A)$
	\[\cdots\longrightarrow E_r^{p, q}(Y_n, A)\stackrel{j^*}{\longrightarrow} E_r^{p, q}(G/T\times_\Gamma T^n)\stackrel{i^*}{\longrightarrow} E_r^{p, q}(G/T\times_\Gamma A)\stackrel{d}{\longrightarrow} E_r^{p+1, q}(Y_n, A)\longrightarrow\cdots,\]
	we can see that the spectral sequence for $(Y_n, A)$ also collapses on the $E_2$-page. Since $f_i\in\text{im}(d: \widetilde{K}^0(G/T\times_\Gamma A)\to K^{-1}(Y_n, A))$, it corresponds to an element 
	\[\widetilde{f}_i\in \text{im}(d: E_2^{2, 0}(G/T\times_\Gamma A)=H^2(G/T\times_\Gamma A, \mathbb{Z})\to E_2^{3, 0}(Y_n, A)=H^3(Y_n, A, \mathbb{Z})).\]
	Similarly, $r_*(\alpha)\in\widetilde{K}^0(Y_n)$ corresponds to an element $\widetilde{\alpha}\in E_2^{p, 0}(Y_n, A)=H^p(Y_n, A, \mathbb{Z})$ for some positive even number $p$. On the one hand, $f_ir_*(\alpha)$ is a 2-torsion of $K^{-1}(Y_n, A)$ and so $f_ir_*(\alpha)\in\text{span}_{\mathbb{Z}_2}\{f_1, f_2, \cdots, f_{2^n-1-n-\binom{n}{2}}\}$, which corresponds to an element in $E_2^{3, 0}(Y_n, A)$. On the other hand, $f_ir_*(\alpha)$ also corresponds to $\widetilde{f}_i\cdot\widetilde{\alpha}\in E_2^{p+3, 0}(Y_n, A)$. Thus $f_ir_*(\alpha)$ can only be 0. This completes the whole proof of the theorem.

\end{proof}
\begin{corollary}\label{cohring}
	The $K$-theory ring $K^*(Y_n)$ is isomorphic to the integral cohomology ring $H^*(Y_n, \mathbb{Z})$ through the Chern character map. In particular, we have 
	\[H^i(Y_n, \mathbb{Z})=\begin{cases}
		\mathbb{Z}&\ \ i=0, \\ 
		0&\ \ i=1, \\ 
		\mathbb{Z}^{\binom{n}{2}}&\ \ i=2, \\ 
		\mathbb{Z}^n\oplus\mathbb{Z}_2^{2^n-1-n-\binom{n}{2}}&\ \ i=3, \\
		\mathbb{Z}^{\binom{n}{i}}\oplus\mathbb{Z}_2^{\binom{n+1}{i-1}}&\ \ i\geq 4\text{ even,}\\ 
		\mathbb{Z}^{\binom{n}{i-2}}&\ \ i\geq 5\text{ odd}.
		\end{cases} \]	
\end{corollary}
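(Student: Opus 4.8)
The plan is to show that the ``integral Chern character'' of Section \ref{coblowup} extends to a ring isomorphism $\text{ch}\colon K^*(Y_n)\to H^*(Y_n,\mathbb{Z})$, and then to recover the displayed cohomology groups by sorting the generators of $K^*(Y_n)$ listed in Theorem \ref{mainthm} according to the cohomological degree in which their Chern characters lie. The content is the isomorphism; the explicit groups are then bookkeeping.

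First I would define $\text{ch}$ on the generators. Each of $a_{ij}, b_{ij}, b'_{ijk\ell}, c_{ijk}, c'_{ijk\ell m}, d_{ijk\ell}, d'_{ijk\ell mq}$ is $r_*$ applied to a product of the reduced line bundles $x_{ij}, u, v_i$, each of which satisfies the hypothesis $c_1(L)^2=0$ required in Section \ref{coblowup}, so $\text{ch}$ is defined on them; moreover, since $r_*$ is built from the homeomorphism $Y_n/A\cong (G/T\times_\Gamma T^n)/(G/T\times_\Gamma A)$ followed by collapsing a trivialized bundle, and $c_1$ commutes with pullback and with such collapsing, $\text{ch}$ commutes with $r_*$, i.e.\ $\text{ch}(r_*\alpha)=r_*(\text{ch}\,\alpha)$ for $\alpha\in\ker(i^*)$. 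For the remaining generators I would set $\text{ch}(e_i)=r_*(\text{ch}\,w_i)$ and $\text{ch}(e'_{ijk})=r_*(\text{ch}(w_ix_{jk}))$ using the free classes $\text{ch}(w_i)\in H^3$ of Remark \ref{H3gengen}, and $\text{ch}(f_i)=\widetilde f_i\in\text{Tors}\,H^3(Y_n,\mathbb{Z})$, the class corresponding to $f_i$ in the collapsing Atiyah--Hirzebruch spectral sequence of the pair identified in the proof of Theorem \ref{mainthm}.

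Next I would verify that $\text{ch}$ is a ring homomorphism. For two generators both of the form $r_*\alpha$, multiplicativity descends from the blowup: since $r_*$ is a ring homomorphism (Theorem \ref{mainthm}; the same holds in cohomology) and $\text{ch}$ is multiplicative on the blowup (Proposition \ref{ringiso}), one has $\text{ch}(r_*\alpha)\,\text{ch}(r_*\beta)=r_*\!\big(\text{ch}\,\alpha\cdot\text{ch}\,\beta\big)=r_*\,\text{ch}(\alpha\beta)=\text{ch}(r_*\alpha\cdot r_*\beta)$, which handles every product among $a_{ij}, b_{ij}, b', c, c', d, d', e_i, e'_{ijk}$ at once, including the vanishing ones such as $e_ie_j=r_*(w_iw_j)=0$ and $b_{ij}c_{k\ell m}=r_*(uv_kx_{ij}x_{\ell m})=0$. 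The only generators not of this form are the torsion classes $f_i$, every product of which vanishes in $K^*(Y_n)$ by Theorem \ref{mainthm}; here I would invoke the collapse and multiplicativity of the Atiyah--Hirzebruch spectral sequence of $(Y_n,A)$ to see that the cup product $\text{ch}(f_i)\,\text{ch}(\beta)=\widetilde f_i\cup\widetilde\beta$ is the associated graded of the zero class $f_i\,r_*(\beta)$, and therefore vanishes. For bijectivity, the homeomorphism $Y_n/A\cong (G/T\times_\Gamma T^n)/(G/T\times_\Gamma A)$ identifies $\text{ch}$ on $K^*(Y_n,A)\to H^*(Y_n,A)$ with $\text{ch}$ on the relative groups of the blowup; comparing the long exact sequences of the pair $(G/T\times_\Gamma T^n, G/T\times_\Gamma A)$, whose absolute terms carry Chern-character isomorphisms by Proposition \ref{ringiso} and by the corresponding statement for $G/T\times_\Gamma A=\coprod_{a\in A}\mathbb{RP}^2$, a five-lemma argument shows the relative $\text{ch}$ is an isomorphism, and restricting to the $\widetilde K^*(Y_n)$-summand of Remark \ref{reducedrelative} and adjoining $H^0=\mathbb{Z}$ gives the absolute isomorphism. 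Equivalently, one may simply rerun the computation of Theorem \ref{mainthm} in integral cohomology, with $\text{ch}$ identifying all of its inputs (the blowup via Proposition \ref{ringiso}, the restrictions via Propositions \ref{restriction} and \ref{restriction2}), so that the output is $H^*(Y_n,\mathbb{Z})$ with generating set the Chern characters of the generators of $K^*(Y_n)$.

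Finally I would read off the groups. Applying $\text{ch}$ to the generators of $K^0(Y_n)$: the $x$-products of length $2k$ give free classes in $H^{2k}$ (namely $\mathbb{Z}^{\binom n2}$ in degree $2$ from the $a_{ij}$, and $\mathbb{Z}^{\binom ni}$ in each even $i\ge4$), while the $u$- and $v$-type classes give $2$-torsion in $H^{2+2k}$, contributing in each even degree $i\ge4$ a total of $\binom{n}{i-2}+\binom{n}{i-1}=\binom{n+1}{i-1}$ copies of $\mathbb{Z}_2$. Applying $\text{ch}$ to the generators of $K^{-1}(Y_n)$: the $n$ classes $e_i$ and the $2^n-1-n-\binom n2$ torsion classes $f_i$ all land in $H^3$, giving $\mathbb{Z}^n\oplus\mathbb{Z}_2^{\,2^n-1-n-\binom n2}$, while the $w_ix$-products of degree $2k+3$ give free classes $\mathbb{Z}^{\binom{n}{i-2}}$ in each odd $i\ge5$ with no torsion. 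Collecting by degree reproduces the stated formula for $H^i(Y_n,\mathbb{Z})$. I expect the crux to be the ring-homomorphism verification for products involving the torsion generators $f_i$: these are the only generators not pulled back by $r_*$ from the blowup, so their multiplicative behaviour is not controlled by Proposition \ref{ringiso}, and for large $n$ the vanishing of the corresponding cohomology products is not forced by dimension but must be extracted from the multiplicative structure of the spectral sequence of the pair.
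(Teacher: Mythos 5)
Your proposal is correct and follows the paper's strategy in all essentials: bijectivity of the integral Chern character is obtained by the five lemma applied to the two rolled-up long exact sequences of the pair $(G/T\times_\Gamma T^n, G/T\times_\Gamma A)$, with Proposition \ref{ringiso} and the case of $\coprod_{a\in A}\mathbb{RP}^2$ feeding the absolute terms; multiplicativity is checked generator by generator; and the final degree bookkeeping is identical. The one place you genuinely diverge is the vanishing of products involving the torsion classes $f_i$. The paper proves $\text{ch}(f_i)\text{ch}(f_j)=0$ (and likewise for $f_i$ against the $w$-type generators) from the injectivity of $j^*$ in the relevant cohomological degree (using $H^{\geq 3}(G/T\times_\Gamma A,\mathbb{Z})=0$), and proves $\text{ch}(f_i)\text{ch}(r_*(\alpha))=0$ by noting this class is $2$-torsion lying in $H^k(Y_n,\mathbb{Z})$ for odd $k\geq 5$, which the (ring-structure-free) additive computation shows is torsion-free; you instead transport the $K$-theoretic relations $f_if_j=f_ir_*(\beta)=0$ of Theorem \ref{mainthm} to cohomology through the multiplicative structure of the collapsed Atiyah--Hirzebruch spectral sequence of the pair. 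Your route is valid --- it mirrors the argument the paper itself uses \emph{inside} the proof of Theorem \ref{mainthm} to establish those $K$-theoretic relations --- but it silently assumes that the integral Chern character of each generator coincides with its associated graded class in $E_\infty=E_2$ (a filtration compatibility that should be stated and checked, e.g.\ via naturality under $j^*$ and injectivity of $j^*$ in even degrees), whereas the paper's torsion-freeness argument needs no such input; your closing claim that the vanishing ``must be extracted'' from the spectral sequence is therefore too strong. A second small gap: your blanket identity $\text{ch}(r_*\alpha)\text{ch}(r_*\beta)=r_*(\text{ch}\,\alpha\cdot\text{ch}\,\beta)$ presumes the cohomological $r_*$ is multiplicative, which holds only because all such products land in degrees $\geq 4$ where $\ker(j^*)=\text{im}(d)=0$; making this explicit is also what absorbs the ambiguity of $\text{ch}(r_*(w_i))$ up to the span of the $\text{ch}(f_j)$, the point the paper treats separately in item (4) of its proof.
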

\begin{proof}
	Consider the Chern character maps mapping from the rolled-up $K$-theoretic long exact sequence in the proof of Theorem \ref{mainthm} to the following rolled-up cohomological long exact sequence.
	\begin{eqnarray*}
		\resizebox{15cm}{!}{\xymatrix@+4pc{H^\text{even}(G/T\times_\Gamma T^n, G/T\times_\Gamma A)\ar[r]^{j^*}& \widetilde{H}^\text{even}(G/T\times_\Gamma T^n)\ar[r]^{i^*}&\widetilde{H}^\text{even}\left(\coprod_{a\in A}\mathbb{RP}^2\right)\ar[d]^{d}\\
		\bigoplus_{a\in A}H^{\text{odd}}(\mathbb{RP}^2)\ar[u]^d& H^{\text{odd}}(G/T\times_\Gamma T^n)\ar[l]^{i^*}& H^{\text{odd}}(G/T\times_\Gamma T^n, G/T\times_\Gamma A)\ar[l]^{j^*}}}
	\end{eqnarray*}
	The Chern character map $K^*(G/T\times_\Gamma T^n)\to H^*(G/T\times_\Gamma T^n, \mathbb{Z})$ is a ring isomorphism by Proposition \ref{ringiso}, and so is the Chern character map $\widetilde{K}^*\left(\coprod_{a\in A}\mathbb{RP}^2\right)\to \widetilde{H}^*\left(\coprod_{a\in A}\mathbb{RP}^2, \mathbb{Z}\right)$. The Chern character map
	\[\text{ch}: K^*(G/T\times_\Gamma T^n, G/T\times_\Gamma A)\to H^*(G/T\times_\Gamma T^n, G/T\times_\Gamma A)\]
	is a group isomorphism by the 5-lemma and can be described using the commutativity of the following diagram.
	\begin{eqnarray*}
	{\xymatrix{\widetilde{K}^{*-1}(G/T\times_\Gamma A)\ar[r]^d\ar[d]^{\text{ch}}&K^*(G/T\times_\Gamma T^n, G/T\times_\Gamma A)\ar[r]^{j^*}\ar[d]^{\text{ch}}&\widetilde{K}^*(G/T\times_\Gamma T^n)\ar[d]^{\text{ch}}\\ \widetilde{H}^{*-1}(G/T\times_\Gamma A)\ar[r]^d&H^*(G/T\times_\Gamma T^n, G/T\times_\Gamma A)\ar[r]^{j^*}&\widetilde{H}^*(G/T\times_\Gamma T^n)}}
	\end{eqnarray*}
	We describe the Chern character more precisely as follows. For generators $r_*(\alpha)$ of $K^0(G/T\times_\Gamma T^n, G/T\times_\Gamma A)\cong K^0(Y_n, A)$, which are products of the generators in the first four bulleted points in the statement of Theorem \ref{mainthm} and where $\alpha\in \text{ker}(i^*: K^0(G/T\times_\Gamma T^n)\to K^0(G/T\times_\Gamma A))$, we have 
	\[\text{ch}(r_*(\alpha))=(j^*)^{-1}(\text{ch}(\alpha)).\] 
	Note that $(j^*)^{-1}$ is well-defined here because $j^*: H^{\text{even}}(G/T\times_\Gamma T^n, G/T\times_\Gamma A)\to \widetilde{H}^{\text{even}}(G/T\times_\Gamma T^n)$ is injective as its kernel is by exactness the image of $\widetilde{H}^\text{odd}(G/T\times_\Gamma A)=0$ under $d$. Similarly we have 
	\[\text{ch}(r_*(w_ix_{j_1j_2}\cdots x_{j_{2k-1}j_{2k}}))=(j^*)^{-1}\text{ch}(w_ix_{j_1j_2}\cdots x_{j_{2k-1}j_{2k}})\] 
	for $k\geq 1$. We also have
	\[\text{ch}(r_*(w_i))=\pi_i^*(j^*)^{-1}\text{ch}(w),\]
	where $(j^*)^{-1}\text{ch}(w)$ is well-defined because for $n=1$, 
	\[j^*: H^3(G/T\times_\Gamma T, G/T\times_\Gamma A)\cong H^3(G, A)\to H^3(G/T\times_\Gamma T, \mathbb{Z})\]
	is an isomorphism (both cohomology groups are generated by the `volume forms' of $G$ and $G/T\times_\Gamma T$). We do not rewrite $\text{ch}(r_*(w_i))$ as $(j^*)^{-1}\text{ch}(w_i)$ because for $n\geq 3$, $(j^*)^{-1}$ is only defined up to $\text{ker}j^*$, which is the 2-torsion subgroup generated by $\text{ch}(f_1), \cdots, \text{ch}(f_{2^n-1-n-\binom{n}{2}})$. Suppose $f_i=du_i$ for $u_i\in \widetilde{K}^0(G/T\times_\Gamma A)$. Then 
	\[\text{ch}(f_i)=d\text{ch}(u_i)\in H^3(Y_n, A).\]
	Now we shall show that ch is a ring homomorphism and hence a ring isomorphism. This can be checked on the generators of $K^*(Y_n)$.
	\begin{enumerate}
		\item\label{fij} By Theorem \ref{mainthm}, $f_if_j=0$ and so $\text{ch}(f_if_j)=0$. The product $\text{ch}(f_i)\text{ch}(f_j)$ is also 0. That is because 
		\[j^*(\text{ch}(f_i)\text{ch}(f_j))=\text{ch}(j^*f_i)\text{ch}(j^*f_j)=\text{ch}(0)\text{ch}(0)=0\] 
		and $j^*$ is injective on $H^6(G/T\times_\Gamma T^n, G/T\times_\Gamma A)$ as $H^5(G/T\times_\Gamma A, \mathbb{Z})=0$ and by exactness $\text{ker }j^*=\text{im }d=0$. So we have $\text{ch}(f_if_j)=\text{ch}(f_i)\text{ch}(f_j)$. By similar reasonings we also have 
		\[\text{ch}(f_ir_*(w_kx_{j_1j_2}\cdots x_{j_{2\ell-1}j_{2\ell}}))=\text{ch}(f_i)\text{ch}(r_*(w_kx_{j_1j_2}\cdots x_{j_{2\ell-1}j_{2\ell}}))=0\]
		for $\ell\geq 0$.
		\item\label{alpha} Again by Theorem \ref{mainthm}, $f_ir_*(\alpha)=0$. While $\text{ch}(f_i)\text{ch}(r_*(\alpha))$ is a 2-torsion as $f_i$ is, it lies in $H^k(Y_n, A)\cong H^k(Y_n, \mathbb{Z})$ for $k$ odd and $k\geq 5$, which is torsion-free as shown below. So we have
		\[\text{ch}(f_ir_*(\alpha))=\text{ch}(f_i)\text{ch}(r_*(\alpha))=0.\]
		\item\label{alphaw} If $z_1$ and $z_2$ are of the form $\alpha$ or $w_ix_{j_1j_2}\cdots x_{j_{2k-1}j_{2k}}$ for $k\geq 1$, then 
		\begin{align*}
			\text{ch}(r_*(z_1)r_*(z_2))&=\text{ch}(r_*(z_1z_2))\ (r_*\text{ is a homomorphism})\\
								&=(j^*)^{-1}(\text{ch}(z_1z_2))\ ((j^*)^{-1}\text{ is well-defined for ch}(z_1z_2))\\
								&=(j^*)^{-1}(\text{ch}(z_1)\text{ch}(z_2))\ (\text{by Proposition \ref{ringiso}})\\ 
								&=(j^*)^{-1}(\text{ch}(z_1))(j^*)^{-1}(\text{ch}(z_2))\ (j^*\text{ is a homomorphism})\\
								&=\text{ch}(r_*(z_1))\text{ch}(r_*(z_2))
		\end{align*}
		\item For product of the form $r_*(w_i)r_*(z)$ where $z=\alpha$ or $w_ix_{j_1j_2}\cdots x_{j_{2k-1}j_{2k}}$ for $k\geq 0$, its Chern character is $(j^*)^{-1}(\text{ch}(w_i)\text{ch}(z))$ by the same arguments as in item \label{alphaw} above, and it lies in the product of preimages $(j^*)^{-1}(\text{ch}(w_i))(j^*)^{-1}(\text{ch}(z))$. While $(j^*)^{-1}(\text{ch}(z))$ is well-defined, $(j^*)^{-1}(\text{ch}(w_i))$ is equal to $\text{ch}(r_*(w_i))$ up to the subgroup generated by $\text{ch}(f_1)$, $\text{ch}(f_2)$, $\cdots$, $\text{ch}(f_{2^n-1-n-\binom{n}{2}})$. However, $(j^*)^{-1}(\text{ch}(w_i))(j^*)^{-1}(\text{ch}(z))$ in fact is well-defined because 
		\[\text{ch}(f_i)(j^*)^{-1}(\text{ch}(z))=\text{ch}(f_i)\text{ch}(r_*(z))=0\] 
		by items \ref{fij} and \ref{alpha} above, and it is equal to $\text{ch}(r_*(w_i))\text{ch}(r_*(z))$. It follows that 
		\[\text{ch}(r_*(w_i)r_*(z))=\text{ch}(r_*(w_i))\text{ch}(r_*(z)).\]
	\end{enumerate}
	The second claim about the cohomology groups follows from applying the Chern character map to the additive generators of $K^*(Y_n)$ exhibited in Theorem \ref{mainthm}. To be more precise, 
	\begin{itemize}
		\item $H^2(Y_n, \mathbb{Z})$ is generated by the $\binom{n}{2}$ free generators $\{\text{ch}(r_*(x_{ij}))|1\leq i<j\leq n\}$, 
		\item $H^3(Y_n, \mathbb{Z})$ is generated by the $n$ free generators $\{\text{ch}(r_*(w_i))|1\leq i\leq n\}$ and the $2^n-1-n-\binom{n}{2}$ 2-torsion generators $\{\text{ch}(f_i)|1\leq i\leq 2^n-1-n-\binom{n}{2}\}$, 
		\item for $2k\geq 4$, $H^{2k}(Y_n, \mathbb{Z})$ is generated by the $\binom{n}{2k}$ free generators 
		\[\{\text{ch}(r_*(x_{i_1i_2}\cdots x_{i_{2k-1}i_{2k}}))|1\leq i_1<i_2<\cdots<i_{2k}\leq n\},\] 
		the $\binom{n}{2k-2}$ 2-torsion generators 
		\[\{\text{ch}(r_*(ux_{i_1i_2}\cdots x_{i_{2k-4}i_{2k-2}}))| 1\leq i_1<i_2<\cdots<i_{2k-2}\leq n\},\] 
		and the $\binom{n}{2k-1}$ 2-torsion generators 
		\[\{\text{ch}(r_*(v_{i_1}x_{i_2i_3}\cdots x_{i_{2k-2}i_{2k-1}}))|1\leq i_1<i_2<\cdots<i_{2k-1}\leq n\},\] and
		\item for $2k+1\geq 5$, $H^{2k+1}(Y_n, \mathbb{Z})$ is generated by the $\binom{n}{2k-1}$ free generators 
		\[\{\text{ch}(r_*(w_{i_1}x_{i_2i_3}\cdots x_{i_{2k-2}i_{2k-1}}))|1\leq i_1<i_2<\cdots<i_{2k-1}\leq n\}.\]
	\end{itemize}
\end{proof}
\begin{remark}
	It can be shown that the cohomology groups in Corollary \ref{cohring} agree with those deduced from the homotopic decomposition of the suspension $\Sigma Y_n$ given by \cite[Equation (18)]{BJS}. In particular, it is immediate that when $n=3$, our result agrees with that explicitly computed at the end of \cite{BJS}.
\end{remark}
\begin{remark}
	When $n=2$, Theorem \ref{mainthm} agrees with the ordinary $K$-theory ring structure deduced from the $G$-equivariant $K$-theory ring structure given in \cite{Ba}. According to \cite[Equation 5.28]{Ba}, there is the following isomorphism of $R(G)$-algebras
	\[K_G^*(Y_n)\cong R(G)[x_1, x_2, x_3, x_4]/(vx_1-2x_2, x_ix_j\text{ for all }i\text{ and }j), \]
	where $v$ is the standard representation of $G$. Applying the augmentation map to the RHS, we have
	\[\mathbb{Z}[\overline{x}_1, \overline{x}_2, \overline{x}_3, \overline{x}_4]/(2(\overline{x}_1-\overline{x}_2), \overline{x}_i\overline{x}_j\text{ for all }i\text{ and }j),\]
	which indeed is isomorphic to $K^*(Y_2)$ given in Theorem \ref{mainthm}: here the isomorphism sends $\overline{x}_1$ to $r_*(2x_{12})$, $\overline{x}_2$ to $r_*((2+u)x_{12})$, $\overline{x}_3$ to $r_*(w_1)$, and $\overline{x}_4$ to $r_*(w_2)$.\footnote{We believe there are typos on the RHS of the isomorphism in \cite[Equation 5.28]{Ba}. As the LHS is the $G$-equivariant $K$-theory, the RHS should be a $R(G)$-algebra instead of a $R(T)$-algebra. Besides there should be four algebra generators instead of five, because it is mentioned immediately before \cite[Equation 5.27]{Ba} that the author denotes `a generator for each of the four summand in the module structure by $x_i$'.}\end{remark}

\section{Cohomology of $\text{Hom}(\mathbb{Z}^n, SU(2))$ as an FI-module}
	The moduli space $Y_n$ can be endowed with the natural $S_n$-action which permutes the $n$ commuting tuples, making its cohomology group (with coefficient field $\mathbb{F}$) an $S_n$-representation. Moreover, the various cohomology groups for different $n$ are connected by the maps
	\[\varphi_n^*: H^*(Y_n, \mathbb{F})\to H^*(Y_{n+1}, \mathbb{F})\]
	induced by the natural projection onto the first $n$ tuples, which are compatible with the representations of permutation groups in a suitable sense. All these make $H^*(Y_n, \mathbb{F})$ an FI-module over $S_n$ (for definition see \cite{CEF}). It is natural to wonder if the cohomology group is (uniformly) representation stable, i.e., if the decomposition of $H^*(Y_n, \mathbb{F})$ into irreducible representations of $S_n$ stabilizes as $n$ goes to infinity (see \cite{CF} for definition of representation stability). The cohomology $H^i(Y_n, \mathbb{C})$ is uniformly representation stable because, as an $S_n$-representation, it is isomorphic to $\bigwedge\nolimits^i V_{\text{std}}\oplus \bigwedge\nolimits^{i-1}V_{\text{std}}$ when $i$ is even, and $\bigwedge^{i-2}V_{\text{std}}\oplus \bigwedge^{i-3}V_{\text{std}}$ when $i$ is odd (cf. \cite[Section 5.1]{B}). Here $V_{\text{std}}$ stands for the $(n-1)$-dimensional standard representation of $S_n$. It follows that $H^i(Y_n, \mathbb{C})$ is a finitely generated FI-module (cf. \cite[Theorem 1.14]{CEF}). When the coefficient field is $\mathbb{Z}_2$, the FI-module structure behaves very differently.
	\begin{corollary}\label{nonfgfimod}
	The cohomology group $H^*(Y_n, \mathbb{Z}_2)$ is not a finitely generated FI-module.
\end{corollary}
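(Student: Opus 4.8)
The plan is to exhibit a single cohomological degree in which the $\mathbb{Z}_2$-Betti numbers of $Y_n$ grow exponentially in $n$, and then invoke the fact that a finitely generated FI-module over a field has dimensions bounded by a polynomial. Since the grading is preserved both by the maps $\varphi_n^*$ and by the $S_n$-action, for each fixed $i$ the assignment $[n]\mapsto H^i(Y_n, \mathbb{Z}_2)$ is itself an FI-module, and the graded object $H^*(Y_\bullet, \mathbb{Z}_2)$ splits as the direct sum of these degreewise FI-modules. In particular, if $H^*(Y_\bullet, \mathbb{Z}_2)$ were finitely generated, then a finite generating set would lie in finitely many cohomological degrees and, degree being preserved, each summand $H^i(Y_\bullet, \mathbb{Z}_2)$ would be finitely generated. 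Hence it suffices to find one $i$ for which $H^i(Y_\bullet, \mathbb{Z}_2)$ fails to be finitely generated.

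First I would record the dimension-growth obstruction. If an FI-module $V$ over a field is generated in degrees $\leq d$, then every element of $V_n$ is a linear combination of the images $f_*(v)$ of generators $v\in V_m$ ($m\leq d$) under the injections $f\colon [m]\hookrightarrow [n]$; since there are exactly $n(n-1)\cdots(n-m+1)$ such injections, one gets the bound
\[
\dim_{\mathbb{Z}_2} V_n \leq \sum_{m=0}^{d} (\dim_{\mathbb{Z}_2} V_m)\,\tfrac{n!}{(n-m)!},
\]
a polynomial in $n$ of degree $d$ (each $V_m = H^i(Y_m,\mathbb{Z}_2)$ being finite-dimensional). Thus any finitely generated FI-module over $\mathbb{Z}_2$ has polynomially bounded dimensions, and it is enough to produce a degree in which the dimension outgrows every polynomial. (Alternatively, one may cite the eventual polynomiality of $\dim V_n$ for finitely generated FI-modules over Noetherian coefficients from \cite{CEFN}.)

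Next I would compute $\dim_{\mathbb{Z}_2} H^2(Y_n, \mathbb{Z}_2)$ from the integral cohomology of Corollary \ref{cohring}. The short exact coefficient sequence $0\to\mathbb{Z}\xrightarrow{2}\mathbb{Z}\to\mathbb{Z}_2\to 0$ induces the Bockstein long exact sequence, from which one extracts
\[
0 \longrightarrow H^i(Y_n, \mathbb{Z})/2 \longrightarrow H^i(Y_n, \mathbb{Z}_2) \longrightarrow {}_2 H^{i+1}(Y_n, \mathbb{Z}) \longrightarrow 0,
\]
where ${}_2(-)$ denotes the $2$-torsion subgroup. Taking $i=2$ and inserting $H^2(Y_n, \mathbb{Z})=\mathbb{Z}^{\binom{n}{2}}$ and $H^3(Y_n, \mathbb{Z})=\mathbb{Z}^n\oplus\mathbb{Z}_2^{\,2^n-1-n-\binom{n}{2}}$ gives
\[
\dim_{\mathbb{Z}_2} H^2(Y_n, \mathbb{Z}_2) = \binom{n}{2} + \left(2^n - 1 - n - \binom{n}{2}\right) = 2^n - 1 - n.
\]

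Finally, since $2^n-1-n$ grows faster than any polynomial in $n$, the FI-module $H^2(Y_\bullet, \mathbb{Z}_2)$ cannot be finitely generated by the bound of the second paragraph, and therefore neither is $H^*(Y_\bullet, \mathbb{Z}_2)$. The only genuine input is the exponentially large $2$-torsion of $H^3(Y_n, \mathbb{Z})$ isolated in Corollary \ref{cohring}, which the Bockstein sequence transfers to an exponential $\mathbb{Z}_2$-dimension in degree $2$; the one point to be careful about is that the polynomial-growth criterion must hold in characteristic $2$, which is exactly why the elementary counting bound above (valid over any field) is preferable to quoting the characteristic-zero statement \cite[Theorem 1.14]{CEF} used for the complex coefficients.
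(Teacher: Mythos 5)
Your proposal is correct and follows essentially the same route as the paper: both transfer the exponentially large $2$-torsion of $H^3(Y_n,\mathbb{Z})$ from Corollary \ref{cohring} into mod-$2$ cohomology via the coefficient sequence, and then contradict the polynomial dimension growth of finitely generated FI-modules over a field (the paper cites \cite[Theorem B]{CEFN}, which is valid over $\mathbb{Z}_2$, so your characteristic-$2$ caution is already handled there). Your only deviations are cosmetic: you work in degree $2$ rather than degree $3$, you make explicit the reduction from the graded module $H^*$ to a single degree, and you supply a self-contained counting bound in place of the citation.
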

\begin{proof}
	By Corollary \ref{cohring} and the Universal Coefficient Theorem, $\text{dim}_{\mathbb{Z}_2}H^3(Y_n, \mathbb{Z}_2)$ grows exponentially instead of being a polynomial of $n$ eventually. By \cite[Theorem B]{CEFN}, if an FI-module $\{V_n\}$ over any field $\mathbb{F}$ is finitely generated, then $\text{dim }_{\mathbb{F}}V_n$ is a polynomial of $n$ for sufficiently large $n$. Hence $H^3(Y_n, \mathbb{Z}_2)$ is not a finitely generated FI-module.
\end{proof}



\noindent\footnotesize{\textsc{Xi'an Jiaotong-Liverpool University,\\
111 Ren’ai Road, Suzhou Industrial
Park,\\Suzhou, Jiangsu Province 215123, China}\\
\\
\textsc{E-mail}: \texttt{ChiKwong.Fok@xjtlu.edu.cn}\\
\textsc{URL}: \texttt{https://sites.google.com/site/alexckfok}

\end{document}